  \newcommand\ma[1]{{\color{black} #1}}
 \newcommand\ibt[1]{{\color{black} #1}}
\let\oldtocsection=\tocsection
\let\oldtocsubsubsection=\tocsubsubsection
\renewcommand{\tocsection}[2]{\vspace{0.5em}\hspace{0em}\oldtocsection{#1}{#2}}
\renewcommand{\tocsubsubsection}[2]{\vspace{0.5em}\hspace{2em}\oldtocsubsubsection{#1}{#2}}
\DeclareMathOperator{\divv}{div}
\newtheorem{theoreme}{Theorem}
\newtheorem{rem}[theoreme]{Remark}
\theoremstyle{definition}
\numberwithin{equation}{section}
 \renewenvironment{proof}{{\bfseries \noindent Proof.}}{\demo}
\newcommand\xqed[1]{%
  \leavevmode\unskip\penalty9999 \hbox{}\nobreak\hfill
  \quad\hbox{#1}}
\newcommand\demo{\xqed{$\square$}}
\def\R{\mathbb R}
\def\la {{\lambda}}
\newcommand {\nc}   {\newcommand}
\nc {\be}   {\begin{equation}} \nc {\ee}   {\end{equation}} \nc
\nc {\eeq}  {\end{eqnarray}} \nc {\beqs}
\nc {\eeqs} {\end{eqnarray*}}
\def\edc{\end{document}}
\providecommand{\abs}[1]{\lvert#1\rvert}%absolute value
\newcounter{dummy} 
\numberwithin{dummy}{section}
\newtheorem{Theorem}[dummy]{Theorem}
\newtheorem{example}[dummy]{Example}
\newtheorem{Lemma}[dummy]{Lemma}
\newtheorem{Proposition}[dummy]{Proposition}
\newtheorem{Remark}[dummy]{Remark}
\newtheorem{Hypothesis}[dummy]{Hypothesis}
\numberwithin{equation}{section}
\begin{document}
\title[\fontsize{7}{9}\selectfont  ]{Energy decay rate of a transmission system governed by degenerate wave equation with drift and under heat conduction with memory effect}
\author{Mohammad Akil$^1$ , Genni Fragnelli$^2$, Ibtissam Issa$^3$}

\address{$^1$ Universit\'e Polytechnique Hauts-de-France, C\'ERAMATHS/DEMAV, Le Mont Houy 59313 Valenciennes Cedex 9-France.}

\address{$^2$ Department of Ecology and Biology, Tuscia University, Largo dell’Universit\'a, 01100 Viterbo - Italy}
\address{$^3$ Universit\'a  degli studi di Bari Aldo Moro-Italy, Dipartimento di Matematica, Via E. Orabona 4, 70125 Bari - Italy}

\email{mohammad.akil@uphf.fr}
\email{genni.fragnelli@unitus.it}
\email{ibtissam.issa@uniba.it}

\keywords{Degenerate wave equation, drift, heat conduction with memory effect,  polynomial stability, exponential stability.}

\begin{abstract}
In this paper, we investigate the stabilization of transmission problem of degenerate wave equation and heat equation under Coleman-Gurtin heat conduction law or Gurtin-Pipkin law with memory effect. We investigate the polynomial stability of this system when employing the Coleman-Gurtin heat conduction,  establishing a decay rate of type $t^{-4}$. Next, we demonstrate exponential stability in the case when Gurtin-Pipkin heat conduction is applied.
\end{abstract}

\maketitle
\pagenumbering{roman}
\maketitle
\tableofcontents

\pagenumbering{arabic}
\setcounter{page}{1}

\section{\textbf{Introduction}}

\vspace{0.3cm}

In the fields of science and engineering, it is common to encounter numerous models that involve the coupling of heat equations and wave equations.  Apart from the underlying mathematical attraction, the primary driving force for researching these systems is their potential applications. In the study conducted in \cite{EdwardNaghdi1991} in 1991, a thermoelasticity model of type II is proposed. This model offers a framework for understanding and analyzing thermoelastic phenomena.  The models under consideration have been extensively examined in previous years. For a comprehensive analysis of the asymptotic behavior of these systems,  see \cite{ZhongZuazua2017} and the relevant literature cited therein.  However, as mentioned in \cite{Biot1941},  in 1941, the phenomenon of an earthquake is mathematically represented by a three-dimensional nonlinear coupled heat-wave partial differential equation (PDE). In the study conducted in \cite{DiegoYury2022}, a simplified one-dimensional representation of an earthquake model is examined. The authors propose the utilization of a sliding mode controller to mitigate instabilities, specifically to prevent the occurrence of earthquakes.  Fluid-structure interaction can also be represented mathematically as a coupled system of such a type.  Such system can be viewed as linearisations of more complex fluid-structure models arising in fluid mechanics; for an in-depth understanding see for instance  \cite{AvalosTriggiani2007, ZhangZuazua2006}.   In thermoelasticity, the stability of a system is important for preventing thermal buckling  where a solid structure collapses due to thermal expansion and to prevent deformations or failures in structures subjected to both thermal and mechanical loads. In \cite{Filippo2023DCDS} the authors worked  to produce a family of equations describing the evolution of the temperature in a rigid heat conductor by using means of successive approximations of the Fourier law, via memory relaxations and integral perturbations.  In \cite{Pata2010}, the authors introduce a novel method for mathematically analyzing equations with memory that is based on the idea of a state that is, the initial configuration of the system that can be clearly identified by knowing the dynamics of the future. They use the abstract form of an equation resulting from linear viscoelasticity as a model.

\vspace{0.3cm}

In \cite{BattyDavid2016}, in 2016, the authors considered the wave-heat system with finite wave and heat parts coupled through the boundary with Neumann boundary for the wave part. They show that the optimal energy decay rate of type $t^{-4}$.   A similar result is obtained in \cite{ZhangZuazua2004} for a similar one-dimensional problem in which the wave part satisfies a Dirichlet boundary condition rather than a Neumann boundary condition at the endpoint of the wave. A work that considered extending the heat part of the coupled wave-heat system is studied in 2020, in \cite{David}. The authors establish a sharper rate ($t^{-2}$) which shows that extending the heat part to infinity slows the rate of energy decay by a factor of $t^2$. The crucial difference between the infinite and the finite systems is that the damping provided by the heat part is significantly weaker in the infinite case.  
\\

The classical linear heat equation is conventionally derived from Fourier's law, the constitutive equation, and the first law of thermodynamics. However, this conventional theory exhibits two primary shortcomings. Firstly, it does not incorporate memory effects, which are evident in certain materials. Secondly, it postulates that all thermal perturbations at a specific location within the material are instantaneously transmitted throughout the entire body, implying that all disturbances propagate at an infinite velocity. The study of heat conduction under diverse non-Fourier heat flux laws has been evolving since the 1940s. Let's denote the heat flux vector as $\mathbf{q}$.  As per the Gurtin-Pipkin theory \cite{Gurtin1968}, the linearized constitutive equation for $\mathbf{q}$ is established as follows:
\begin{equation}\label{GP}
\mathbf{q}(t)=-\int_0^{\infty}g(s)\theta_x(t-s)ds,
\end{equation}
where $g$ is the heat conductivity relaxation kernel. The  presence of convolution term in \eqref{GP} entails finite propagation speed of heat conduction, and consequently, the equation is of hyperbolic type.  Note that \eqref{GP} reduces to the classical Fourier law when $g$ is the Dirac mass at zero. Furthermore, if we take $g$ as a prototype kernel
$g(t) = e^{-kt}, k > 0,$ and differentiate \eqref{GP} with respect to $t$, we can (formally) arrive at the so-called Cattaneo–Fourier law  \cite{Cattaneo}
\begin{equation}\label{Cattaneo–Fourier}
\mathbf{q}_t(t)+k\mathbf{q}=-\theta_x(t).
\end{equation}
On the other hand, when the heat conduction is due to the Coleman–Gurtin theory \cite{Coleman1967}, the heat flux $\mathbf{q}$ depends on both the past history and the instantaneous of the gradient of temperature:
\begin{equation}\label{CG}
\mathbf{q}(t)=-\beta\theta_x(x,t)-\int_0^{\infty}g(s)\theta_x(x,t-s)ds,
\end{equation}
where $\beta>0$ is the instantaneous diffusivity coefficient. \\
Both the Coleman-Gurtin and Pipkin-Gurtin heat conduction laws incorporate memory effects into heat transfer equations. The Coleman-Gurtin model introduces memory effects through a time derivative term, providing a nuanced description of how materials retain thermal memory, particularly valuable in scenarios with rapid temperature changes and non-classical heat conduction. While, the Pipkin-Gurtin model employs fractional calculus to describe memory effects, making it well-suited for materials with complex structures or fractal-like properties, allowing it to capture non-local heat conduction and extended long-range interactions. For example,  thermoacoustic devices like engines and refrigerators involve the interaction of acoustic waves and heat transfer. 
\\

In 2014, in \cite{Zhang-ZAMP} Zhang studies the stability of an interaction system comprised of a wave equation and a heat equation with memory.  An exponential stability of the interaction system is obtained when the hereditary heat conduction is of Gurtin–Pipkin type and she showed the lack of uniform decay of the interaction system when the heat conduction law is of Coleman–Gurtin type (see also \cite{Pata2001} in 2001 where the model considered uses a Gurtin-Pipkin linearized heat flux rule to match the energy balance with a nonlinear time-dependent heat source).  In \cite{DellOro2020}, the author studies the stability of Bresse and Timoshenko systems with hyperbolic heat conduction. As a  first step, the Bresse–Gurtin–Pipkin system is studied, providing a necessary and sufficient condition for the exponential stability and the optimal polynomial decay rate when the condition is violated. As a second step, in  \cite{DellOro2020}, the Timoshenko–Gurtin–Pipkin system is considered and the optimal polynomial decay rate is found.
Later, in \cite{Filipo2023}, the authors study the asymptotic behavior of solutions of a one-dimensional coupled wave-heat system with Coleman-Gurtin thermal law. They prove an optimal polynomial decay rate of type $t^{-2}$.  In \cite{Akil2022} the author studies the stability of a system of piezoelectric beams under (Coleman or Pipkin)–Gurtin
thermal laws with magnetic effect. In particular, as a first step, the author explores the piezoelectric Coleman-Gurtin system, achieving exponential stability; as a second step, he considers the piezoelectric Gurtin–Pipkin system, establishing a polynomial energy decay rate of type $t^{-2}$ .\\
%\vspace{0.3cm}
As far as our understanding goes, there appears to be no prior research on the examination of a system that combines a degenerate wave equation and a heat equation with memory. In pursuit of this objective, this paper explores this system, examining various scenarios involving different heat conduction types. 
%%%%
Several problems arising in Physics and Biology (see \cite{Karachalios2005}), Biology (see \cite{Boutaayamou2020,  Fragnelli+2020}), as well as Mathematical Finance (see \cite{Hagan}), are governed by degenerate parabolic equations.
The existing literature focused on controlling and stabilizing the nondegenerate wave equation using diverse damping methods is notably extensive. This fact can be observed in the substantial number of works cited, as exemplified by \cite{CHENTOUF, Conrad1993, dAndraNovel1994} and the references mentioned within.\\
Lately, the subject of controllability and stability in degenerate hyperbolic equations has gained significant attention, with various advancements made in recent years.  In this field the most important paper is \cite{Cannarsa} (see also the arxiv version of 2015), where a general degenerate function is considered (see  also \cite{Gueye},  \cite{Zhang2015}, and the references mentioned within).
 In \cite{Alabau2006}, the authors establish a Carleman estimate for the one-dimensional degenerate heat equation and investigate the null controllability of the semilinear degenerate parabolic equation over the interval $[0, 1]$. Meanwhile, in the work described in \cite{Fragnelli2016}, the authors formulate Carleman estimates for singular/degenerate parabolic Dirichlet problems, taking into account degeneracy and singularity within the spatial domain's interior (see also \cite{Fragenllibook}).\\
Recently, in \cite{fragnelli2022linear},  the authors consider  a degenerate wave equation in one dimension, with drift and in presence of a leading degenerate operator which is in non-divergence form with homogeneous Dirichlet boundary condition where the degeneracy occurs and a boundary damping is considered at the other endpoint.  In particular, they prove uniform exponential decay under some conditions for the  solutions. Later, a boundary controllability problem for a similar system is considered in \cite{Boutaayamou2023}. In particular, the authors study the controllability of the system  by providing some conditions for the boundary controllability of the solution of the associated Cauchy problem at a sufficiently large time. Recently in 2023, in \cite{akil2023stability} the authors consider two problems; the first one a one-dimensional degenerate wave equation with degenerate damping, incorporating a drift term and a leading operator in non-divergence form. In the second problem they consider a system that couples degenerate and non-degenerate wave equations, connected through transmission and subject to a single dissipation law at the boundary of the non-degenerate equation. In both scenarios, they reached exponential stability results. Recently, in \cite{ZhongjieJDE},  a system consisting of one wave equation and one degenerate heat equation in two connected regions is considered where the coupling is through certain transmission conditions. The authors consider two cases, the first is on rectangular domain where the system is given as follows
\begin{equation}
\left\{\begin{array}{ll}
u_{tt}(x,y,t)-\Delta u(x,y,t)=0,& (x,y) \in \Omega_1, t>0,\\
w_t(x,y,t)-\divv{(a\nabla w)}(x,y,t)=0,& (x,y) \in \Omega_2, t>0,\\
u(-1,y,t)=w(1,y,t)=0&y\in (0,1),  t>0,\\
u(x,0,t)=u(x,1,t)=0&x\in (-1,0), t>0,\\
w(x,0,t)=u(x,1,t)=0&x\in (0,1), t>0,\\
u(x,y,0)=u_0(x,y),u_t(x,y,0)=v_0(x,y), &(x,y) \in \Omega_1, t>0,\\
w(x,y,0)=w_0(x,y), u_t(x,0)=v_0(x),&(x,y) \in \Omega_2, t>0,
\end{array}\right.
\end{equation}
with the transmission conditions at the interface line $\Gamma$ : $x = 0$, given by
\[u_t(0,y,t)=w(0,y,t), u_x(0,y,t)=(aw_x)(0,y,t),\quad y\in (0,1) \text{ and }  t>0.\]
Here the variable heat-diffusion coefficient $a(x, y)$ degenerates near the interface line $x = 0$, as follows: $a(x,y)=mx^{\alpha}, x\in(0,1]$ with constants $0\leq \alpha<1, m>0$ and where $\Omega_1=(-1,0)\times (0,1)$ and $\Omega_2=(0,1)\times (0,1)$. 
In this case,  the authors establish an explicit polynomial decay rate of type $t^{\frac{-1}{3-\alpha}}$ for the solutions to the system. This decay rate depends
only on the degeneration degree of the diffusion coefficient of the heat equation near the
interface line. The second case is the one-dimensional case where they reached polynomial decay rate with rate $t^{-\frac{2-\alpha}{1-\alpha}}$.\\
This paper examines a one-dimensional coupled system wherein a wave equation and a heat equation with memory are interconnected via transmission conditions established at the interface.  Our investigation will focus on the system's stability in various cases of heat conduction, which are dependent on the parameter $m$.  The system is given as following
\begin{equation}\label{Eq1}\tag{DW-H$_{m}$}
\left\{\begin{array}{lll}
u_{tt}-a(x)u_{xx}-b(x)u_{x}=0,& (x,t) \in (0,1)\times\R^+_{\ast},\\[0.1in]\displaystyle
y_t-c(1-m)y_{xx}-cm\int_{0}^{\infty}g(s)y_{xx}(x,t-s)ds=0,& (x,t) \in (1,2)\times\R^+_{\ast},\\[0.1in]
u_t(1,t)=y(1,t), & t\in \R^+_{\ast},\\[0.1in]\displaystyle
\eta(1) u_x(1,t)=c(1-m)y_x(1,t)+cm\int_{0}^{\infty}g(s)y_{x}(1,t-s)ds, & t\in \R^+_{\ast},\\[0.1in]
u(0,t)=y(2,t)=0, & t\in \R^+_{\ast},\\[0.1in]
u(x,0)=u_0(x), u_t(x,0)=v_0(x),& x\in (0,1),\\[0.1in]
y(x,0)=y_0(x), y(x,-s)=\varphi_0(x,s),  & x\in (1,2), s>0.
\end{array}\right.
\end{equation}
Here $a,b\in C^0([0,1])$, with $a>0$ on $(0,1]$, $a(0)=0$ and $\frac{b}{a}\in L^1(0,1)$,  $u_0$, $v_0$, $y_0$ are assigned data, $c$ is a strictly positive constant and $\varphi_0$ accounts for the so-called initial past history of $y$. The convolution kernel $g :[0,\infty[\to [0,\infty)$ is a convex integrable function (thus non-increasing and vanishing at infinity) of unit total mass, taking the explicit form
$$ g(s)=\int_s^{\infty}\mu(r)dr, s\geq 0,  $$
where the memory kernel $\mu : (0,\infty)\to [0,\infty)$ satisfies the following conditions
\begin{equation*}\label{H}\tag{H}
\left\{
\begin{array}{ll}
\displaystyle
\mu\in L^1(0,\infty)\cap C^1(0,\infty)\quad\text{with}\quad \int_0^{\infty}\mu(r)dr=g(0)>0, \, \mu(0)=\lim_{s\to 0}\mu(s)<\infty,\\
\displaystyle
\mu \quad \text{satisfies the Dafermos condition}\quad \mu^{\prime}(s)\leq -K_{\mu}\,\mu(s); \quad K_{\mu}>0.
\end{array}\right.
\end{equation*}
Also, let us recall the well-known absolutely continuous weight function
$$\eta(x):=\exp\left\lbrace\int _{\frac{1}{2}}^{x}\dfrac{b(s)}{a(s)}ds\right\rbrace, \quad x\in [0,1]$$
introduced by Feller in a related context \cite{Feller}.  We set the function $\sigma(x):=\dfrac{a(x)}{\eta(x)},$ which is a continuous function in $[0, 1]$, independent of the possible degeneracy of $a$. Moreover, observe that if $u$ is a sufficiently smooth function, e.g. $u\in W^{2,1}_{loc}(0,1)$, then we can write $Bu := au_{xx} + bu_x$
as
$$Bu=\sigma(\eta u_x)_x.$$
The degeneracy at $x = 0$ is measured by the parameter $K$ defined by
\begin{equation}\label{Degeneracy}
K_a:=\sup _{x\in (0,1]}\dfrac{x\abs{a^{\prime}(x)}}{a(x)}.
\end{equation}
We say that $a$ is weakly degenerate at $0$, (WD) for short, if 
\begin{equation}\label{WD}\tag{WD}
a\in C^0[0,1]\cap C^1(0,1] \quad  \text{and} \quad K_a\in (0,1)
\end{equation}
(for example $a(x)=x^K$, with $K \in (0,1)$)
and we say that $a$ is strongly degenerate at $0$, (SD) for short, if 
\begin{equation}\label{SD}\tag{SD}
a\in C^1[0,1] \quad \text{and}\quad K_a\in [1,2)
\end{equation}
(for example $a(x)=x^K$, with $K \in [1,2)$).
Here we assume $K_a<2$ since it is essential in the calculation that will be conducted below. \\

\noindent In the model \eqref{Eq1}, $m \in (0, 1)$ is a fixed parameter and the temperatures obey the parabolic hyperbolic law introduced by Coleman and Gurtin in \cite{Coleman1967}. For the boundary cases for $m$:\\
$\bullet$ For $m=0$: The system corresponds to Fourier's law.\\
%\begin{equation}\label{systemFourier}
%\left\{\begin{array}{lll}u_{tt}-a(x)u_{xx}-b(x)u_{x}=0,& (x,t) \in (0,1)\times\R^+_{\ast},\\[0.1in]\displaystyle
%y_t-cy_{xx}=0,& (x,t) \in (1,2)\times\R^+_{\ast},\\[0.1in]
%u_t(1,t)=y(1,t), &t\in \R^+_{\ast},\\[0.1in]\displaystyle
%\eta(1) u_x(1,t)=cy_x(1,t), &t\in \R^+_{\ast},\\[0.1in]
%u(0,t)=0, y(2,t)=0, &t\in \R^+_{\ast}.
%\end{array}\right.
%\end{equation}
$\bullet$ For  $m=1$: The system corresponds to Gurtin-Pipkin heat conduction law.\\
The main novelty in this paper is the consideration of the transmission problem of degenerate wave equation and under heat conduction (Coleman or Pipkin)-Gurtin or Fourier law.  Since we investigate the degenerate wave equation in this work, so we can approach the system considered in \cite{Zhang-ZAMP} if we choose specific functions $a(x)=1$ and $b(x)=0$.  Additionally, this study extends the findings presented in \cite{Filipo2023} by examining the degenerate wave equation as opposed to the classical wave equation, while also incorporating the three thermal laws mentioned. The optimality of the decay rate in the context of the Coleman-Gurtin or Fourier law is established when choosing $a(x)=1$ and $b(x)=0$ (as demonstrated in \cite{Filipo2023}). However, for the system described in \eqref{Eq1}, we propose that the energy decay rate is optimal. \\
%\begin{equation}\label{systemPipkin}
%\left\{\begin{array}{lll}
%u_{tt}-a(x)u_{xx}-b(x)u_{x}=0,& (x,t) \in (0,1)\times\R^+_{\ast},\\[0.1in]\displaystyle
%y_t-c\int_{0}^{\infty}g(s)y_{xx}(x,t-s)ds=0,& (x,t) \in (1,2)\times\R^+_{\ast},\\[0.1in]
%u_t(1,t)=y(1,t), &t\in \R^+_{\ast},\\[0.1in]\displaystyle
%\eta(1) u_x(1,t)=c\int_{0}^{\infty}g(s)y_{x}(1,t-s)ds, &t\in \R^+_{\ast},\\[0.1in]
%u(0,t)=0, y(2,t)=0, &t\in \R^+_{\ast}.
%\end{array}\right.
%\end{equation}
This paper is organized as follows: in the first section we give some preliminary results and we establish the well posedness of the system  \eqref{Eq1} if $m \in [0,1]$ using a semigroup approach,  after re-framing the system into an evolution system.  In Section \ref{Strong Stability}, we investigate the stability of the system under (Coleman or Pipkin)-Gurtin thermal law in the case $m \in [0,1]$. Next, in Section \ref{SectionPoly}, we establish a polynomial stability under Coleman-Gurtin heat conduction law with decay rate of type $t^{-4}$ if $m \in [0,1)$. Section \ref{SectionExp}, we prove the exponential stability of the system under Gurtin-Pipkin heat conduction law with the parameter $m=1$. The paper ends with a section devoted to recall some results needed for the proofs.

\section{\textbf{Preliminaries, Functional spaces and Well-Posedness}}\label{Section-Well Posedness}
\begin{Hypothesis}\label{hyp1}
Functions $a$ and $b$ are continuous in $[0,1]$ and such that $\dfrac{b}{a}\in L^1(0,1)$.
\end{Hypothesis}

\begin{Hypothesis}\label{hyp3}
Hypothesis \ref{hyp1} holds.  In addition, $a$ is such that $a(0)=0, a>0$ on $(0,1]$ and there exists $\alpha>0$ such that the function 
\begin{equation}\label{rq}
x\rightarrow\dfrac{x^{\alpha}}{a(x)}
\end{equation}
is non-decreasing in a right neighborhood of $x=0$.
\end{Hypothesis}

\begin{rem}
\begin{enumerate}\normalfont
\item If $a$ is (WD) or (SD), then \eqref{rq} holds for all $\alpha\geq K_a$ and for all $x\in (0,1)$.
\item We notice that, at this stage, a may not degenerate at $x = 0$.  However, if it is (WD) then $\dfrac{1}{a}\in L^1(0,1)$ and the assumption $\dfrac{b}{a}\in L^1(0,1)$ is always satisfied.  If $a$ is (SD) then $\dfrac{1}{a}\notin  L^1(0,1)$,  hence, if we want $\dfrac{b}{a}\in L^1(0, 1)$  then $b$ has to degenerate at $0$. In this case $b$ can be (WD) or (SD).
\end{enumerate}
\end{rem}
We start by introducing the following spaces.
$$L_{\frac{1}{\sigma}}^2(0,1):=\left\lbrace y\in L^2(0,1); \|y\|_{\frac{1}{\sigma}}<\infty\right\rbrace,  \quad\langle y,z\rangle_{\frac{1}{\sigma}}:=\int _0^1 \dfrac{1}{\sigma}y\bar{z}dx,\quad \text{for every} \quad y,z\in L_{\frac{1}{\sigma}}^2(0,1),$$
$$\ma{H_{\frac{1}{\sigma}}^1(0,1)}:=L_{\frac{1}{\sigma}}^2(0,1)\cap \ma{H^1(0,1)}, \quad \langle y,z\rangle_{1}:=\langle y,z\rangle_{\frac{1}{\sigma}}+\int _0^1\eta y_x\bar{z}_xdx,\quad \text{for every}  \quad y,z\in H_{\frac{1}{\sigma}}^1(0,1), $$
$$H^1_L(0,1)=\left\{u\in H^1(0,1); u(0)=0 \right\}, \quad \left<y,z\right>_{H^1_L(0,1)}=\left<y_x,z_x\right>_{L^2(0,1)}\quad \text{for every}  \quad y,z\in H_{L}^1(0,1),$$
and 
$$H_{\frac{1}{\sigma}}^2(0,1):=\left\lbrace y\in  H_{\frac{1}{\sigma}}^1(0,1); By\in L_{\frac{1}{\sigma}}^2(0,1) \right\rbrace\quad \langle y,z\rangle_{2}:=\langle y,z\rangle_{1}+\langle By,Bz\rangle_{\frac{1}{\sigma}}.$$
The previous inner products induce related respective norms
$$\|u\|^2_{\frac{1}{\sigma}}=\int_0^1 \frac{1}{\sigma}|u|^2dx, \quad \|u\|_{1}^2=\|u\|^2_\frac{1}{\sigma}+\int_0^1 \eta |u_x|^2dx\quad \text{and}\quad \|u\|_{2}^2=\|u\|^2_{1}+\int_0^1 \sigma |(\eta u_x)_x|^2dx.$$
Also, we consider the following spaces
$$H_{\frac{1}{\sigma},L}^1(0,1)= L_{\frac{1}{\sigma}}^2(0,1)\cap H^1_L(0,1)\quad\text{and}\quad H_{\frac{1}{\sigma},L}^2(0,1):=\left\{y\in H_{\frac{1}{\sigma},L}^1(0,1); By \in L_{\frac{1}{\sigma}}^2(0,1) \right\}$$
endowed with the previous inner products and related norms and we denote by $\|\cdot\|=\|\cdot\|_{L^2(0,1)}$. We also introduce
$$H^1_R(1,2):=\{y \in H^1(1,2): y(2)=0\}$$
endowed with the following norm
$$\|y\|^2_{H_R^1(1,2)}=\int_1^2|y_x|^2dx.$$
We also introduce the memory space $W$, defined by
$$W=L^2(\R^{+}, H^1_R(1,2))\quad \left<\gamma_1,\gamma_2\right>_W=cm\int_1^2\int_0^{\infty}\mu(s)\gamma_{1x}\overline{\gamma_{2x}}dsdx\quad \text{for all}  \quad \gamma_1,\gamma_2\in W.$$
Then,  we reformulate system \eqref{Eq1} using the definition of $\sigma$ and using the history framework of Dafermos. To this end, for $s > 0$, we consider the auxiliary function
 $$\gamma(x,s)=\int_0^{s}y(x,t-r)dr,\quad x\in (1,2)\,\text{and}\, s>0.
 $$
Now, system \eqref{Eq1} can be rewritten as 
\begin{equation}\label{system1}
\left\{\begin{array}{lll}
u_{tt}-\sigma(\eta u_{x})_x=0,& (x,t) \in (0,1)\times\R^+_{\ast},\\[0.1in]\displaystyle
y_t-c(1-m)y_{xx}-cm\int_{0}^{\infty}\mu(s)\gamma_{xx}(s)ds=0,& (x,t) \in (1,2)\times\R^+_{\ast},\\[0.1in]
\gamma_t+\gamma_s-y=0,& (x,t) \in (1,2)\times\R^+_{\ast},\\[0.1in]
u_t(1,t)=y(1,t), &t\in \R^+_{\ast},\\[0.1in]
\displaystyle
\eta(1) u_x(1,t)=c(1-m)y_x(1,t)+cm\int_{0}^{\infty}\mu(s)\gamma_{x}(1,s)ds, &t\in \R^+_{\ast},\\[0.1in]
u(0,t)=0, y(2,t)=0, &t\in \R^+_{\ast},\\[0.1in]
u(x,0)=u_0(x), u_t(x,0)=v_0(x),&x\in (0,1),\\[0.1in]
y(x,0)=y_0(x), y(x,-s)=\varphi_0(x,s),  & x\in (1,2), s>0.
\end{array}\right.
\end{equation}
Multiplying the first equation in \eqref{system1} by $\displaystyle\frac{1}{\sigma}\overline{u_t}$ and integrating over $(0,1)$, we get
\begin{equation}\label{e1}
\frac{1}{2}\frac{d}{dt} \left(\int_0^1 \frac{1}{\sigma}|u_t|^2dx+\int_0^1\eta |u_x|^2dx\right)-\Re\left(\eta(1)u_x(1)\overline{u_t}(1)\right)=0.
\end{equation}
Multiplying the second equation in \eqref{system1} by $\overline{y}$ and integrating over $(1,2)$, we obtain\begin{equation}\label{e2}
\begin{array}{l}
\displaystyle\frac{1}{2}\frac{d}{dt}\int_1^2|y|^2dx+c(1-m)\int_1^2|y_x|^2dx+\Re\left(c(1-m)y_x(1)\overline{y}(1)\right)+\Re\left(cm\int_1^2\int_0^\infty\mu(s)\gamma_x(s)\overline{y}_xdx\right)\\ 
\displaystyle
+\Re\left(cm\int_0^\infty\mu(s)\gamma_x(1,s)\overline{y}(1)ds\right)=0.
\end{array}
\end{equation}
Differentiating the third equation in \eqref{system1} with respect to $x$, we obtain
$$ \gamma_{xt}+\gamma_{xs}-y_x=0.$$
Multiplying the above equation by $cm\mu(s)\overline{\gamma}_x$, integrating over $(1,2)\times(0,\infty)$, we get
\begin{equation}\label{e3}
\begin{array}{ll}
\displaystyle
\frac{1}{2}\frac{d}{dt}cm\int_1^2\int_0^{\infty}\mu(s)|\gamma_x|^2dsdx-\frac{cm}{2}\int_1^2\int_0^{\infty}\mu^{\prime}(s)|\gamma_x|^2dsdx+\frac{cm}{2}\left[\int_1^2\mu(s)|\gamma_x|^2dx\right]_0^{\infty}\\
\displaystyle
=\Re\left(cm\int_1^2\int_0^\infty\mu(s)\overline{\gamma}_x(s){y}_xdx\right).
\end{array}
\end{equation}
Inserting \eqref{e3} in \eqref{e2}, we get
\begin{equation}\label{e4}
\begin{array}{l}
\displaystyle
\frac{1}{2}\frac{d}{dt}\int_1^2|y|^2dx+\frac{1}{2}\frac{d}{dt}cm\int_1^2\int_0^{\infty}\mu(s)|\gamma_x|^2dsdx+c(1-m)\int_1^2|y_x|^2dx-\frac{cm}{2}\int_1^2\int_0^{\infty}\mu^{\prime}(s)|\gamma_x|^2dsdx\\
\displaystyle
+\frac{cm}{2}\left[\int_1^2\mu(s)|\gamma_x|^2dx\right]_0^{\infty}+\Re\left(c(1-m)y_x(1)\overline{y}(1)\right)+\Re\left(cm\int_0^\infty\mu(s)\gamma_x(1,s)\overline{y}(1)ds\right)=0.
\end{array}
\end{equation}
Adding equations \eqref{e1} and \eqref{e4} and using the transmission conditions in \eqref{system1} and conditions \eqref{H},   we get
\begin{equation}
\begin{array}{l}
\displaystyle
\frac{1}{2}\frac{d}{dt} \left(\int_0^1 \frac{1}{\sigma}|u_t|^2dx+\int_0^1\eta |u_x|^2dx\right)+\frac{1}{2}\frac{d}{dt}\int_1^2|y|^2dx+\frac{1}{2}\frac{d}{dt}cm\int_1^2\int_0^{\infty}\mu(s)|\gamma_x|^2dsdx\\
\displaystyle
=
-c(1-m)\int_1^2|y_x|^2dx+\frac{cm}{2}\int_1^2\int_0^{\infty}\mu^{\prime}(s)|\gamma_x|^2dsdx.
\end{array}
\end{equation} 
Thus, the energy of the system \eqref{system1} can be written in the following form
\begin{equation}
\mathcal{E}(t)=\frac{1}{2}\int_0^1\left(\frac{1}{\sigma}|u_t|^2+\eta|u_x|^2\right)dx+\frac{1}{2}\int_1^2|y|^2dx+\frac{cm}{2}\int_1^2\int_0^{\infty}\mu(s)|\gamma_x|^2dsdx
\end{equation}
and
\begin{equation}
\frac{d}{dt}\mathcal{E}(t)=\frac{cm}{2}\int_1^2\int_0^{\infty}\mu^{\prime}(s)|\gamma_x|^2dsdx-c(1-m)\int_1^2|y_x|^2dx.
\end{equation}
Now, we define the Hilbert energy space by
$$\mathcal{H}=H^1_{\frac{1}{\sigma},L}(0,1)\times L^2_{\frac{1}{\sigma}}(0,1)\times L^2(1,2)\times W$$
equipped with the following inner product
$$\left<U_1,U_2\right>_{\mathcal{H}}=\int_0^1 \eta u_{1x}\overline{u_{2x}}dx+\int_0^1\frac{1}{\sigma}v_1\overline{v_2}dx+\int_1^2y_1\overline{y_2}dx+cm\int_1^2\int_0^{\infty}\mu(s)\gamma_{1x}\overline{\gamma_{2x}}dsdx,$$
where $U_i=(u_i, v_i, y_i,  \gamma_i)\in\mathcal{H}$, $i=1,2$. 
We denote by 
\begin{equation}\label{zeta_function}
\zeta=c(1-m)y+cm\int_0^{\infty}\mu(s)\gamma(s)ds.
\end{equation}
Defining the unbounded linear operator $\mathcal{A}$ by
$$\mathcal{A}(u, v, y, \gamma)^{\top}=(v, \sigma(\eta u_x)_x, \zeta_{xx}, -\gamma_s+y)^{\top};\quad \text{for all} \quad(u, v, y, \gamma)^{\top}\in D(\mathcal{A})$$
 and 
$$D(\mathcal{A})=\left\{
\begin{array}{lll}
(u, v, y, \gamma)\in\mathcal{H}; v\in H^1_{\frac{1}{\sigma},L}(0,1), \,  u\in H^2_{\frac{1}{\sigma},L}(0,1), \,  y\in H^1_R(1,2), \,  \zeta\in H^2(1,2), \vspace{0.2cm}\\
\gamma_s  \in W,  \quad\gamma(\cdot,0)=0, \quad \eta(1)u_x(1)=\zeta_x(1), \quad \text{and}\quad v(1)=y(1)
\end{array}\right\},
$$
we can rewrite \eqref{system1} as the following evolution equation
\begin{equation}\label{Cauchy}
U_t=\mathcal{A}U,\quad U(0)=U_0
\end{equation}
where $U_0=\left(u_0,v_0, y_0, \gamma_0\right)^{\top}$ with $\displaystyle\gamma_0=\int _0^s\varphi_0(x,r)dr$.\\
\begin{Proposition}\label{m-dissipative} Assume Hypotheses \ref{hyp1} and \ref{hyp3} and $m \in [0,1]$.  The unbounded linear operator $\mathcal{A}$ is m-dissipative in the energy space $\mathcal{H}$. 
\end{Proposition}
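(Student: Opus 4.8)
The plan is to establish the two defining properties of an m-dissipative operator: that $\mathcal{A}$ is dissipative, i.e. $\Re\langle\mathcal{A}U,U\rangle_{\mathcal{H}}\le 0$ for every $U\in D(\mathcal{A})$, and that it is maximal, which I will obtain by showing that $I-\mathcal{A}$ is surjective onto $\mathcal{H}$ (the choice $\lambda=1$ being convenient).

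For dissipativity, fix $U=(u,v,y,\gamma)^{\top}\in D(\mathcal{A})$. Writing out $\langle\mathcal{A}U,U\rangle_{\mathcal{H}}$ from the definition of the inner product and integrating by parts slot by slot, I would argue as follows. In the first two slots the terms $\int_0^1\eta v_x\overline{u_x}\,dx$ and $\int_0^1\frac1\sigma\sigma(\eta u_x)_x\overline{v}\,dx$ combine after one integration by parts (using $v(0)=0$ and the flux condition $\eta(1)u_x(1)=\zeta_x(1)$ together with $v(1)=y(1)$) into a purely imaginary quantity plus the boundary contribution $\zeta_x(1)\overline{y(1)}$. In the third slot, $\int_1^2\zeta_{xx}\overline{y}\,dx$ integrated by parts (using $y(2)=0$) produces $-\zeta_x(1)\overline{y(1)}$, which cancels the previous boundary term, together with $-\int_1^2\zeta_x\overline{y_x}\,dx$; inserting $\zeta_x=c(1-m)y_x+cm\int_0^\infty\mu(s)\gamma_x(s)\,ds$ this yields $-c(1-m)\int_1^2|y_x|^2\,dx$ plus a cross term in $\mu(s)\gamma_x\overline{y_x}$. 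In the fourth slot, after integrating by parts in $s$ and using $\gamma(\cdot,0)=0$ and $\mu(+\infty)=0$, I get $\frac{cm}{2}\int_1^2\int_0^\infty\mu'(s)|\gamma_x|^2\,ds\,dx$ plus a cross term that cancels the one from the third slot. Taking real parts I therefore expect exactly
\[
\Re\langle\mathcal{A}U,U\rangle_{\mathcal{H}}=-c(1-m)\int_1^2|y_x|^2\,dx+\frac{cm}{2}\int_1^2\int_0^\infty\mu'(s)|\gamma_x|^2\,ds\,dx,
\]
which is $\le 0$ since $c>0$, $m\in[0,1]$, and the Dafermos condition $\mu'\le -K_{\mu}\mu\le 0$ forces the second integral to be nonpositive.

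For maximality, given $F=(f_1,f_2,f_3,f_4)^{\top}\in\mathcal{H}$ I would solve $(I-\mathcal{A})U=F$ componentwise. The first component gives $v=u-f_1$, and the fourth is the transport equation $\gamma+\gamma_s-y=f_4$ with $\gamma(\cdot,0)=0$, whose unique solution $\gamma(x,s)=(1-e^{-s})y(x)+\int_0^s e^{-(s-r)}f_4(x,r)\,dr$ expresses $\gamma$ through the still-unknown $y$. Substituting into $\zeta=c(1-m)y+cm\int_0^\infty\mu(s)\gamma(s)\,ds$ collapses the $y$-dependence into the single positive constant $d:=c(1-m)+cm\int_0^\infty\mu(s)(1-e^{-s})\,ds>0$, so $\zeta=dy+cm\,G$ with $G$ a fixed function built from $f_4$. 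What remains is the coupled stationary system $u-\sigma(\eta u_x)_x=f_1+f_2$ on $(0,1)$ and $y-\zeta_{xx}=f_3$ on $(1,2)$, subject to $u(0)=0$, $y(2)=0$, the essential transmission condition $u(1)-y(1)=f_1(1)$ (from $v(1)=y(1)$), and the natural flux condition $\eta(1)u_x(1)=\zeta_x(1)$. After lifting away the inhomogeneous datum $f_1(1)$, I would place this in the variational framework on $V:=\{(u,y)\in H^1_{\frac1\sigma,L}(0,1)\times H^1_R(1,2):u(1)=y(1)\}$ with bilinear form $a((u,y),(\phi,\psi))=\int_0^1\frac1\sigma u\overline\phi\,dx+\int_0^1\eta u_x\overline{\phi_x}\,dx+\int_1^2 y\overline\psi\,dx+d\int_1^2 y_x\overline{\psi_x}\,dx$, where the boundary fluxes cancel precisely because test functions satisfy $\phi(1)=\psi(1)$. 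Since $d>0$, the form dominates the $V$-norm, so $a$ is coercive and Lax--Milgram yields a unique weak solution.

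The main obstacle is the bookkeeping of the maximality step rather than any single deep estimate. Three points require care: (i) checking that the constructed history $\gamma$ and its $s$-derivative lie in $W$ and that the data function $G$ lies in $H^1_R(1,2)$, which I would verify by Young-type convolution estimates exploiting $\mu\in L^1(0,\infty)$ together with the exponential kernel $e^{-(s-r)}$; (ii) recovering the interior regularity $u\in H^2_{\frac1\sigma,L}(0,1)$ and $\zeta\in H^2(1,2)$, where the degeneracy is in fact benign because the weak equation gives $\sigma(\eta u_x)_x=u-f_1-f_2\in L^2_{\frac1\sigma}(0,1)$ directly, i.e. $Bu\in L^2_{\frac1\sigma}$ by the very definition of the space; and (iii) recovering the natural transmission condition $\eta(1)u_x(1)=\zeta_x(1)$ by integrating the weak identity back against general elements of $V$. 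Collecting $v$, $y$, $\gamma$ then shows $U\in D(\mathcal{A})$ solves $(I-\mathcal{A})U=F$; as $F$ was arbitrary, $I-\mathcal{A}$ is surjective, which together with dissipativity is precisely m-dissipativity.
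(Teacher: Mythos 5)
Your proof is correct, and its overall architecture --- the same energy identity for dissipativity, then a coupled variational problem solved by Lax--Milgram for maximality --- matches the paper's; but your maximality step takes a genuinely different route. You verify the range condition at $\lambda=1$, solving $(I-\mathcal{A})U=F$ directly, whereas the paper solves $-\mathcal{A}U=F$ (thus proving $0\in\rho(\mathcal{A})$) and only then passes to $R(\lambda I-\mathcal{A})=\mathcal{H}$ for small $\lambda>0$ via openness of the resolvent set (citing \cite{Kato01}) and Theorems 4.5--4.6 of \cite{Pazy01}. The choice of resolvent parameter has concrete consequences. First, the history equation: the paper's $\gamma_s-y=f_4$ integrates to $\gamma(\cdot,s)=s\,y+\int_0^s f_4(\cdot,\tau)\,d\tau$, which grows linearly in $s$, so verifying $\gamma\in W$ requires the paper's delicate truncated integration-by-parts argument resting on the Dafermos condition; your equation $\gamma+\gamma_s-y=f_4$ has the bounded solution $\gamma=(1-e^{-s})y+\int_0^s e^{-(s-r)}f_4(\cdot,r)\,dr$, for which membership in $W$ follows from an elementary bound (by \eqref{H} the kernel $\mu$ is non-increasing, so $\int_r^{\infty}\mu(s)e^{-(s-r)}\,ds\le\mu(r)$, and Cauchy--Schwarz closes the estimate); this is the analogue of Proposition \ref{prop_1} with $\Re\,\xi=1$ instead of $\Re\,\xi=0$, and it is easier, not harder. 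Second, the trace condition $v(1)=y(1)$: in the paper $v=-f_1$ is already known, so the condition becomes the Dirichlet datum $y(1)=-f_1(1)$, which the paper's variational formulation leaves implicit (it would need a lifting just as yours does); in your setting it becomes the inhomogeneous coupling $u(1)-y(1)=f_1(1)$, which you handle explicitly by lifting (the simplest admissible lift being $f_1$ itself) and then applying Lax--Milgram on the constrained space $V$, where the zeroth-order terms of your form give coercivity immediately with constant $\min\{1,d\}$, $d>0$ being your collapsed memory coefficient. What the paper's route buys is the statement $0\in\rho(\mathcal{A})$ itself, which it re-uses at the start of the strong-stability section; your route does not yield that, but it reaches the definition of m-dissipativity without the resolvent-openness detour and with lighter technicalities on the two points above. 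Both routes are complete proofs of the proposition.
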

\begin{proof}
For all $U = (u, v, y, \gamma)^{\top}\in D(\mathcal{A})$, using condition \eqref{H}  and the fact that $m\in[0,1]$,  we get
$$\Re(\left<\mathcal{A}U,U\right>_{\mathcal{H}})=\frac{cm}{2}\int_1^2\int_0^{\infty}\mu^{\prime}(s)|\gamma_x|^2dsdx-c(1-m)\int_1^2|y_x|^2dx\leq 0,
$$
which implies that $\mathcal{A}$ is dissipative. Now, we need to prove that $\mathcal{A}$ is maximal.  For this aim, let $F =(f_1, f_2, f_3, f_4(\cdot,s))^{\top}\in \mathcal{H}$, we need to find $U = (u, v, y, \gamma)^{\top}\in D(\mathcal{A})$ unique solution of
\begin{equation}\label{maximal}
-\mathcal{A}U=F.
\end{equation}
Equivalently, we have the following system
\begin{eqnarray}
-v&=&f_1, \label{e11}\\
-\sigma(\eta u_x)_x&=&f_2, \label{e12}\\
-\zeta_{xx}&=&f_3, \label{e13}\\
\gamma_s-y &=&f_4(\cdot,s).\label{e14}
\end{eqnarray}
From \eqref{e11}, we have $v=-f_1$, thus $v\in H^1_{\frac{1}{\sigma}, L}(0,1)$.
From \eqref{e14}, we get
\begin{equation}\label{gamma}
\gamma(\cdot, s)=sy(x)+\int_0^s f_4(x,\tau)d\tau.
\end{equation}
Using \eqref{gamma} and the definition of $\zeta$, we get
\begin{equation}\label{zeta}
\zeta=c(1-m)y+cmy\int_0^{\infty} s\mu(s)ds+cm\int_0^{\infty}\mu(s)\int_0^s f_4(x,\tau)d\tau ds.
\end{equation}
Let  $\varphi_1 \in H^1_{\frac{1}{\sigma},L}(0,1)$ and $\varphi_2\in  H^1_R(1,2)$ such that $\varphi_1(1)=\varphi_2(1)$. Multiplying \eqref{e12} by $\displaystyle\frac{1}{\sigma}\overline{\varphi_1}$ and \eqref{e13} by $\overline{\varphi_2}$ and integrating by parts over $(0,1)$ and $(1,2)$, respectively, we get
\begin{equation}\label{add}
\int_0^1\eta u_x\overline{\varphi_{1x}}dx+\int_1^2\zeta_x\overline{\varphi_{2x}}dx=\int_0^1\frac{1}{\sigma} f_2\overline{\varphi_1}dx+\int_1^2f_3\overline{\varphi_2}dx.
\end{equation}
Using \eqref{zeta} and  \eqref{add}, we get
\begin{equation}\label{unique}
\mathcal{B}((u,\zeta),(\varphi_1,\varphi_2))=\mathcal{L}(\varphi_1,\varphi_2)\quad\text{for all}\quad (\varphi_1,\varphi_2)\in H^1_{\frac{1}{\sigma},L}(0,1)\times H^1_R(1,2),
\end{equation}
where 
\begin{equation*}
\mathcal{B}((u,\zeta),(\varphi_1,\varphi_2))=\int_0^1\eta u_x\overline{\varphi_{1x}}dx+\left(c(1-m)+cm\int_0^s s\mu(s)ds\right)\int_1^2 y_x\overline{\varphi_{2x}}dx,
\end{equation*}
and 
\begin{equation*}
\mathcal{L}(\varphi_1,\varphi_2)=\int_0^1\frac{1}{\sigma} f_2\overline{\varphi_1}dx+\int_1^2f_3\overline{\varphi_2}dx+cm\int_1^2\left(\int_0^{\infty}\mu(s)\int_0^sf_{4x}(x,\tau)d\tau ds\right) \overline{\varphi_{2x}}dx.
\end{equation*}
From \eqref{H},  we have $\displaystyle c(1-m)+cm\int_0^s s\mu(s)ds>0$; moreover, using Proposition \ref{prop_1} in the Appendix and the fact that $f_4\in W$, we get that $\displaystyle\int_0^sf_4(x,\tau)d\tau\in W$. Thus,  $\mathcal{B}$ is a sesquilinear, continuous and coercive form on $( H^1_{\frac{1}{\sigma},L}(0,1)\times H^1_R(1,2))^2$ and $\mathcal{L}$ is a linear and continuous form on  $H^1_{\frac{1}{\sigma},L}(0,1)\times H^1_R(1,2)$. using Lax-Milgram, \eqref{unique} admits a unique solution $(u,y)\in H^1_{\frac{1}{\sigma},L}(0,1)\times H^1_R(1,2)$. So,  using the fact that $f_2\in L^2_{\frac{1}{\sigma}}(0,1)$ we get $u\in H^2_{\frac{1}{\sigma},L}(0,1) $.  Since $f_4(\cdot,s)\in W$ and $y\in H^1_R(1,2)$, $\gamma_s(\cdot,s)\in W$. Now, in order to obtain that $\gamma(\cdot,s)\in W$, it is sufficient to prove that $\displaystyle \int_0^{\infty}\mu(s)\|\gamma_x(\cdot,s)\|^2_{L^2(1,2)}ds< \infty$. For this aim, let $\varepsilon_1, \varepsilon_2>0$, using Hypothesis \eqref{H}, we have
\begin{equation}\label{p1-}
\int_{\varepsilon_1}^{\varepsilon_2}\mu(s)\|\gamma_x(\cdot,s)\|^2_{L^2(1,2)}ds\leq \frac{-1}{K_{\mu}}\int_{\varepsilon_1}^{\varepsilon_2}\mu^{\prime}(s)\|\gamma_x(\cdot,s)\|^2_{L^2(1,2)}ds.
\end{equation}
Using integration by parts in \eqref{p1-}, we obtain
\begin{equation}\label{p2-}
\begin{array}{ll}
\displaystyle
\int_{\varepsilon_1}^{\varepsilon_2}\mu(s)\|\gamma_x(\cdot,s)\|^2_{L^2(1,2)}ds\leq \frac{1}{K_{\mu}}\int_{\varepsilon_1}^{\varepsilon_2}\mu(s)\frac{d}{ds}\left(\|\gamma_x(\cdot,s)\|^2_{L^2(1,2)}\right)ds\\
\displaystyle
+\frac{1}{K_{\mu}}\left(\mu(\varepsilon_1)\|\gamma_x(\cdot,\varepsilon_1)\|^2_{L^2(1,2)}-\mu(\varepsilon_2)\|\gamma_x(\cdot,\varepsilon_2)\|^2_{L^2(1,2)}\right).
\end{array}
\end{equation}
Now, from Young's inequality we have
\begin{equation*}
\begin{split}
 \frac{1}{K_{\mu}}\int_{\varepsilon_1}^{\varepsilon_2}\mu(s)\frac{d}{ds}\left(\|\gamma_x(\cdot,s)\|^2_{L^2(1,2)}\right)ds &=\frac{2}{K_{\mu}} \int_{\varepsilon_1}^{\varepsilon_2}\mu(s)\Re\left(\int_1^2\gamma_x(\cdot,s)\overline{\gamma_{sx}}(\cdot,s)dx\right)ds\\
& \leq \frac{1}{2}\int_{\varepsilon_1}^{\varepsilon_2}\mu(s)\|\gamma_x(\cdot,s)\|^2_{L^2(1,2)}ds+\frac{2}{K_{\mu}^2}\int_{\varepsilon_1}^{\varepsilon_2}\mu(s)\|\gamma_{sx}(\cdot,s)\|^2_{L^2(1,2)}ds.
\end{split}
\end{equation*}
Inserting the above inequality in \eqref{p2-}, we obtain
\begin{equation*}
\int_{\varepsilon_1}^{\varepsilon_2}\mu(s)\|\gamma_x(\cdot,s)\|^2_{L^2(1,2)}ds\leq \frac{4}{K_{\mu}^2}\int_{\varepsilon_1}^{\varepsilon_2}\mu(s)\|\gamma_{sx}(\cdot,s)\|^2_{L^2(1,2)}ds+\frac{2}{K_{\mu}}\mu(\varepsilon_1)\|\gamma_x(\cdot,\varepsilon_1)\|^2_{L^2(1,2)}-\frac{2}{K_{\mu}}\mu(\varepsilon_2)\|\gamma_x(\cdot,\varepsilon_2)\|^2_{L^2(1,2)}.
\end{equation*}
Taking the above inequality as $\varepsilon_1\to 0^{+}$ and $\varepsilon_2\to\infty$, and using the fact that $\gamma_s(\cdot,s)\in W$, $\gamma(\cdot,0)=0$ and condition \eqref{H}, we obtain
$$\int_0^{\infty}\mu(s)\|\gamma_x(\cdot,s)\|^2_{L^2(1,2)}ds< \infty,
$$
i.e., $\gamma(\cdot,s)\in W$. Hence,  $U\in D(\mathcal{A})$ and it is the unique solution of \eqref{maximal}. Then, $\mathcal{A}$ is an isomorphism. Moeover, using the fact that  $\rho(\mathcal{A})$ is open set of $\mathbb{C}$ (see Theorem 6.7 (Chapter III) in \cite{Kato01}), we easily get $R(\lambda I-\mathcal{A})=\mathcal{H}$ for a sufficiently small $\lambda>0$. This, together with the dissipativeness of $\mathcal{A}$, imply that $D(\mathcal{A})$ is dense in $\mathcal{H}$ and $\mathcal{A}$ is m-dissipative in $\mathcal{H}$ (see Theorem 4.5, 4.6 in \cite{Pazy01}). The proof is thus complete.  
%And,  for the last term in $\mathcal{L}$, using Cauchy-Schwarz inequality and the fact that $f_4(\cdot,s)\in W$ we get
%\begin{equation*}
%\begin{array}{ll}
%\displaystyle

%\int_1^2\left(\int_0^{\infty}\mu(s)\int_0^sf_{4x}(x,\tau)d\tau ds\right) \overline{\varphi_{2x}}dx\leq \left(\int_0^{\infty}\mu(s)ds \right)^{\frac{1}{2}} \int_1^2\left(\int_0^{\infty}\mu(s)\left(\int_0^s f_{4x}(x,\tau)d\tau\right)^2ds\right)^{\frac{1}{2}}\overline{\varphi_{2x}}dx\\\displaystyle
%\leq \sqrt{g(0)} \left( \int_1^2\int_0^{\infty}\mu(s)\underbrace{\left(\int_0^s f_{4x}(x,\tau)d\tau\right)^2}_{\mathcal{G}(x,s)} dsdx\right)^{\frac{1}{2}} \left(\int_1^2 |\varphi_{2x}|^2dx\right)^{\frac{1}{2}}
%\displaystyle
%\leq  \sqrt{g(0)}\underbrace{ \left( \int_1^2\int_0^{\infty}\mu(s)\mathcal{G}(x,s)dsdx\right)}_{<\infty} \|\varphi_{2x}\|_{L^2(1,2)}
%\end{array}\end{equation*}
\end{proof}
\section{\textbf{Strong Stability}}\label{Strong Stability}
The aim of this subsection is to prove the strong stability of \eqref{system1}.  First, we denote the following  
\begin{equation}\label{M}
M_1=\left\|x\left(\frac{a^{\prime}-b}{a}\right)\right\|_{L^{\infty}(0,1)}\quad  \text{and} \quad M_2=\left\|x\frac{b}{a}\right\|_{L^{\infty}(0,1)}.
\end{equation}

\begin{Hypothesis}\label{Condition1}
Assume Hypothesis \ref{hyp1}, $a$  \eqref{WD} or \eqref{SD} and the functions $a$ and $b$ such that 
$$\displaystyle M_1<1+\frac{K_a}{2}\quad\text{ and}\quad \displaystyle M_2<1-\frac{K_a}{2}.$$
\end{Hypothesis}
\begin{example}
Let $a(x)=x^{\mu_1}$ and $b(x)=c_bx^{\mu_2}$, such that $c_b\in\R^{\ast}$. For Hypothesis \ref{Condition1} to be attained, we need to have $\mu_1-\mu_2<1$ and $|c_b|<1-\frac{\mu_1}{2}$. 
\end{example}
\noindent The main result of this section is the following theorem.
\begin{Theorem}\label{Strong1}
Let $m\in [0,1]$ and assume condition \eqref{H} and Hypothesis \ref{Condition1}.  Then, the $C_0$-semigroup of contractions $(e^{t\mathcal{A}})_{t\geq 0}$ is strongly stable in $\mathcal{H}$, i.e., for all $U_0\in \mathcal{H}$, the solution of  \eqref{Cauchy} satisfies $\displaystyle\mathcal{E}(t)\xrightarrow[t\to\infty] {} 0$.
\end{Theorem}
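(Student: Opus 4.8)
The plan is to apply the classical Arendt--Batty criterion for strong stability of a bounded $C_0$-semigroup: since $(e^{t\mathcal{A}})_{t\geq0}$ is a semigroup of contractions by Proposition~\ref{m-dissipative}, it suffices to prove that $i\R\subset\rho(\mathcal{A})$, i.e.\ that $\mathcal{A}$ has no spectrum on the imaginary axis. I would split this into showing (i) $\mathcal{A}$ admits no purely imaginary eigenvalue, and (ii) $i\lambda I-\mathcal{A}$ is surjective for every $\lambda\in\R$; together with the dissipativity these give $i\lambda\in\rho(\mathcal{A})$ for all $\lambda$. Note that $0\in\rho(\mathcal{A})$ is already contained in Proposition~\ref{m-dissipative}, where $\mathcal{A}$ was shown to be an isomorphism, so only $\lambda\neq0$ needs attention.

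For (i), suppose $\mathcal{A}U=i\lambda U$ with $U=(u,v,y,\gamma)^{\top}\in D(\mathcal{A})$ and $\lambda\in\R^{\ast}$. Taking the real part of $\langle\mathcal{A}U,U\rangle_{\mathcal{H}}=i\lambda\|U\|_{\mathcal{H}}^2$ and recalling the dissipation identity yields
\[
\frac{cm}{2}\int_1^2\int_0^{\infty}\mu'(s)|\gamma_x|^2\,ds\,dx-c(1-m)\int_1^2|y_x|^2\,dx=0.
\]
Because $\mu'\leq-K_{\mu}\mu<0$ by the Dafermos condition in \eqref{H} and $c(1-m)\geq0$, the two non-positive contributions must vanish separately. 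For $m\in(0,1)$ this forces $\gamma_x\equiv0$ and $y_x\equiv0$; for $m=0$ it forces $y_x\equiv0$; for $m=1$ it forces $\gamma_x\equiv0$. In each case, using $y(2)=0$, $\gamma(2,s)=0$ (membership in $H^1_R(1,2)$) and the transport relation $-\gamma_s+y=i\lambda\gamma$, one obtains $y\equiv0$ and $\gamma\equiv0$ on $(1,2)$, hence $\zeta\equiv0$ by \eqref{zeta_function}. The transmission conditions $v(1)=y(1)$ and $\eta(1)u_x(1)=\zeta_x(1)$ then give $u(1)=0$ (using $v=i\lambda u$ and $\lambda\neq0$) and $u_x(1)=0$, while the first component of the eigenvalue equation reduces to $\sigma(\eta u_x)_x+\lambda^2u=0$ on $(0,1)$. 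This is a second-order ODE with regular coefficients away from $x=0$, and the overdetermined Cauchy data $u(1)=u_x(1)=0$ force $u\equiv0$ on every interval $(\delta,1]$ by the uniqueness theorem for the Cauchy problem; letting $\delta\to0^{+}$ and using continuity gives $u\equiv0$ on $[0,1]$, whence $v=i\lambda u\equiv0$ and $U=0$.

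For (ii), given $F=(f_1,f_2,f_3,f_4)^{\top}\in\mathcal{H}$ I would solve $(i\lambda I-\mathcal{A})U=F$. The component equation $i\lambda\gamma+\gamma_s-y=f_4$ together with $\gamma(\cdot,0)=0$ is integrated explicitly to express $\gamma$ in terms of $y$ and $f_4$, reducing the remaining equations to a coupled elliptic system for $(u,y)$. A sesquilinear form on $H^1_{\frac{1}{\sigma},L}(0,1)\times H^1_R(1,2)$, built exactly as in the proof of Proposition~\ref{m-dissipative}, is continuous and coercive, so Lax--Milgram provides a unique weak solution, elliptic regularity then promotes it to $D(\mathcal{A})$, and the membership $\gamma\in W$ is recovered through the same Dafermos-based estimate \eqref{p1-}--\eqref{p2-} used there. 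Combined with the injectivity from (i), this yields bijectivity of $i\lambda I-\mathcal{A}$ and hence $i\R\subset\rho(\mathcal{A})$; the Arendt--Batty theorem then delivers strong stability.

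The main obstacle will be twofold. First, in step (i) the propagation of the zero Cauchy data $u(1)=u_x(1)=0$ down to the degenerate endpoint $x=0$ must be argued carefully, since the operator $\sigma(\eta u_x)_x$ loses ellipticity there; the argument works only because the vanishing is propagated from the regular endpoint $x=1$ and passed to the limit $\delta\to0^{+}$. Second, in step (ii) the infinite-dimensional history variable $\gamma$ rules out any quick compactness argument for the resolvent, so surjectivity on the whole imaginary axis must be established by hand, the delicate point being the verification that the constructed $\gamma$ belongs to the weighted memory space $W$, which again rests on the Dafermos condition.
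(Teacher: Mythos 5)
Your step (i) ruling out purely imaginary \emph{eigenvalues} is sound (and pleasantly elementary: once dissipation kills $\gamma$ and $y$, the transmission conditions give the overdetermined data $u(1)=u_x(1)=0$, and ODE uniqueness on $[\delta,1]$ with $\delta\to0^+$ forces $u\equiv0$). The genuine gap is in step (ii). For $\la\neq0$ the reduced variational problem for $(u,y)$ is \emph{not} coercive: substituting $v=i\la u-f^1$ into the second equation produces the term $-\la^2\int_0^1\frac{1}{\sigma}|u|^2dx$ alongside $\int_0^1\eta|u_x|^2dx$, and the transmission condition becomes $y(1)=i\la u(1)-f^1(1)$, a $\la$-dependent coupling. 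The sesquilinear form ``built exactly as in the proof of Proposition \ref{m-dissipative}'' is coercive only because that proposition solves $-\mathcal{A}U=F$, i.e.\ the case $\la=0$; for $\la\neq 0$ (and in particular for large $|\la|$) neither its real nor its imaginary part dominates the norm, so Lax--Milgram simply does not apply. Since, as you yourself note, the history variable $\gamma$ rules out compactness of the resolvent (hence Fredholm arguments), your proposal is left with no mechanism at all to exclude \emph{approximate} spectrum on $i\R$ --- and for an operator whose spectrum lies in the closed left half-plane, any point of $\sigma(\mathcal{A})\cap i\R$ is a boundary point of the spectrum and therefore an approximate eigenvalue. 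Ruling out point spectrum plus an unproved surjectivity claim does not give $i\R\subset\rho(\mathcal{A})$.

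This is exactly the part the paper's proof is designed to handle: Proposition \ref{proposition2} argues by contradiction with a normalized sequence $U^n$ and reals $\la_n\to\omega$ satisfying $(i\la_nI-\mathcal{A})U^n=F_n\to0$, and then Lemmas \ref{lemma1} and \ref{lemma2} show $\|U^n\|_{\mathcal{H}}\to0$. The quantitative work there --- the dissipation estimates (including the separate argument for $m=1$), the Gagliardo--Nirenberg boundary estimates giving $|\la_nu^n(1)|\to0$ and $|u^n_x(1)|\to0$, and above all the multiplier $-2\frac{x}{\sigma}\overline{u_x^n}+\frac{K_a}{2\sigma}\overline{u^n}$ combined with Hypothesis \ref{Condition1} (the conditions $M_1<1+\frac{K_a}{2}$, $M_2<1-\frac{K_a}{2}$) --- is precisely what replaces the failed coercivity: it yields smallness of the wave part when the right-hand side merely tends to zero, a situation where your ODE-uniqueness trick has no analogue. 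A telling symptom of the gap is that your proof never invokes Hypothesis \ref{Condition1} at all, although it appears in the statement of the theorem; it is needed exactly in the approximate-spectrum estimate your argument omits.
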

\noindent According to Theorem of Arendt-Batty \cite{Arendt-Batty}, to prove Theorem \ref{Strong1}, we need to prove that the operator $\mathcal{A}$ has no pure imaginary eigenvalues and $\sigma(\mathcal{A})\cap i\R$ is countable. The proof of Theorem \ref{Strong1} will be established based on the following proposition.
\begin{Proposition}\label{proposition2}
Let $m \in [0, 1]$ and assume condition \eqref{H} and Hypothesis \ref{Condition1}, we have
\begin{equation}\label{pro1}
i\R\subset \rho(\mathcal{A}).
\end{equation}
\end{Proposition}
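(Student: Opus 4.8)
The plan is to use that $0\in\rho(\mathcal{A})$ already follows from Proposition \ref{m-dissipative}, so that only $\lambda\in\R^{\ast}$ has to be treated, and to proceed in two stages: first exclude purely imaginary eigenvalues, then prove surjectivity of $i\lambda I-\mathcal{A}$, which together give $i\lambda\in\rho(\mathcal{A})$.

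For the first stage, suppose $U=(u,v,y,\gamma)^{\top}\in D(\mathcal{A})$ satisfies $\mathcal{A}U=i\lambda U$ with $\lambda\neq 0$. Taking the real part of $\left<\mathcal{A}U,U\right>_{\mathcal{H}}=i\lambda\|U\|_{\mathcal{H}}^2$ and inserting the dissipation identity together with condition \eqref{H} forces $\frac{cm}{2}\int_1^2\int_0^{\infty}\mu^{\prime}(s)|\gamma_x|^2\,ds\,dx=0$ and $c(1-m)\int_1^2|y_x|^2\,dx=0$. Since $\mu^{\prime}\le -K_{\mu}\mu<0$, this gives $\gamma_x\equiv 0$ whenever $m>0$ and $y_x\equiv 0$ whenever $m<1$; the boundary conditions $\gamma(2,\cdot)=0$ and $y(2)=0$ then upgrade these to $\gamma\equiv 0$ and $y\equiv 0$. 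Reading the remaining component equations $v=i\lambda u$, $\zeta_{xx}=i\lambda y$, and $-\gamma_s+y=i\lambda\gamma$ with $\gamma(\cdot,0)=0$, I would propagate these vanishings so that in every case $m\in[0,1]$ one obtains $\gamma\equiv 0$, $y\equiv 0$ and $\zeta\equiv 0$. The wave component then solves $\sigma(\eta u_x)_x=-\lambda^2 u$ on $(0,1)$ with $u(0)=0$, while the transmission relations $v(1)=y(1)=0$ and $\eta(1)u_x(1)=\zeta_x(1)=0$ supply the overdetermined Cauchy data $u(1)=u_x(1)=0$ at the nondegenerate endpoint $x=1$. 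Because $u\mapsto\sigma(\eta u_x)_x$ is a regular second-order operator on each $[\delta,1]$, uniqueness for the associated linear ODE integrated from $x=1$ forces $u\equiv 0$ on $(0,1]$, hence $U=0$; thus $i\lambda$ is not an eigenvalue, and crucially this argument bypasses any unique-continuation issue at the degenerate point $x=0$.

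For the second stage, given $F=(f_1,f_2,f_3,f_4)^{\top}\in\mathcal{H}$ I would solve $(i\lambda I-\mathcal{A})U=F$ by first eliminating the two components that do not require elliptic regularity: $i\lambda u-v=f_1$ determines $v$ from $u$, and the transport equation $i\lambda\gamma+\gamma_s-y=f_4$ with $\gamma(\cdot,0)=0$ determines $\gamma$ explicitly from $y$ and $f_4$ (here $\lambda\neq 0$ is used). Substituting into $\zeta$ yields $\zeta=\kappa(\lambda)\,y+(\text{data})$ with $\kappa(\lambda)=c(1-m)+cm\int_0^{\infty}\mu(s)\frac{1-e^{-i\lambda s}}{i\lambda}\,ds$, well defined since $\mu\in L^1(0,\infty)$. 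What remains is a coupled problem for $(u,y)\in H^1_{\frac{1}{\sigma},L}(0,1)\times H^1_R(1,2)$ of the form $\mathcal{B}_{\lambda}((u,y),(\varphi_1,\varphi_2))=\mathcal{L}_{\lambda}(\varphi_1,\varphi_2)$, whose principal part coincides with the coercive form $\mathcal{B}$ from Proposition \ref{m-dissipative} while the extra contributions (the $-\lambda^2$ term from the wave equation and the $i\lambda$ mass terms) are of lower order. I would close this by the Fredholm alternative: the compact embeddings $H^1_{\frac{1}{\sigma},L}(0,1)\hookrightarrow L^2_{\frac{1}{\sigma}}(0,1)$ and $H^1_R(1,2)\hookrightarrow L^2(1,2)$ render those lower-order terms compact perturbations of the invertible principal part, so surjectivity is equivalent to injectivity, and injectivity is exactly the eigenvalue-free statement from the first stage. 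Recovering $v$ and $\gamma$ from $(u,y)$ then furnishes $U\in D(\mathcal{A})$ with $(i\lambda I-\mathcal{A})U=F$.

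The main obstacle I anticipate lies in the surjectivity step rather than the eigenvalue step. Because the memory space $W$ does not embed compactly, a direct compact-resolvent argument is unavailable, so one must eliminate the infinite-dimensional history variable $\gamma$ cleanly and verify that the reduced two-field form is genuinely a compact perturbation of the coercive form $\mathcal{B}$. This hinges on the compactness of the degenerate weighted embedding $H^1_{\frac{1}{\sigma},L}(0,1)\hookrightarrow L^2_{\frac{1}{\sigma}}(0,1)$, whose verification is delicate in the strongly degenerate regime $K_a\in[1,2)$ and must be justified under Hypothesis \ref{Condition1}; checking that $\Re\kappa(\lambda)>0$, so that no sign defect spoils the coercivity of the principal part, is the other point requiring care.
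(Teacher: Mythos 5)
Your Stage 1 (exclusion of imaginary eigenvalues) is correct, and in fact cleaner than anything in the paper at that level: propagating the dissipation identity to $\gamma\equiv 0$, $y\equiv 0$, $\zeta\equiv 0$, reading off the Cauchy data $u(1)=u_x(1)=0$ from the transmission conditions, and invoking ODE uniqueness on $[\delta,1]$ for every $\delta>0$ is sound, bypasses the degenerate point, and does not even use the inequalities $M_1<1+\frac{K_a}{2}$, $M_2<1-\frac{K_a}{2}$ of Hypothesis \ref{Condition1}. The genuine gap is in Stage 2, and it is not either of the two obstacles you flagged (both can be settled: compactness of $H^1_{\frac{1}{\sigma},L}(0,1)\hookrightarrow L^2_{\frac{1}{\sigma}}(0,1)$ follows from $a(x)\geq a(1)x^{K_a}$ with $K_a<2$ plus Hardy's inequality near $0$ and Rellich away from $0$; and $\Re\kappa(\lambda)=c(1-m)+\frac{cm}{\lambda}\int_0^\infty\mu(s)\sin(\lambda s)\,ds>0$ because the sine transform of a non-increasing, non-negative, integrable kernel is non-negative, strictly so under the Dafermos condition). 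The gap is the treatment of the transmission conditions. The flux condition $\eta(1)u_x(1)=\zeta_x(1)$ is natural and is encoded by testing against pairs with $\varphi_1(1)=\varphi_2(1)$, but the kinematic condition $v(1)=y(1)$ becomes, after eliminating $v$, the \emph{essential}, inhomogeneous and $\lambda$-dependent constraint $y(1)=i\lambda u(1)-f^1(1)$, which must be built into the trial space. Your reduced problem, posed on the full product $H^1_{\frac{1}{\sigma},L}(0,1)\times H^1_R(1,2)$ with principal part ``coinciding with $\mathcal{B}$'', ignores this constraint; the solution produced by the Fredholm alternative will in general not satisfy $v(1)=y(1)$, hence will not lie in $D(\mathcal{A})$, so surjectivity of $i\lambda I-\mathcal{A}$ does not follow. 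Once the constraint is imposed, trial and test spaces no longer coincide (trial: $y(1)=i\lambda u(1)+\mathrm{const}$; test: $\varphi_1(1)=\varphi_2(1)$), and the natural repair (substituting $w=i\lambda u$ plus a lifting of the inhomogeneity so that both spaces carry the same constraint) puts the purely imaginary factor $(i\lambda)^{-1}$ in front of the wave block of the principal part. Coercivity is therefore \emph{not} inherited from $\mathcal{B}$ and must be re-proved, e.g.\ by a rotated-coercivity or inf-sup argument exploiting $\Re\kappa(\lambda)>0$. This is all repairable, but it is precisely the missing core of your Stage 2.

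It is also worth noting that your route is structurally different from the paper's, which never proves surjectivity at all: since $\mathcal{A}$ is m-dissipative, any point of $\sigma(\mathcal{A})\cap i\R$ is an approximate eigenvalue (this is what the references invoked before \eqref{pro2} provide), so the paper assumes $i\omega\in\sigma(\mathcal{A})$, takes $\lambda_n\to\omega$, $\|U^n\|_{\mathcal{H}}=1$, $(i\lambda_n I-\mathcal{A})U^n\to 0$, and derives the contradiction $\|U^n\|_{\mathcal{H}}\to 0$ via Lemmas \ref{lemma1} and \ref{lemma2}; the degenerate wave component is controlled there by the multiplier $-2\frac{x}{\sigma}\overline{u_x^n}+\frac{K_a}{2\sigma}\overline{u^n}$, and this is exactly where Hypothesis \ref{Condition1} enters. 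Your eigenvalue argument cannot be recycled at the approximate-eigenvector level (ODE uniqueness gives nothing for $(i\lambda_n-\mathcal{A})U^n\to 0$), which is why, in your approach, the Fredholm machinery --- with the transmission-condition repair above --- is unavoidable; if completed, it would have the merit of proving the proposition without the quantitative restrictions on $M_1$ and $M_2$, whereas the paper's argument has the merit of producing, along the way, the resolvent-type estimates reused in the stability sections.
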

We will prove Proposition \ref{proposition2} by a contradiction argument. Remark that, it has been proved in Proposition \ref{m-dissipative} that $0\in\rho(\mathcal{A})$. Now, suppose that \eqref{pro1} is false, then there exists $\omega\in \R^{\ast}$ such that $i\omega\notin \rho(\mathcal{A})$. According to According to Remark A.1 in \cite{AkilCPAA},  page 25 in \cite{LiuZheng01},  and Remark A.3 in \cite{Akil2022}, there exists $\lbrace \la_n, U^n=(u^n, v^n, y^n, \gamma^n)^{\top}\rbrace_{n\geq 1}\subset \R^{\ast}\times D(\mathcal{A})$, such that
\begin{equation}\label{pro2}
\la_n\to \omega\quad \text{as}\quad \, n\to\infty\quad\text{and}\quad |\la_n|<|\omega|,
\end{equation}
\begin{equation}\label{pro3}
\|U^n\|_{\mathcal{H}}=\|(u^n, v^n, y^n, \gamma^n)^{\top}\|_{\mathcal{H}}=1,
\end{equation}
and
\begin{equation}\label{pro4}
(i\la_n I-\mathcal{A})U^n=F_n:=(f^1_n, f^2_n, f^3_n, f^4_n(\cdot,s))\to 0\quad\text{in}\quad\mathcal{H}, \quad\text{as}\quad n\to \infty.
\end{equation}
Detailing \eqref{pro4}, we get
\
\begin{eqnarray}
i\la_nu^n-v^n &=&f^1_n\quad \text{in}\quad H^1_{\frac{1}{\sigma},L}(0,1),\label{pro5}\\
i\la_nv^n-\sigma(\eta u_x^n)_x &=&f^2_n \quad \text{in}\quad L^2_{\frac{1}{\sigma}}(0,1),\label{pro6}\\
i\la_ny^n-\zeta_{xx}^n &=& f^3_n\quad \text{in}\quad L^2(1,2),\label{pro7}\\
i\la_n\gamma^n+\gamma_s^n-y^n &=& f^4_n(\cdot,s)\quad \text{in}\quad W\label{pro8}.
\end{eqnarray}
We will proof condition \eqref{pro1} by finding a contradiction with \eqref{pro3} such as $\|U^n\|_{\mathcal{H}}\to 0$.  The proof of this Proposition will rely on the forthcoming Lemmas.
\begin{Lemma}\label{lemma1}
Let $m\in[0,1]$ and assume condition \eqref{H} and Hypothesis \ref{Condition1}. Then, the solution $(u^n, v^n, y^n, \gamma^n)^{\top}\in D(\mathcal{A})$ of \eqref{pro5}-\eqref{pro8} satisfies
\begin{equation}\label{QQ1}
\left\{\begin{array}{ll}
\displaystyle
\int_1^2\int_0^{\infty} -\mu^{\prime}(s)|\gamma^n_x|^2dsdx\xrightarrow[n\to\infty] {} 0 \\
 \displaystyle
 \int_1^2\int_0^{\infty} \mu(s)|\gamma^n_x|^2dsdx\xrightarrow[n\to\infty] {} 0, \\
  \displaystyle
 \int_1^2|y^n_x|^2dx \xrightarrow[n\to\infty] {} 0\\ \displaystyle
 \int_1^2|y^n|^2dx \xrightarrow[n\to\infty] {} 0\\ \displaystyle
\text{and}  \int_1^2|\zeta^n_x|^2dx \xrightarrow[n\to\infty] {} 0.
\end{array}\right.
\end{equation}
\end{Lemma}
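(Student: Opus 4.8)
The plan is to read off all five limits in \eqref{QQ1} from the single dissipation identity carried by the sequence $\{\lambda_n,U^n\}$, and then to propagate that information through the remaining components using the structural relations of the problem. The starting point is to pair the resolvent equation \eqref{pro4} with $U^n$ in $\mathcal{H}$ and take real parts. Since $\Re\langle i\lambda_n U^n,U^n\rangle_{\mathcal{H}}=\lambda_n\Re\bigl(i\|U^n\|_{\mathcal{H}}^2\bigr)=0$, and since the real part of $\langle\mathcal{A}U^n,U^n\rangle_{\mathcal{H}}$ was already computed in Proposition \ref{m-dissipative}, this gives
\begin{equation*}
\frac{cm}{2}\int_1^2\int_0^{\infty}\bigl(-\mu^{\prime}(s)\bigr)|\gamma^n_x|^2\,ds\,dx+c(1-m)\int_1^2|y^n_x|^2\,dx=-\Re\langle\mathcal{A}U^n,U^n\rangle_{\mathcal{H}}=\Re\langle F_n,U^n\rangle_{\mathcal{H}}.
\end{equation*}
By Cauchy--Schwarz together with \eqref{pro3}, the right-hand side is dominated by $\|F_n\|_{\mathcal{H}}\to 0$. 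Because $-\mu^{\prime}\geq 0$ by \eqref{H} and $m\in[0,1]$, the two terms on the left are nonnegative, so each of them tends to $0$ separately.

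For $m\in(0,1)$ this already delivers the first and third limits of \eqref{QQ1}, since one may divide by the strictly positive constants $\tfrac{cm}{2}$ and $c(1-m)$. The second limit then follows from the Dafermos condition in \eqref{H}, which gives $\mu(s)\leq -\tfrac{1}{K_{\mu}}\mu^{\prime}(s)$ and hence $\int_1^2\int_0^\infty\mu(s)|\gamma^n_x|^2\,ds\,dx\leq \tfrac{1}{K_{\mu}}\int_1^2\int_0^\infty(-\mu^{\prime}(s))|\gamma^n_x|^2\,ds\,dx\to 0$. The fourth limit is a Poincaré inequality on $(1,2)$: since $y^n\in H^1_R(1,2)$ satisfies $y^n(2)=0$, we have $\int_1^2|y^n|^2\,dx\leq C\int_1^2|y^n_x|^2\,dx\to 0$. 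Finally, writing $\zeta^n_x=c(1-m)y^n_x+cm\int_0^{\infty}\mu(s)\gamma^n_x(s)\,ds$ and applying Cauchy--Schwarz in $s$ with weight $\mu$, together with $\int_0^\infty\mu(s)\,ds=g(0)$, one bounds $\int_1^2|\zeta^n_x|^2\,dx$ by a multiple of $\int_1^2|y^n_x|^2\,dx+\int_1^2\int_0^\infty\mu(s)|\gamma^n_x|^2\,ds\,dx\to 0$, which is the fifth limit.

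The main obstacle is the degenerate endpoint $m=1$ (and, symmetrically, the trivial memory endpoint $m=0$). For $m=1$ the coefficient $c(1-m)$ vanishes, so the dissipation identity controls only the memory terms: it still yields the first limit, and hence the second via Dafermos, but it gives \emph{no} direct information on $\int_1^2|y^n_x|^2\,dx$. To recover the third limit here I would exploit the transport equation \eqref{pro8}. Differentiating it in $x$ and using the boundary condition $\gamma^n(\cdot,0)=0$ from $D(\mathcal{A})$, the first-order ODE in $s$ can be solved explicitly,
\begin{equation*}
\gamma^n_x(\cdot,s)=\frac{1-e^{-i\lambda_n s}}{i\lambda_n}\,y^n_x+\int_0^s e^{-i\lambda_n(s-\tau)}f^4_{nx}(\cdot,\tau)\,d\tau,
\end{equation*}
which displays $\gamma^n_x$ as a fixed multiple of $y^n_x$ plus a forcing term driven by $f^4_n$. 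Multiplying by $\mu(s)$ and integrating over $(1,2)\times(0,\infty)$, one uses the second limit $\int_1^2\int_0^\infty\mu|\gamma^n_x|^2\to 0$ on the left; on the right the forcing contribution is controlled by $\|f^4_n\|_{W}\to 0$ (here Proposition \ref{prop_1} of the Appendix is convenient), while the principal term carries the weight $\int_0^\infty\mu(s)\bigl|\tfrac{1-e^{-i\lambda_n s}}{i\lambda_n}\bigr|^2\,ds=\tfrac{4}{\lambda_n^2}\int_0^\infty\mu(s)\sin^2(\lambda_n s/2)\,ds$.

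The crucial point is that this last integral stays bounded below by a strictly positive constant for large $n$: since $\lambda_n\to\omega$ with $\omega\neq 0$ by \eqref{pro2}, it converges to $\tfrac{4}{\omega^2}\int_0^\infty\mu(s)\sin^2(\omega s/2)\,ds>0$, where positivity uses $\omega\neq 0$ and $\mu\not\equiv 0$ from \eqref{H}. Consequently $\int_1^2|y^n_x|^2\,dx\to 0$ also in the case $m=1$, after which the fourth and fifth limits follow exactly as above. The endpoint $m=0$ is easier, since the dissipation identity controls $\int_1^2|y^n_x|^2\,dx$ directly and the memory terms decouple; I expect the real work to be concentrated entirely in making the $m=1$ argument quantitative, in particular in estimating the convolution forcing term uniformly in $n$.
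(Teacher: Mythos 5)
Your proposal is correct, and on most of the lemma it coincides with the paper's proof: the same dissipation identity obtained by pairing \eqref{pro4} with $U^n$, the Dafermos condition for the second limit, Poincar\'e for the fourth, Cauchy--Schwarz on $\zeta^n_x$ for the fifth, and the direct reading of the identity when $c(1-m)>0$. The genuine divergence is the critical case $m=1$, where dissipation gives no control on $\int_1^2|y^n_x|^2dx$. The paper differentiates \eqref{pro8} in $x$, multiplies by $\mu(s)\overline{y^n_x}$, integrates over $(1,2)\times(0,\infty)$, integrates by parts in $s$ (using $\gamma^n(\cdot,0)=0$ and $\mu(0)<\infty$ from \eqref{H}) so that $\gamma^n_{sx}$ is replaced by $-\mu^{\prime}(s)\gamma^n_x$, and then absorbs all terms by Young's inequality, bounding $g(0)\int_1^2|y^n_x|^2dx$ by quantities already known to vanish. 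You instead solve the transport ODE explicitly, writing $\gamma^n_x=\tfrac{1-e^{-i\lambda_n s}}{i\lambda_n}y^n_x+\mathcal{F}_{nx}(s)$, and deduce $\mathcal{N}_n\int_1^2|y^n_x|^2dx\leq 2\int_1^2\int_0^{\infty}\mu|\gamma^n_x|^2\,ds\,dx+2\int_1^2\int_0^{\infty}\mu|\mathcal{F}_{nx}|^2\,ds\,dx$ with $\mathcal{N}_n=\lambda_n^{-2}\int_0^{\infty}\mu(s)|1-e^{-i\lambda_n s}|^2ds$; the forcing term is handled by Proposition \ref{prop_1}, and $\mathcal{N}_n$ stays bounded below because $\lambda_n\to\omega\neq 0$ (your dominated-convergence argument for positivity of the limit is sound, and this lower bound is exactly Proposition \ref{prop_2}). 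Both routes are valid here: yours needs $|\lambda_n|$ bounded away from zero, which is available in this strong-stability setting precisely because $\omega\neq 0$, while the paper's integration-by-parts route needs only $\mu(0)<\infty$ and boundedness of $\lambda_n$. Notably, your method is the very technique the paper deploys later in Lemma \ref{lem2-exp} for the Gurtin--Pipkin exponential estimate (where uniformity in $\lambda$ is required), so your proof is a legitimate and arguably more unified alternative that anticipates that machinery.
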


\begin{proof}
$\bullet$ Firstly, for the case where $m\in (0,1]$, we shall show the first and second limits in \eqref{QQ1}. Regarding the case in which $m=0$, these terms vanish.\\
Taking the inner product of \eqref{pro4} with $U$ in $\mathcal{H}$ and using the fact that$\|F_n\|_{\mathcal{H}}\to 0$ and $\|U^n\|_{\mathcal{H}}=1$ , we obtain
\begin{equation}\label{inner_product}
\frac{cm}{2}\int_1^2\int_0^{\infty}-\mu^{\prime}(s)|\gamma^n_x|^2dsdx+c(1-m)\int_1^2|y^n_x|^2dx=-\Re(\left<\mathcal{A}U,U\right>_{\mathcal{H}})\leq  \|F_n\|_{\mathcal{H}}\|U^n\|_{\mathcal{H}}\xrightarrow[n\to\infty] {} 0.
\end{equation}
Then,  using the fact that $m\in(0,1]$, we obtain the first limit in \eqref{QQ1}.
Using conditions \eqref{H},  we have
\begin{equation*}
\int_1^2\int_0^{\infty} \mu(s)|\gamma^n_x|^2dsdx\leq\frac{1}{K_{\mu}} \int_1^2\int_0^{\infty}- \mu^{\prime}(s)|\gamma^n_x|^2dsdx.
\end{equation*}
Then, using the above inequality and the first limit in \eqref{QQ1} we obtain the second limit in \eqref{QQ1}.\\
$\bullet$ Now, we will prove the third and the fourth limits in \eqref{QQ1} for the cases when $m\in[0,1)$ and $m=1$ separately.\\
\underline{\textbf{For the case when $\mathbf{m\in[0,1)}$}:}\\
From \eqref{inner_product} and the first limit in \eqref{QQ1},  we get the third limit in \eqref{QQ1}. Using the Poincar\'e inequality and the above result, we deduce the fourth limit in \eqref{QQ1}.\\
\underline{\textbf{For the case when $\mathbf{m=1}$}:}\\
Differentiate  \eqref{pro8} with respect to $x$,  then multiply by $\mu(s)\overline{y}_x$ and integrate over $(1,2)\times(0,\infty)$, we obtain
\begin{equation*}
g(0)\int_1^2|y^n_x|^2dx=i\la \int_1^2 \int_0^{\infty}\mu(s)\gamma^n_x\overline{y^n_x}dsdx+\int_1^2 \int_0^{\infty}\mu(s)\gamma^n_{sx}\overline{y^n_x}dsdx-\int_1^2 \int_0^{\infty}\mu(s)(f^4_n)_x\overline{y^n_x}dsdx.
\end{equation*}
Integrating by parts  the second term on the right hand side in the above equation with respect to $s$ and using the fact that $\gamma(\cdot,0)=0$ on $(1,2)$ and \eqref{H} , we get
\begin{equation}\label{g1}
g(0)\int_1^2|y^n_x|^2dx=i\la \int_1^2 \int_0^{\infty}\mu(s)\gamma^n_x\overline{y^n_x}dsdx+\int_1^2 \int_0^{\infty}-\mu^{\prime}(s)\gamma^n_{x}\overline{y^n_x}dsdx-\int_1^2 \int_0^{\infty}\mu(s)(f^4_n)_x\overline{y^n_x}dsdx.
\end{equation}
Using the Young and the Cauchy-Schwarz inequalities, we get
\begin{equation}\label{g2}
\left|i\la \int_1^2 \int_0^{\infty}\mu(s)\gamma^n_x\overline{y^n_x}dsdx\right|\leq 2|\la|^2 \int_1^2 \int_0^{\infty}\mu(s)|\gamma^n_x|^2dsdx+\frac{g(0)}{8}\int_1^2|y^n_x|^2dx,
\end{equation}
\begin{equation}\label{g3}
\begin{array}{ll}
\displaystyle
\left|\int_1^2 \int_0^{\infty}-\mu^{\prime}(s)\gamma^n_x\overline{y^n_x}dsdx\right|
&\displaystyle\leq \sqrt{\mu(0)} \left(\int_1^2 \int_0^{\infty}-\mu^{\prime}(s)|\gamma^n_x|^2dsdx\right)^{1/2}\left(\int_1^2|y^n_x|^2dsdx\right)^{1/2}
\\
&\displaystyle
\leq \frac{\mu(0)}{g(0)} \int_1^2 \int_0^{\infty}-\mu^{\prime}(s)|\gamma^n_x|^2dsdx+\frac{g(0)}{4}\int_1^2|y^n_x|^2dx,
\end{array}
\end{equation}
and 
\begin{equation}\label{g4}
\left|i\la \int_1^2 \int_0^{\infty}\mu(s) (f^4_n)_x\overline{y^n_x}dsdx\right|\leq 2 \int_1^2 \int_0^{\infty}\mu(s)|(f^4_n)_x|^2dsdx+\frac{g(0)}{8}\int_1^2|y^n_x|^2dx.
\end{equation}
So, using \eqref{g2}-\eqref{g4} in \eqref{g1}, the first and second limits in \eqref{QQ1},  we obtain the third limit in \eqref{QQ1}.  Using again the Poincar\'e inequality, we deduce the fourth limit in \eqref{QQ1}. \\
$\bullet$ Now, we will prove the last limit in \eqref{QQ1} when $m\in[0,1]$.\\
Applying the Young and the Cauchy-Schwarz inequalities, we obtain 
\begin{equation}\label{ineq4}
\begin{array}{ll}
\displaystyle
\int_1^2|\zeta^n_{x}|^2dx & \displaystyle\leq 2c^2(1-m)^2\int_1^2|y^n_x|^2dx+2c^2m^2\left(\int_0^{\infty}\mu(s)ds\right)\int_1^2\int_0^{\infty} \mu(s)|\gamma^n_x|^2dsdx\\ \displaystyle
&  \displaystyle \leq 2c^2(1-m)^2\int_1^2|y^n_x|^2dx+2c^2m^2 g(0)\int_1^2\int_0^{\infty} \mu(s)|\gamma^n_x|^2dsdx.
\end{array}
\end{equation}
Then, using the first and third limits in \eqref{QQ1} in the above inequality, we obtain the last limit in \eqref{QQ1}.  Thus the proof has been completed.
\end{proof}

\begin{Lemma}\label{lemma2}
Let $m\in[0,1]$, assume conditon \eqref{H} and Hypothesis \ref{Condition1}. Then, the solution $(u^n, v^n, y^n, \gamma^n)^{\top}\in D(\mathcal{A})$ of \eqref{pro5}-\eqref{pro8} satisfies
\begin{equation}\label{x1}
\int_0^1\eta|u_x^n|^2dx\xrightarrow[n\to\infty] {} 0\quad\text{and}\quad \int_0^1\frac{1}{\sigma}|v^n|^2dx \xrightarrow[n\to\infty] {} 0.
\end{equation}
\end{Lemma}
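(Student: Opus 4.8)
The aim is to transfer the dissipation obtained for the thermal part in Lemma \ref{lemma1} to the degenerate wave part: once \eqref{x1} is established, it together with \eqref{QQ1} yields $\|U^n\|_{\mathcal H}\to 0$, contradicting \eqref{pro3}. First I would eliminate $v^n$ from \eqref{pro5}--\eqref{pro6} by writing $v^n=i\la_n u^n-f^1_n$, so that the wave component solves the single second order equation $\la_n^2 u^n+\sigma(\eta u^n_x)_x=-\tilde f_n$ with $\tilde f_n:=f^2_n+i\la_n f^1_n\to 0$ in $L^2_{\frac1\sigma}(0,1)$ (the sequence $\la_n$ being bounded, and bounded away from $0$, by \eqref{pro2}). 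The whole argument then rests on two multiplier identities for this equation, whose interface contributions are supplied by the transmission relations $v^n(1)=y^n(1)$ and $\eta(1)u^n_x(1)=\zeta^n_x(1)$.

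Before running the multipliers I would show that every trace at the interface $x=1$ is negligible. Since $y^n\in H^1_R(1,2)$ with $y^n(2)=0$, we have $|y^n(1)|\le\|y^n_x\|_{L^2(1,2)}\to 0$ by Lemma \ref{lemma1}, while $f^1_n(1)\to 0$ through $H^1_{\frac1\sigma,L}(0,1)\hookrightarrow C([0,1])$; hence $u^n(1)=(y^n(1)+f^1_n(1))/(i\la_n)\to 0$ and $v^n(1)=y^n(1)\to 0$. For the flux, \eqref{pro7} gives $\zeta^n_{xx}=i\la_n y^n-f^3_n$, so $\|\zeta^n_{xx}\|_{L^2(1,2)}\to 0$ because $\|y^n\|_{L^2(1,2)}\to 0$ and $f^3_n\to 0$; together with $\|\zeta^n_x\|_{L^2(1,2)}\to 0$ (both from Lemma \ref{lemma1}) this gives $\zeta^n_x\to 0$ in $H^1(1,2)\hookrightarrow C([1,2])$, whence $\eta(1)u^n_x(1)=\zeta^n_x(1)\to 0$.

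The first identity is obtained by multiplying the reduced equation by $\tfrac1\sigma\overline{u^n}$ and integrating by parts; using $u^n(0)=0$ it takes the form
$$\int_0^1\eta|u^n_x|^2\,dx-\la_n^2\int_0^1\tfrac1\sigma|u^n|^2\,dx=\Re\!\big(\eta(1)u^n_x(1)\,\overline{u^n(1)}\big)+o(1),$$
and its right-hand side tends to $0$ by the preceding paragraph, yielding the equipartition relation $\la_n^2\int_0^1\tfrac1\sigma|u^n|^2\,dx=\int_0^1\eta|u^n_x|^2\,dx+o(1)$ (note that $\int_0^1\tfrac1\sigma|v^n|^2\,dx=\la_n^2\int_0^1\tfrac1\sigma|u^n|^2\,dx+o(1)$, so it suffices to control $\int_0^1\eta|u^n_x|^2\,dx$). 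The second, Rellich-type, identity is obtained by multiplying by the weighted derivative multiplier $\tfrac{2x}{\sigma}\overline{u^n_x}$; recalling $\eta'/\eta=b/a$ and $\sigma'/\sigma=(a'-b)/a$, the integrations by parts produce the interior densities $\big(1-x\tfrac{a'-b}{a}\big)\la_n^2\tfrac1\sigma|u^n|^2$ and $\big(1-x\tfrac{b}{a}\big)\eta|u^n_x|^2$, the interface terms at $x=1$ (negligible as above), a forcing term in $\tilde f_n$, and boundary terms at the degenerate endpoint $x=0$.

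The crux, and the step I expect to be hardest, is the degenerate endpoint. I would prove $\lim_{x\to0^+}x\eta|u^n_x|^2=0$ and $\lim_{x\to0^+}\tfrac{x}{\sigma}|u^n|^2=0$ from $u^n\in H^2_{\frac1\sigma,L}(0,1)$, the condition $u^n(0)=0$, and the standing assumption $K_a<2$, the latter also governing the harmlessness of the forcing integral $\int_0^1\tfrac{x}{\sigma}\tilde f_n\,\overline{u^n_x}\,dx$; this is precisely where the \eqref{WD}/\eqref{SD} dichotomy and a Hardy-type control of $\tfrac1\sigma|u^n|^2$ by $\eta|u^n_x|^2$ near $0$ intervene. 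Inserting the equipartition relation into the Rellich identity, the quantities $M_1,M_2$ of \eqref{M} measure the remainders $x\tfrac{a'-b}{a}$ and $x\tfrac{b}{a}$, and Hypothesis \ref{Condition1} ($M_1<1+\tfrac{K_a}{2}$, $M_2<1-\tfrac{K_a}{2}$) is exactly what makes the net coefficient in front of $\int_0^1\eta|u^n_x|^2\,dx$ strictly positive, forcing $\int_0^1\eta|u^n_x|^2\,dx\to 0$; the equipartition relation then gives $\int_0^1\tfrac1\sigma|v^n|^2\,dx\to 0$, which is \eqref{x1}. The delicate bookkeeping lies entirely in the sign of that coefficient and in the rigorous vanishing of the boundary terms at $x=0$.
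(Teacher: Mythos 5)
Your proposal is correct, and it reaches \eqref{x1} by a route that differs from the paper's in one essential point: how the two multipliers are combined. Write $P_n=\int_0^1\frac{1}{\sigma}|\la_n u^n|^2dx$ and $Q_n=\int_0^1\eta|u^n_x|^2dx$. The paper multiplies the reduced equation $\la_n^2u^n+\sigma(\eta u^n_x)_x=-i\la_nf^1_n-f^2_n$ by the \emph{single} combined multiplier $-2\frac{x}{\sigma}\overline{u^n_x}+\frac{K_a}{2\sigma}\overline{u^n}$ (Lemma \ref{lemmaAppendix}), obtaining $(1+\frac{K_a}{2}-M_1)P_n+(1-\frac{K_a}{2}-M_2)Q_n\le o(1)$, and Hypothesis \ref{Condition1} is invoked to make \emph{both} coefficients positive. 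You instead keep the two multipliers separate: the multiplier $\frac{1}{\sigma}\overline{u^n}$ gives the equipartition identity $P_n=Q_n+o(1)$, the multiplier $\frac{2x}{\sigma}\overline{u^n_x}$ gives the Rellich-type inequality $P_n+Q_n\le M_1P_n+M_2Q_n+o(1)$, and substituting the former into the latter gives $(2-M_1-M_2)Q_n\le o(1)$. This closes the argument because Hypothesis \ref{Condition1} implies $M_1+M_2<2$; in fact your route needs only this weaker sum condition, which is exactly the condition under which \emph{some} weight $\theta$ in the family of combined multipliers $-2\frac{x}{\sigma}\overline{u^n_x}+\frac{\theta}{\sigma}\overline{u^n}$ makes both coefficients $1+\theta-M_1$ and $1-\theta-M_2$ positive; the paper simply fixes $\theta=\frac{K_a}{2}$. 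Everything else matches the paper: your trace analysis at $x=1$ is the paper's Step 1 (with the cosmetic difference that you get $\zeta^n_x(1)\to0$ from $\|\zeta^n_{xx}\|_{L^2(1,2)}\to0$, valid since $\|y^n\|_{L^2(1,2)}\to0$ by Lemma \ref{lemma1}, together with the embedding $H^1(1,2)\hookrightarrow C([1,2])$, whereas the paper uses Gagliardo--Nirenberg, which needs only $\|\zeta^n_{xx}\|$ bounded); the boundary terms at the degenerate endpoint that you single out as the crux, namely $\lim_{x\to0}x\eta|u^n_x|^2=0$, $\lim_{x\to0}\frac{x}{\sigma}|\la_n u^n|^2=0$ and $\lim_{x\to0}\eta u^n_x\overline{u^n}=0$, are precisely the content of Lemma \ref{Lemma0} in the Appendix, which the paper uses inside Lemma \ref{lemmaAppendix} (note only that for $K_a>1$ the first limit additionally requires $\frac{xb}{a}\in L^{\infty}(0,1)$, i.e.\ $M_2<\infty$, which Hypothesis \ref{Condition1} provides); and your final passage from $P_n\to0$ to $\int_0^1\frac{1}{\sigma}|v^n|^2dx\to0$ via \eqref{pro5} and the Hardy--Poincar\'e inequality of Proposition \ref{Hardy} is the paper's last step verbatim.
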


\begin{proof}
The proof of this Lemma is divided into three steps.\\
\textbf{Step 1.} The aim of this step is to show that 
\begin{equation}\label{x3}
|\la_n u^n(1)|\xrightarrow[n\to\infty] {} 0  \quad\text{and}\quad |u^n_x(1)|\xrightarrow[n\to\infty] {} 0.
\end{equation}
Thanks to Lemma \ref{lemma1}, we have that 
\begin{equation}\label{x4}
|y^n(1)|\leq\int_1^2|y^n_x|dx\leq \left(\int_1^2|y^n_x|^2dx\right)^{1/2}\xrightarrow[n\to\infty] {} 0.
\end{equation}
Now, using equation \eqref{pro7},  the Gagliardo-Nirenberg inequality, Lemma \ref{lemma1} and the fact that $\|U^n\|_{\mathcal{H}}=1$, $\|F_n\|_{\mathcal{H}}\to 0$,  and $|\la_n| \to |\omega|$, we obtain
\begin{equation}\label{x5}
|\zeta^n_x(1)|\leq c_1\|\zeta^n_{xx}\|^{1/2} \|\zeta^n_{x}\|^{1/2}+c_2 \|\zeta^n_{x}\|\leq c_1  \|\la _n y^n-f^3_n\|^{1/2}\|\zeta^n_{x}\|^{1/2}+c_2 \|\zeta^n_{x}\|\xrightarrow[n\to\infty] {} 0.
\end{equation}
Using \eqref{pro5}, the transmission conditions in \eqref{system1}, \eqref{x4} and the fact that 
\begin{equation}\label{f1n}
|f^1_n(1)|\leq \int_0^1|(f^1_n)_x|dx\leq \sqrt{\max_{x\in[0,1]}\eta^{-1}}\|\sqrt{\eta}(f^1_n)_x\|\leq \sqrt{\max_{x\in[0,1]}\eta^{-1}}\|F_n\|_{\mathcal{H}}\to 0,
\end{equation}
we obtain
$$|\la_n u^n(1)|\leq |v^n(1)|+|f^1_n(1)|\leq |y^n(1)|+|f^1_n(1)|\xrightarrow[n\to\infty] {} 0.$$
Now,  using the transmission conditions and \eqref{x5}, we get $\displaystyle|\eta(1)u^n_x(1)|=|\zeta^n_x(1)|\xrightarrow[n\to\infty] {} 0.$ Thus,  we get \eqref{x3}.\\
\textbf{Step 2.} The aim of this step is to prove the first limit in \eqref{x1}. Substitute $v^n=i\la_n u^n-f_n^1$ into \eqref{pro6}, we get
\begin{equation}\label{x8}
\la_n^2 u^n+\sigma(\eta u^n_x)_x=-i\la_n f_n^1-f^2_n.
\end{equation}
Multiplying \eqref{x8} by $\displaystyle -2\frac{x}{\sigma}\overline{u_x^n}+\frac{K_a}{2\sigma}\overline{u^n}$,   and integrating by parts over $(0,1)$, we obtain
\begin{equation*}
\begin{array}{l}\displaystyle
(1+\frac{K_a}{2})\int_0^1\frac{1}{\sigma}|\la_{n} u^n|^2dx+(1-\frac{K_a}{2})\int_0^1\eta| u^n_x|^2dx=\int_0^1\frac{x}{\sigma}\left(\frac{a^{\prime}-b}{a}\right)|\la_n u^2|^2dx+\int_0^1x\frac{b}{a}\eta|u_x^n|^2dx
\\ 
\displaystyle
+\eta(1) |u^n_x(1)|^2 
+\frac{1}{\sigma(1)}|\la_n u^n(1)|^2-\Re\left(\frac{K_a}{2}\eta(1) u_x^n(1)\overline{u^n}(1)\right)
+2\Re\left(\int_0^1\dfrac{x}{\sigma}f^2_n\overline{u^n_x}dx\right)\\\
\displaystyle
+2\Re\left(i\la_n\int_0^1\dfrac{x}{\sigma}f^1_n\overline{u^n_x}dx\right)
-\Re\left(i\frac{K_a}{2}\int_0^1\frac{1}{\sigma}f_n^1\la_n\overline{u^n}\right)
-\Re\left(\frac{K_a}{2}\int_0^1\frac{1}{\sigma}f_n^2\overline{u^n}\right).
\end{array}
\end{equation*}
Thus, we get
\begin{equation}\label{k1}
\begin{array}{l}\displaystyle
(1+\frac{K_a}{2}-M_1)\int_0^1\frac{1}{\sigma}|\la_{n} u^n|^2dx+(1-\frac{K_a}{2}-M_2)\int_0^1\eta| u^n_x|^2dx\leq\eta(1) |u^n_x(1)|^2 \\ 
\displaystyle

+\frac{1}{\sigma(1)}|\la_n u^n(1)|^2+\left|\Re\left(\frac{K_a}{2}\eta(1) u_x^n(1)\overline{u^n}(1)\right)\right|
+\left|2\Re\left(\int_0^1\dfrac{x}{\sigma}f^2_n\overline{u^n_x}dx\right)\right|\\
\displaystyle
+\left|2\Re\left(i\la_n\int_0^1\dfrac{x}{\sigma}f^1_n\overline{u^n_x}dx\right)\right|
+\left|\Re\left(i\frac{K_a}{2}\int_0^1\frac{1}{\sigma}f_n^1\la_n\overline{u^n}\right)\right|
+\left|\Re\left(\frac{K_a}{2}\int_0^1\frac{1}{\sigma}f_n^2\overline{u^n}\right)\right|.
\end{array}
\end{equation}
Now, we consider the last four terms in \eqref{k1}. First, using the Cauchy-Schwarz inequality,  the fact that $\|F_n\|\to 0$,  $\|U^n\|=1$,  and the monotonicity of $\dfrac{x}{\sqrt{a}}$ in $(0,1]$, we get
\begin{equation}\label{x10}
\left|2\Re\left(\int_0^1\dfrac{x}{\sigma}f^2_n\overline{u^n_x}dx\right)\right|\leq 2\int_0^1\frac{x}{\sqrt{a}}\frac{1}{\sqrt{\sigma}}|f^2_n|\sqrt{\eta}|u^n_x|dx\leq \frac{2}{\sqrt{a(1)}}\|F_n\|_{\mathcal{H}}\|U^n\|_{\mathcal{H}}\xrightarrow[n\to\infty] {} 0.
\end{equation}
Applying the Cauchy-Schwarz and the Hardy-Poincaré inequalities, using the fact that $\|F_n\|\to 0$,  $\|U^n\|=1$,  $|\la_n| \to |\omega|$ and again the monotonicity of $\dfrac{x}{\sqrt{a}}$, we have
\begin{equation}\label{x11}
\left|2\Re\left(i\la_n\int_0^1\dfrac{x}{\sigma}f^1_n\overline{u^n_x}dx\right)\right| \leq 2|\la_n|\int_0^1 \frac{x}{\sqrt{a}}\frac{1}{\sqrt{\sigma}}|f^1_n|\sqrt{\eta}|u^n_x|dx \leq 2|\la_n|\sqrt{\frac{c_0}{a(1)}}\|F_n\|_{\mathcal{H}}\|U^n\|_{\mathcal{H}}\xrightarrow[n\to\infty] {} 0,
\end{equation}
\begin{equation}\label{x11new}
\Re\left(i\frac{K_a}{2}\int_0^1\frac{1}{\sigma}f_n^1\la_n\overline{u^n}\right)\leq |\la_n|\frac{K_a}{2}c_0^2\|F_n\|_{\mathcal{H}}\|U^n\|_{\mathcal{H}}\xrightarrow[n\to\infty] {} 0,
\end{equation}
and
\begin{equation}\label{x112new}
\Re\left(\frac{K_a}{2}\int_0^1\frac{1}{\sigma}f_n^2\overline{u^n}\right)\leq \frac{K_a}{2}c_0\|F_n\|_{\mathcal{H}}\|U^n\|_{\mathcal{H}}\xrightarrow[n\to\infty] {} 0.
\end{equation}
where $c_0=\sqrt{C_{HP}\max\limits_{x\in[0,1]}\eta^{-1}}$.\\
Now, using \eqref{x3} and \eqref{x10}-\eqref{x112new} in \eqref{k1} and the fact that $\la_n\to \omega$, we get
\begin{equation}\label{x12}
(1+\frac{K_a}{2}-M_1)\int_0^1 \frac{1}{\sigma}|\la_n u^n|^2dx+(1-\frac{K_a}{2}-M_2)\int_0^1\eta |u^n_x|^2dx\xrightarrow[n\to\infty] {} 0.
\end{equation}
Thus, using Hypothesis \ref{Condition1} in the above equation we get the desired result.\\
\textbf{Step 4.} The aim of this step is to prove the second limit in \eqref{x1}. From \eqref{pro5},  and using \eqref{x12},  Hardy-Poincare inequality given in Proposition \ref{Hardy} in the Appendix and the fact that $\|F_n\|\to 0$,  we get
\begin{equation}
\int_0^1\frac{1}{\sigma}|v^n|^2dx\leq 2\int_0^1\frac{1}{\sigma}|\la_n u^n|^2dx+2C_{HP}\max_{x\in [0,1]}\eta\int_0^1|(f^1_n)_x|^2dx\xrightarrow[n\to\infty] {} 0.
\end{equation}
\end{proof}

\textbf{Proof of Proposition \ref{proposition2}} From Lemmas \ref{lemma1} and \ref{lemma2}, we obtain that $\|U^n\|_{\mathcal{H}}\to 0$ as $n\to 0$ which contradicts that $\|U^n\|_{\mathcal{H}}=1$ in \eqref{pro3}. Then, \eqref{pro1} holds true and the proof is complete.

\section{\textbf{Polynomial stability in the case of Coleman-Gurtin Heat conduction law }}\label{SectionPoly}
This section is devoted to study the polynomial stability of the system under consideration, specifically in the case of the Coleman-Gurtin heat conduction law with the parameter $m$ belonging to the interval $(0,1)$. The main result of this section is presented in the following theorem.
\begin{Theorem}\label{polynomial Stab}
Assume condition \eqref{H}, Hypothesis \ref{Condition1} and $m\in(0,1)$. Then, there exits $C_0>0$ such that for all $U_0\in D(\mathcal{A})$ we have
\begin{equation}
\mathcal{E}(t)\leq \frac{C_0}{t^{4}}\|U_0\|_{D(\mathcal{A})}^2,\quad t>0.
\end{equation}
\end{Theorem}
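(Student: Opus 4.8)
The plan is to deduce the decay from the Borichev--Tomilov frequency-domain theorem (recalled in the last section). By Proposition~\ref{m-dissipative} the semigroup $(e^{t\mathcal A})_{t\ge0}$ is a bounded contraction semigroup, and by Proposition~\ref{proposition2} we already have $i\R\subset\rho(\mathcal A)$. Since the announced estimate $\mathcal E(t)\le C_0 t^{-4}\|U_0\|_{D(\mathcal A)}^2$ is the same as $\|e^{t\mathcal A}U_0\|_{\mathcal H}\le C\,t^{-2}\|U_0\|_{D(\mathcal A)}$, the criterion reduces the theorem to the resolvent bound
\[
\limsup_{|\lambda|\to\infty}\frac{1}{|\lambda|^{1/2}}\,\big\|(i\lambda I-\mathcal A)^{-1}\big\|_{\mathcal L(\mathcal H)}<\infty .
\]
I would prove this by contradiction, following the scheme of Section~\ref{Strong Stability}: if it fails there exist $\lambda_n\in\R^\ast$ with $|\lambda_n|\to\infty$ and $U^n=(u^n,v^n,y^n,\gamma^n)^\top\in D(\mathcal A)$ with $\|U^n\|_{\mathcal H}=1$ such that $F_n:=(i\lambda_n I-\mathcal A)U^n$ obeys $|\lambda_n|^{1/2}\|F_n\|_{\mathcal H}\to0$, i.e. $\|F_n\|_{\mathcal H}=o(|\lambda_n|^{-1/2})$; the goal is to reach $\|U^n\|_{\mathcal H}\to0$.

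The first, routine, part reruns Lemma~\ref{lemma1} while tracking rates. Taking the inner product of $F_n$ with $U^n$ and using $m\in(0,1)$ and condition~\eqref{H} gives
\[
\int_1^2\!\!\int_0^\infty\!\!\big(-\mu'(s)\big)|\gamma_x^n|^2\,ds\,dx+\int_1^2|y_x^n|^2\,dx\le \|F_n\|_{\mathcal H}\|U^n\|_{\mathcal H}=o(|\lambda_n|^{-1/2}),
\]
whence, via the Dafermos condition and Poincar\'e's inequality, the memory term $\int_1^2\!\int_0^\infty\mu(s)|\gamma_x^n|^2\,ds\,dx$, together with $\|y^n\|_{L^2(1,2)}^2$ and $\|\zeta_x^n\|_{L^2(1,2)}^2$, are all $o(|\lambda_n|^{-1/2})$. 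In particular the heat-memory part of $\|U^n\|_{\mathcal H}^2$ already tends to $0$, so the whole problem is to show that the wave energy $\int_0^1\eta|u_x^n|^2dx+\int_0^1\frac1\sigma|v^n|^2dx$ vanishes in the limit.

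The core difficulty, and the step I expect to be the genuine obstacle, is a sharp trace estimate at the interface $x=1$. Re-running the multiplier computation of Lemma~\ref{lemma2} on \eqref{pro6} with the multiplier $-2\frac{x}{\sigma}\overline{u^n_x}+\frac{K_a}{2\sigma}\overline{u^n}$ bounds the wave energy by $\eta(1)|u^n_x(1)|^2$ and $\frac1{\sigma(1)}|\lambda_n u^n(1)|^2$, \emph{provided} the interior source terms are estimated without losing a factor $|\lambda_n|$: this forces one to integrate by parts in $i\lambda_n\int_0^1\frac{x}{\sigma}f^1_n\overline{u^n_x}dx$, moving the derivative onto $f^1_n$ and using $\|u^n\|\lesssim|\lambda_n|^{-1}$ together with the Hardy--Poincar\'e inequality, so that all source contributions become $O(\|F_n\|_{\mathcal H}\|U^n\|_{\mathcal H})=o(1)$. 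Through the transmission conditions $v^n(1)=y^n(1)$ and $\eta(1)u^n_x(1)=\zeta^n_x(1)$, the displacement term is harmless since $|\lambda_n u^n(1)|=|y^n(1)+f^1_n(1)|=o(1)$; the dangerous one is $\eta(1)|u^n_x(1)|^2=\eta(1)^{-1}|\zeta^n_x(1)|^2$, for which the naive Gagliardo--Nirenberg bound only gives $|\zeta^n_x(1)|=o(|\lambda_n|^{1/4})$, i.e. $\eta(1)|u^n_x(1)|^2=o(|\lambda_n|^{1/2})$, which does not vanish. To gain the missing half-power I would exploit that for $m\in(0,1)$ the heat part is genuinely parabolic: solving \eqref{pro8} with $\gamma^n(\cdot,0)=0$ and substituting into \eqref{zeta_function} shows $\zeta^n=c(1-m)y^n+(\text{memory remainder})$, the remainder being of lower order because $\int_0^\infty\mu(s)\frac{1-e^{-i\lambda_n s}}{i\lambda_n}ds\to0$; thus $y^n$ satisfies, to leading order, the Fourier resolvent equation $i\lambda_n y^n-c(1-m)y^n_{xx}=\widetilde f_n$ on $(1,2)$ with $y^n(2)=0$. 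The associated Dirichlet-to-Neumann law gives $\zeta^n_x(1)\simeq-\sqrt{i\,c(1-m)\,\lambda_n}\;y^n(1)$, while testing against the boundary-layer profile $e^{-\rho(x-1)}$, $\rho\simeq\sqrt{i\lambda_n/(c(1-m))}$ (so $\Re\rho\simeq|\lambda_n|^{1/2}$), yields the sharp trace inequality $|\lambda_n|^{1/2}|y^n(1)|^2\lesssim\int_1^2|y^n_x|^2dx$. Combining the two gives $\eta(1)|u^n_x(1)|^2\simeq c(1-m)\eta(1)^{-1}|\lambda_n|\,|y^n(1)|^2\lesssim|\lambda_n|^{1/2}\int_1^2|y^n_x|^2dx=|\lambda_n|^{1/2}o(|\lambda_n|^{-1/2})\to0$.

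With the two boundary terms and the source terms all tending to $0$, the coercivity of the left-hand side of the multiplier identity, guaranteed by $M_1<1+\frac{K_a}2$ and $M_2<1-\frac{K_a}2$ in Hypothesis~\ref{Condition1}, gives $\int_0^1\frac1\sigma|\lambda_n u^n|^2dx+\int_0^1\eta|u^n_x|^2dx\to0$, and then $\int_0^1\frac1\sigma|v^n|^2dx\to0$ follows from \eqref{pro5}. Together with the vanishing of the heat-memory component this forces $\|U^n\|_{\mathcal H}\to0$, contradicting $\|U^n\|_{\mathcal H}=1$; the resolvent bound, and hence the $t^{-4}$ decay, follows. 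The whole weight of the argument sits in the interface trace estimate of the previous paragraph, where the strict positivity of the instantaneous diffusivity $c(1-m)$ (i.e. $m<1$) is essential; everything else is a rate-tracking rerun of Lemmas~\ref{lemma1} and~\ref{lemma2}.
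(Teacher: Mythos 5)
Your frequency-domain scaffolding coincides with the paper's: the Borichev--Tomilov reduction to the resolvent bound $\|(i\la I-\mathcal A)^{-1}\|=O(|\la|^{1/2})$, the dissipation estimates for the memory/heat component, the multiplier $-2\frac{x}{\sigma}\overline{u_x}+\frac{K_a}{2\sigma}\overline{u}$ for the wave part, and the use of the transmission conditions to convert interface terms; all of that is sound. The genuine gap sits exactly in the step you yourself call the crux: the bound on $\eta(1)|u^n_x(1)|^2=\eta(1)^{-1}|\zeta^n_x(1)|^2$. Your boundary-layer/Dirichlet-to-Neumann argument cannot be run as stated. In the resolvent problem one only has $y^n\in H^1_R(1,2)$ and $\zeta^n\in H^2(1,2)$: inverting \eqref{zeta_function} with the solution formula for \eqref{pro8} gives $c(1-m)y^n_x=\zeta^n_x-\frac{cm}{i\la_n}\bigl(\int_0^\infty\mu(s)(1-e^{-i\la_n s})ds\bigr)y^n_x-cm\int_0^\infty\mu(s)\mathcal F_x(s)\,ds$ with $\mathcal F(s)=\int_0^se^{-i\la_n(s-\tau)}f^4_n(\tau)\,d\tau$, and since $f^4_n$ carries only one spatial derivative, the memory term $\int_0^\infty\mu(s)\mathcal F_x(s)\,ds$ is merely an $L^2(1,2)$ function. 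Hence $y^n$ has no second derivative in $L^2$ and no well-defined traces $y^n_x(1)$, $y^n_x(2)$; your ``Fourier resolvent equation'' $i\la_n y^n-c(1-m)y^n_{xx}=\tilde f_n$ holds only distributionally, with a forcing not controlled by $\|F_n\|_{\mathcal H}$, and the identity produced by testing against $e^{-\rho(x-1)}$, namely $c(1-m)\bigl(y^n_x(1)+\rho y^n(1)\bigr)=\int_1^2\tilde f_n e^{-\rho(x-1)}dx+c(1-m)y^n_x(2)e^{-\rho}$, involves precisely these nonexistent and uncontrolled traces. Moreover, the ``sharp trace inequality'' $|\la_n|^{1/2}|y^n(1)|^2\lesssim\int_1^2|y^n_x|^2dx$ is false for generic $H^1$ functions (fix $y$ with $y(1)\neq0$ and let $|\la_n|\to\infty$), so it can only come out of such an identity, and your sketch never closes that loop; the same goes for the approximate law $\zeta^n_x(1)\simeq-\sqrt{ic(1-m)\la_n}\,y^n(1)$, whose error terms are left unestimated.

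What you are missing is an elementary estimate that makes the boundary-layer detour unnecessary, and it is the paper's Lemma \ref{lem2_pol}: multiplying \eqref{pol6} by $\frac1{i\la}\overline y$, integrating by parts, bounding the interface term $\frac1{i\la}\zeta_x(1)\overline y(1)$ by Gagliardo--Nirenberg and absorbing every factor of $\|y\|$ into the left-hand side yields $\int_1^2|y|^2dx\lesssim|\la|^{-1}\bigl(\|U\|_{\mathcal H}\|F\|_{\mathcal H}+\|F\|^2_{\mathcal H}\bigr)$, i.e.\ $\|y^n\|=o(|\la_n|^{-3/4})$ in your contradiction setting, one half-power better than the Poincar\'e bound $\|y^n\|=o(|\la_n|^{-1/4})$ at which you stop. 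With this gain the obstacle you describe evaporates: $\|\zeta^n_{xx}\|\le|\la_n|\,\|y^n\|+\|f^3_n\|=o(|\la_n|^{1/4})$ and $\|\zeta^n_x\|=o(|\la_n|^{-1/4})$, so the very Gagliardo--Nirenberg bound you dismissed gives $|\zeta^n_x(1)|\lesssim\|\zeta^n_{xx}\|^{1/2}\|\zeta^n_x\|^{1/2}+\|\zeta^n_x\|=o(1)$, and your wave multiplier step then closes exactly as planned. (The paper packages the same information in direct form in Lemma \ref{lem3_pol}, applying the multiplier $2(x-2)\overline{\zeta}_x$ to \eqref{pol6} to get $|\zeta_x(1)|^2\lesssim(1+|\la|^{1/2})\bigl(\|U\|_{\mathcal H}\|F\|_{\mathcal H}+\|F\|^2_{\mathcal H}\bigr)$, an argument that never differentiates $y$ or $\gamma$ twice in space.) Your parabolic boundary-layer picture is the right intuition for why $m<1$ yields the extra half-power, but turning it into a proof would require a full variation-of-parameters analysis with every error term placed in the available function spaces, which your sketch does not carry out and which the multiplier route avoids entirely.
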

\noindent According to Theorem of Borichev and Tomilov \cite{Borichev01} (see also \cite{RaoLiu01} and \cite{Batty01}), in order to prove Theorem \ref{polynomial Stab} we need to prove that the following two conditions hold:
\begin{equation}\label{pol1}\tag{R1}
i\R\subset \rho(\mathcal{A}),
\end{equation}
\begin{equation}\label{pol2}\tag{R2}
\limsup_{|\la|\to\infty}\frac{1}{\la^{\frac{1}{2}}}\|(i\la I-\mathcal{A})^{-1}\|<\infty.
\end{equation}
\begin{Proposition}\label{proposition3}
Under condition \eqref{H} and Hypothesis \ref{Condition1}, let  $( \la, U=(u, v, y, \gamma)^{\top})\subset \R^{\ast}\times D(\mathcal{A})$, with $|\la|\geq 1$ such that
\begin{equation}\label{pol3}
(i\la I-\mathcal{A})U=F:=(f^1, f^2, f^3, f^4(\cdot,s))^{\top}\in\mathcal{H},
\end{equation}
i.e.
\begin{eqnarray}
i\la u-v&=&f^1,\label{pol4}\\
i\la v-\sigma(\eta u_x)_x &=&f^2,\label{pol5}\\
i\la y -\zeta_{xx}  &=& f^3,\label{pol6}\\
i\la \gamma+\gamma_s-y &=& f^4(\cdot,s) \label{pol7}.
\end{eqnarray}
Then, we have the following inequality
\begin{equation}\label{pol8}
\|U\|_{\mathcal{H}}\leq K_1(1+|\la|^{\frac{1}{2}}) \|F\|_{\mathcal{H}},
\end{equation}
where $K_1$ is a constant independent of $\la$ to be determined.
\end{Proposition}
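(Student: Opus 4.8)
The plan is to prove the resolvent estimate \eqref{pol8}, since together with \eqref{pol1} --- already established in Proposition \ref{proposition2} --- it furnishes exactly conditions \eqref{pol1}--\eqref{pol2}, and then the Borichev--Tomilov theorem \cite{Borichev01} yields $\mathcal{E}(t)\le C_0 t^{-4}\|U_0\|_{D(\mathcal{A})}^2$. The whole argument is a frequency-domain multiplier analysis of \eqref{pol4}--\eqref{pol7}, carried out along the lines of Lemmas \ref{lemma1}--\ref{lemma2}, but now retaining the explicit dependence on $\la$ instead of letting $\|F\|_{\mathcal{H}}\to0$. Throughout I abbreviate $P:=\left(\|F\|_{\mathcal{H}}\|U\|_{\mathcal{H}}\right)^{1/2}$ and $\|y\|:=\|y\|_{L^2(1,2)}$, and I use $|\la|\ge1$.

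First I would extract the dissipation by taking the $\mathcal{H}$-inner product of \eqref{pol3} with $U$ and using the formula for $\Re\langle\mathcal{A}U,U\rangle_{\mathcal{H}}$: this gives $\frac{cm}{2}\int_1^2\int_0^\infty(-\mu'(s))|\gamma_x|^2\,ds\,dx+c(1-m)\int_1^2|y_x|^2\,dx\le\|F\|_{\mathcal{H}}\|U\|_{\mathcal{H}}=P^2$. Since $m\in(0,1)$ both damping terms are active, so exactly as in Lemma \ref{lemma1} the Dafermos condition in \eqref{H} controls $\int_1^2\int_0^\infty\mu(s)|\gamma_x|^2\,ds\,dx$ and $\int_1^2|y_x|^2\,dx$ by $CP^2$; Poincar\'e then gives $\|y\|^2\le CP^2$, and estimate \eqref{ineq4} gives $\int_1^2|\zeta_x|^2\,dx\le CP^2$, i.e. $\|\zeta_x\|\le CP$. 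In particular the thermal and memory parts of $\|U\|_{\mathcal{H}}^2$ already cost only order $P^2$, and the only quantity left to control is the wave energy $E:=\int_0^1\eta|u_x|^2\,dx+\int_0^1\frac1\sigma|v|^2\,dx$.

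The crux --- and what I expect to be the \emph{main obstacle} --- is the interface trace $\zeta_x(1)=\eta(1)u_x(1)$, which governs $E$ but a priori costs a full power of $\la$. I would derive two coupled inequalities for $Z:=|\zeta_x(1)|$ and $\|y\|$. On one hand, Gagliardo--Nirenberg together with $\zeta_{xx}=i\la y-f^3$ from \eqref{pol6} and $\|\zeta_x\|\le CP$ gives $Z^2\le C\|\zeta_x\|\,\|\zeta_{xx}\|+C\|\zeta_x\|^2\le C|\la|\,\|y\|\,P+CP^2$. On the other hand, testing \eqref{pol6} against $\overline{y}$ and integrating by parts over $(1,2)$ (with $y(2)=0$) produces the interface term $\zeta_x(1)\overline{y(1)}$; bounding it by $Z\,|y(1)|$ and invoking the sharp one-dimensional trace $|y(1)|^2\le 2\|y\|\,\|y_x\|$ with $\|y_x\|\le CP$, the imaginary part of the identity gives $|\la|\,\|y\|^2\le C\,Z\,\|y\|^{1/2}P^{1/2}+CP^2$. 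The decisive point is that this second inequality retains the small factor $\|y\|$ in the boundary term: eliminating $\|y\|$ and then $Z$ between the two relations by two successive Young inequalities produces $\|y\|^2\le C|\la|^{-1}P^2$ and hence the sharp trace bound $Z^2=|\zeta_x(1)|^2\le C|\la|^{1/2}P^2=C|\la|^{1/2}\|F\|_{\mathcal{H}}\|U\|_{\mathcal{H}}$. Using instead the cruder $|y(1)|\le CP$ would degrade the power of $\la$, and it is exactly this gain of a half power that upgrades the decay from $t^{-2}$ to $t^{-4}$.

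It remains to bound $E$ by $Z^2$. Substituting $v=i\la u-f^1$ into \eqref{pol5} and applying the degenerate Rellich--Pohozaev multiplier $-2\frac{x}{\sigma}\overline{u_x}+\frac{K_a}{2\sigma}\overline{u}$ exactly as in Step 2 of Lemma \ref{lemma2}, Hypothesis \ref{Condition1} makes the coefficients $1+\frac{K_a}{2}-M_1$ and $1-\frac{K_a}{2}-M_2$ strictly positive and produces $E\le C\,\eta(1)|u_x(1)|^2+\frac{C}{\sigma(1)}|\la u(1)|^2+(\text{source terms})+(\text{lower order})$, the contribution at $x=0$ being absorbed by the Hardy--Poincar\'e inequality of Proposition \ref{Hardy}. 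Here $\eta(1)|u_x(1)|^2=Z^2/\eta(1)$ is controlled by the previous paragraph, while $\frac1{\sigma(1)}|\la u(1)|^2\le C|y(1)|^2+C|f^1(1)|^2\le CP^2+C\|F\|_{\mathcal{H}}^2$ using the transmission relations $i\la u(1)=v(1)+f^1(1)=y(1)+f^1(1)$. The $\la$-weighted source terms are the most delicate: I would integrate them by parts, trading the factor $\la$ for a derivative of $f^1$ via $i\la\overline{u}=-(\overline{v}+\overline{f^1})$, so that they too reduce to order $P^2+\|F\|_{\mathcal{H}}^2$. Collecting everything gives $\|U\|_{\mathcal{H}}^2\le C|\la|^{1/2}\|F\|_{\mathcal{H}}\|U\|_{\mathcal{H}}+C\|F\|_{\mathcal{H}}\|U\|_{\mathcal{H}}+C\|F\|_{\mathcal{H}}^2$, and a final Young inequality splitting $C|\la|^{1/2}\|F\|_{\mathcal{H}}\|U\|_{\mathcal{H}}\le\frac12\|U\|_{\mathcal{H}}^2+\frac{C}{2}|\la|\,\|F\|_{\mathcal{H}}^2$ absorbs the quadratic term on the left and yields \eqref{pol8}.
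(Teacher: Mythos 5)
Your proposal is correct, and its skeleton is the same as the paper's: dissipation estimates from $\Re\langle\mathcal{A}U,U\rangle_{\mathcal{H}}$ (Lemma \ref{lem1_pol}), the key bound $\|y\|^2\lesssim|\la|^{-1}\left(\|U\|_{\mathcal{H}}\|F\|_{\mathcal{H}}+\|F\|^2_{\mathcal{H}}\right)$ (Lemma \ref{lem2_pol}), the trace bound $|\zeta_x(1)|^2\lesssim(1+|\la|^{1/2})\left(\|U\|_{\mathcal{H}}\|F\|_{\mathcal{H}}+\|F\|^2_{\mathcal{H}}\right)$ (Lemma \ref{lem3_pol}), the degenerate multiplier $-2\frac{x}{\sigma}\overline{u}_x+\frac{K_a}{2\sigma}\overline{u}$ with Hardy--Poincar\'e and the transmission conditions for the wave part (Lemmas \ref{lemmaAppendix} and \ref{lem4_pol}), and a final Young absorption. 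Where you genuinely deviate is the middle section. The paper obtains the $\|y\|$ estimate by inserting Gagliardo--Nirenberg bounds for \emph{both} $\zeta_x(1)$ and $y(1)$ directly into the identity \eqref{p3} and absorbing through a chain of explicit Young inequalities; it then derives the $\zeta_x(1)$ bound from a \emph{second} multiplier identity, $2(x-2)\overline{\zeta}_x$ applied to \eqref{pol6} (see \eqref{p12}), and bounds $|y(1)|$ by a separate computation on the memory equation (differentiating \eqref{pol7} and testing against $\mu(s)\overline{y}$, see \eqref{p16}). You instead keep $Z=|\zeta_x(1)|$ as an unknown, write the two coupled inequalities $Z^2\lesssim|\la|\|y\|P+P^2$ (Gagliardo--Nirenberg plus $\zeta_{xx}=i\la y-f^3$) and $|\la|\|y\|^2\lesssim Z\|y\|^{1/2}P^{1/2}+P^2$ (testing \eqref{pol6} against $\overline{y}$ plus the elementary trace inequality $|y(1)|^2\le 2\|y\|\,\|y_x\|$), and eliminate; this dispenses with the $(x-2)$ multiplier and with the memory-equation computation altogether, and your interpolation bound for $|y(1)|^2$ is in fact sharper than the paper's \eqref{est6}, which carries an extra factor $(1+|\la|^{1/2})$. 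What the paper's sequential version buys is fully explicit constants $\kappa_i$ at every stage --- which the statement of the Proposition asks for ($K_1$ ``to be determined'') --- and no need to solve a coupled system; both routes produce the same decisive half power of $\la$. Two small points to tighten in your write-up: (i) with $P^2=\|F\|_{\mathcal{H}}\|U\|_{\mathcal{H}}$ alone, terms like $\|f^3\|\,\|\zeta_x\|$ are not bounded by $CP^2$, so either carry $\|F\|_{\mathcal{H}}^2$ along as the paper does or observe that \eqref{pol8} is trivial when $\|F\|_{\mathcal{H}}\geq\|U\|_{\mathcal{H}}$; (ii) the vanishing of the boundary contributions at $x=0$ in the multiplier identity is justified by Lemma \ref{Lemma0}, not by Hardy--Poincar\'e, which enters only in the estimates of the source terms.
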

In order to prove Proposition \ref{proposition3}, we need the following Lemmas.
\begin{Lemma}\label{lem1_pol}
Assume condition \eqref{H}, Hypothesis \ref{Condition1},  $m\in(0,1)$ and $|\la|\geq 1$. Then, the solution $(u, v, y, \gamma)^{\top}\in D(\mathcal{A})$ of \eqref{pol3} satisfies the following estimates
\begin{equation}\label{est0}
\int_1^2\int_0^{\infty}-\mu^{\prime}(s)|\gamma_x|^2dsdx\leq \kappa_0  \left(\|U\|_{\mathcal{H}}\|F\|_{\mathcal{H}}+\|F\|^2_{\mathcal{H}}\right),
\end{equation} 
\begin{equation}\label{est1}
\int_1^2 |y_x|^2dx\leq \kappa_1  \left(\|U\|_{\mathcal{H}}\|F\|_{\mathcal{H}}+\|F\|^2_{\mathcal{H}}\right),
\end{equation}
\begin{equation}\label{est2}
\int_1^2\int_0^{\infty}\mu(s)|\gamma_x|^2dsdx\leq \kappa_2  \left(\|U\|_{\mathcal{H}}\|F\|_{\mathcal{H}}+\|F\|^2_{\mathcal{H}}\right)
\end{equation}
and
\begin{equation}\label{est3}
\int_1^2|\zeta_x|^2dx\leq \kappa_3   \left(\|U\|_{\mathcal{H}}\|F\|_{\mathcal{H}}+\|F\|^2_{\mathcal{H}}\right),
\end{equation}
where  $\displaystyle\kappa_0=\frac{2}{cm}$, $\displaystyle\kappa_1=\frac{1}{c(1-m)}$, $\displaystyle\kappa_2=\frac{2}{cmK_{\mu}}$,  and $\displaystyle\kappa_3=2c(1-m)+\frac{4cmg(0)}{K_{\mu}}$.
\end{Lemma}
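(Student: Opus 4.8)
The plan is to reproduce, in quantitative form, the dissipation argument behind Lemma \ref{lemma1}. First I would pair the resolvent equation \eqref{pol3} with $U$ in $\mathcal{H}$ and take real parts. Since $\la$ is real, $\Re\langle i\la U,U\rangle_{\mathcal{H}}=0$, so that $-\Re\langle\mathcal{A}U,U\rangle_{\mathcal{H}}=\Re\langle F,U\rangle_{\mathcal{H}}$. Inserting the expression for $\Re\langle\mathcal{A}U,U\rangle_{\mathcal{H}}$ already computed in Proposition \ref{m-dissipative} (whose boundary terms cancel precisely because $U\in D(\mathcal{A})$ satisfies the transmission conditions), and bounding the right-hand side by Cauchy--Schwarz, I obtain the master inequality
\[
\frac{cm}{2}\int_1^2\int_0^{\infty}\bigl(-\mu^{\prime}(s)\bigr)|\gamma_x|^2\,ds\,dx+c(1-m)\int_1^2|y_x|^2\,dx\le \|F\|_{\mathcal{H}}\|U\|_{\mathcal{H}}.
\]

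Both terms on the left are nonnegative, and since $m\in(0,1)$ the coefficients $cm$ and $c(1-m)$ are strictly positive. Discarding one term and dividing by the coefficient of the other then yields \eqref{est0} with $\kappa_0=\tfrac{2}{cm}$ and \eqref{est1} with $\kappa_1=\tfrac{1}{c(1-m)}$ at once; the additional $\|F\|_{\mathcal{H}}^2$ summand in the stated right-hand sides is harmless slack, since $\|U\|_{\mathcal{H}}\|F\|_{\mathcal{H}}\le \|U\|_{\mathcal{H}}\|F\|_{\mathcal{H}}+\|F\|_{\mathcal{H}}^2$. Estimate \eqref{est2} is then immediate from \eqref{est0}: the Dafermos condition in \eqref{H} gives $\mu(s)\le -\mu^{\prime}(s)/K_\mu$, whence $\int_1^2\int_0^{\infty}\mu(s)|\gamma_x|^2\,ds\,dx\le \tfrac{1}{K_\mu}\int_1^2\int_0^{\infty}(-\mu^{\prime})|\gamma_x|^2\,ds\,dx$, producing $\kappa_2=\tfrac{2}{cmK_\mu}$.

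For \eqref{est3} I would start from the definition $\zeta=c(1-m)y+cm\int_0^{\infty}\mu(s)\gamma(s)\,ds$ in \eqref{zeta_function}, differentiate in $x$, and apply $(p+q)^2\le 2p^2+2q^2$ together with the Cauchy--Schwarz inequality $\bigl|\int_0^{\infty}\mu(s)\gamma_x(s)\,ds\bigr|^2\le g(0)\int_0^{\infty}\mu(s)|\gamma_x|^2\,ds$ (using $\int_0^{\infty}\mu=g(0)$). This reproduces the bound \eqref{ineq4}, into which I substitute \eqref{est1} and \eqref{est2}; collecting the two resulting multiples of $\|U\|_{\mathcal{H}}\|F\|_{\mathcal{H}}$ gives exactly $\kappa_3=2c(1-m)+\tfrac{4cmg(0)}{K_\mu}$.

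The argument carries no real obstacle, being the resolvent analogue of Lemma \ref{lemma1}: because the polynomial regime restricts $m$ strictly to $(0,1)$, both $cm$ and $c(1-m)$ stay bounded away from $0$, so the delicate $m=1$ sub-argument of Lemma \ref{lemma1} (differentiating \eqref{pro8} and using $g(0)$) is never invoked, and \eqref{est1} drops out of the master inequality directly. The only genuine care is bookkeeping the constants $\kappa_0,\dots,\kappa_3$ and confirming that the boundary contributions in $\Re\langle\mathcal{A}U,U\rangle_{\mathcal{H}}$ vanish on $D(\mathcal{A})$; the true analytic difficulty is postponed to the later lemmas, which must control $\int_0^1\eta|u_x|^2\,dx$ and extract the sharp $|\la|^{1/2}$ dependence in \eqref{pol8}.
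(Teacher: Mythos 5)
Your proposal is correct and follows essentially the same route as the paper: the same master dissipation inequality obtained by pairing \eqref{pol3} with $U$ in $\mathcal{H}$, the same use of the Dafermos condition to pass from $-\mu'$ to $\mu$, and the same Cauchy--Schwarz treatment of $\zeta_x$ via \eqref{zeta_function}, with all four constants $\kappa_0,\dots,\kappa_3$ matching. No gaps.
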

\begin{proof}
First taking the inner product of \eqref{pol3} with $U$ in $\mathcal{H}$, we get
\begin{equation}\label{p1}
\frac{cm}{2}\int_1^2\int_0^{\infty}-\mu^{\prime}(s)|\gamma_x|^2dsdx+c(1-m)\int_1^2|y_x|^2dx=-\Re(\left<\mathcal{A}U,U\right>_{\mathcal{H}})\leq  \|F\|_{\mathcal{H}}\|U\|_{\mathcal{H}}.
\end{equation}
From the above equation, using condition \eqref{H} and the fact that  $m\in(0,1)$, we obtain \eqref{est0},  \eqref{est1} and 
\begin{equation*}
\int_1^2\int_0^{\infty} \mu(s)|\gamma_x|^2dsdx\leq\frac{1}{K_{\mu}} \int_1^2\int_0^{\infty}- \mu^{\prime}(s)|\gamma_x|^2dsdx.
\end{equation*}
Then, using the above inequality and \eqref{est0},  we obtain \eqref{est2}.  Now, using  the Cauchy-Schwarz inequality, we get
\begin{equation*}
\begin{array}{ll}
\displaystyle
\int_1^2|\zeta_x|^2dx &\leq  \displaystyle 2c^2(1-m)^2\int_1^2|y_x|^2dx+2c^2m^2\left(\int_0^{\infty}\mu(s)ds\right)\int_1^2\int_0^{\infty}\mu(s)|\gamma_x|^2dsdx\\
 &\leq\displaystyle  2c^2(1-m)^2\int_1^2|y_x|^2dx+2c^2m^2g(0)\int_1^2\int_0^{\infty}\mu(s)|\gamma_x|^2dsdx.
\end{array}
\end{equation*}
Using \eqref{est1} and \eqref{est2} in the above inequality we obtain \eqref{est3} and the proof is complete.
\end{proof}
\begin{Lemma}\label{lem2_pol}
Assume condition \eqref{H}, Hypothesis \ref{Condition1},  $m\in(0,1)$ and $|\la|\geq 1$. Then, the solution $(u, v, y, \gamma)^{\top}\in D(\mathcal{A})$ of \eqref{pol3} satisfies the following estimates
\begin{equation}\label{est4}
\int_1^2|y|^2dx\leq \frac{ \kappa_4}{|\la|}  \left(\|U\|_{\mathcal{H}}\|F\|_{\mathcal{H}}+\|F\|^2_{\mathcal{H}}\right),
\end{equation}
where $\displaystyle\kappa_4$ is a constant independent of $\la$ to be determined below.
\end{Lemma}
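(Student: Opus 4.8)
The plan is to test the heat equation \eqref{pol6} against $\bar y$ and to read $\int_1^2|y|^2dx$ off the imaginary part, since the coefficient $i\la$ in front of $\int_1^2|y|^2dx$ is precisely what will produce the extra power of $|\la|$ compared with the crude bound $\int_1^2|y|^2dx\lesssim\int_1^2|y_x|^2dx$ obtained from \eqref{est1} by Poincar\'e. Concretely, I would multiply \eqref{pol6} by $\bar y$, integrate over $(1,2)$, and integrate $\int_1^2\zeta_{xx}\bar y\,dx$ by parts. Because $y(2)=0$ and, by the transmission condition in \eqref{system1}, $\eta(1)u_x(1)=\zeta_x(1)$, the only surviving boundary contribution is $-\zeta_x(1)\bar y(1)$, leaving
\begin{equation*}
i\la\int_1^2|y|^2dx=-\zeta_x(1)\bar y(1)-\int_1^2\zeta_x\overline{y_x}\,dx+\int_1^2 f^3\bar y\,dx .
\end{equation*}
Taking imaginary parts and absolute values then bounds $|\la|\int_1^2|y|^2dx$ by the three terms on the right.

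Two of these terms are routine. By Cauchy--Schwarz and Lemma \ref{lem1_pol}, $\bigl|\int_1^2\zeta_x\overline{y_x}\,dx\bigr|\le\|\zeta_x\|_{L^2(1,2)}\|y_x\|_{L^2(1,2)}\le\sqrt{\kappa_1\kappa_3}\,(\|U\|_{\mathcal H}\|F\|_{\mathcal H}+\|F\|^2_{\mathcal H})$ by \eqref{est1} and \eqref{est3}, while $\bigl|\int_1^2 f^3\bar y\,dx\bigr|\le\|F\|_{\mathcal H}\,\|y\|_{L^2(1,2)}$ is split by Young's inequality (using $|\la|\ge1$) into a small multiple of $|\la|\int_1^2|y|^2dx$, to be reabsorbed on the left, plus a multiple of $\|F\|^2_{\mathcal H}$. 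After dividing by $|\la|$, these contribute exactly the advertised $\tfrac1{|\la|}(\|U\|_{\mathcal H}\|F\|_{\mathcal H}+\|F\|^2_{\mathcal H})$, so the constant coming from here feeds into $\kappa_4$.

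The delicate point — and the step I expect to be the genuine obstacle — is the boundary term $\zeta_x(1)\bar y(1)$. For the factor $\bar y(1)$ I would use $|y(1)|\le\int_1^2|y_x|\,dx\le\|y_x\|_{L^2(1,2)}$, which by \eqref{est1} is of size $(\|U\|_{\mathcal H}\|F\|_{\mathcal H}+\|F\|^2_{\mathcal H})^{1/2}$. The trace $\zeta_x(1)$ I would estimate by the one-dimensional Gagliardo--Nirenberg inequality already exploited in \eqref{x5}, namely $|\zeta_x(1)|\le c_1\|\zeta_{xx}\|^{1/2}\|\zeta_x\|^{1/2}+c_2\|\zeta_x\|$: here $\|\zeta_x\|$ is controlled by \eqref{est3}, but $\|\zeta_{xx}\|=\|i\la y-f^3\|\le|\la|\,\|y\|_{L^2(1,2)}+\|F\|_{\mathcal H}$ carries a factor $|\la|$. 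The whole difficulty is to feed this $|\la|$ back into the left-hand side without spoiling the $\tfrac1{|\la|}$ gain; I would insert these bounds, write $\|\zeta_{xx}\|^{1/2}\le|\la|^{1/2}\|y\|^{1/2}_{L^2(1,2)}+\|F\|^{1/2}_{\mathcal H}$, and apply Young's inequality with exponents tuned so that the power of $\int_1^2|y|^2dx$ produced is exactly reabsorbed after division by $|\la|$, the residual terms being multiples of $\|U\|_{\mathcal H}\|F\|_{\mathcal H}+\|F\|^2_{\mathcal H}$. Should the heat-only trace estimate prove too lossy to reach the full power $|\la|^{-1}$, the fallback is to convert the boundary term through the transmission relations, writing $\zeta_x(1)\bar y(1)=\eta(1)u_x(1)\overline{v(1)}$ with $v(1)=y(1)$ and $v=i\la u-f^1$ from \eqref{pro4}, and to close the estimate by coupling it to the wave-side trace bounds for $u_x(1)$ and $\la u(1)$; balancing this coupling by Young's inequality, and fixing $\kappa_4$ at the very end, is where the real work of the lemma lies.
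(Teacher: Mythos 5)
Your identity and the two routine terms coincide with the paper's own proof: multiplying \eqref{pol6} by $\overline{y}$ and taking imaginary parts is the same computation as the paper's multiplication by $\frac{1}{i\la}\overline{y}$ followed by taking real parts, and your bounds for $\int_1^2\zeta_x\overline{y_x}\,dx$ and $\int_1^2 f^3\overline{y}\,dx$ are exactly \eqref{p4} and \eqref{p5}. The genuine gap is in the boundary term, and as you have set it up it is fatal to the stated rate. Write $D:=\|U\|_{\mathcal H}\|F\|_{\mathcal H}+\|F\|^2_{\mathcal H}$ and let $\|\cdot\|$ denote the $L^2(1,2)$ norm. With your crude trace bound $|y(1)|\le\|y_x\|$ and Gagliardo--Nirenberg only for $\zeta_x(1)$, after inserting $\|\zeta_{xx}\|^{1/2}\le|\la|^{1/2}\|y\|^{1/2}+\|f^3\|^{1/2}$ the dangerous contribution to $\frac{1}{|\la|}|\zeta_x(1)|\,|y(1)|$ is a constant multiple of
\begin{equation*}
\frac{1}{|\la|^{1/2}}\,\|y\|^{1/2}\,\|\zeta_x\|^{1/2}\,\|y_x\|\;\le\;\frac{\kappa_3^{1/4}\kappa_1^{3/4}}{|\la|^{1/2}}\,\|y\|^{1/2}\,D^{3/4},
\end{equation*}
by \eqref{est1} and \eqref{est3}. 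Any Young splitting that produces the absorbable term $\epsilon\|y\|^2$ must raise $\|y\|^{1/2}$ to the fourth power, so the conjugate exponent is $4/3$ and the complementary term is $\bigl(D^{3/4}|\la|^{-1/2}\bigr)^{4/3}=D\,|\la|^{-2/3}$. No tuning of exponents can do better: if $\|y\|^2\sim D|\la|^{-2/3}$, then $\frac{1}{|\la|^{1/2}}\|y\|^{1/2}D^{3/4}\sim D|\la|^{-2/3}$, so the inequality $\frac{1}{|\la|^{1/2}}\|y\|^{1/2}D^{3/4}\le\epsilon\|y\|^2+\frac{C}{|\la|}D$ fails for large $|\la|$ once $\epsilon$ is below the prefactor. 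Hence your main route proves \eqref{est4} only with $|\la|^{-2/3}$ in place of $|\la|^{-1}$, and this loss propagates through Lemmas \ref{lem3_pol} and \ref{lem4_pol}, degrading the resolvent bound and the final $t^{-4}$ rate.

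The repair is precisely what the paper does: estimate \emph{both} traces by Gagliardo--Nirenberg, keeping a half power of $\|y\|$ also in $|y(1)|\le\alpha_3\|y_x\|^{1/2}\|y\|^{1/2}+\alpha_4\|y\|$, alongside $|\zeta_x(1)|\le\alpha_1\|\zeta_{xx}\|^{1/2}\|\zeta_x\|^{1/2}+\alpha_2\|\zeta_x\|$. Then the worst product becomes $\frac{1}{|\la|}\cdot|\la|^{1/2}\|y\|^{1/2}\|\zeta_x\|^{1/2}\cdot\|y_x\|^{1/2}\|y\|^{1/2}=\frac{1}{|\la|^{1/2}}\|y\|\,\|\zeta_x\|^{1/2}\|y_x\|^{1/2}$, carrying exactly one full power of $\|y\|$; Young with exponents $(2,2)$ yields $\epsilon\|y\|^2+\frac{C}{|\la|}\|\zeta_x\|\|y_x\|\le\epsilon\|y\|^2+\frac{C'}{|\la|}D$, which is the required $|\la|^{-1}$ gain. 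Finally, your fallback of converting the interface term to the wave side via $\zeta_x(1)\overline{y}(1)=\eta(1)u_x(1)\overline{v(1)}$ cannot rescue the argument: at this stage there are no independent wave-side trace bounds to couple to. In the paper the estimates for $|\zeta_x(1)|$, $|y(1)|$ of Lemma \ref{lem3_pol} are deduced \emph{from} the present lemma and only then transmitted to $|u_x(1)|$, $|\la u(1)|$ in Lemma \ref{lem4_pol}; moreover, in the wave-side identity \eqref{p20new} of Lemma \ref{lemmaAppendix} the traces $\frac{1}{\sigma(1)}|\la u(1)|^2$ and $\eta(1)|u_x(1)|^2$ appear on the side being bounded, not the bounding side. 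The wave part is conservative, so all decay information must flow from the heat dissipation across the interface, and any attempt to run it backwards is circular.
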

\begin{proof}
Multiplying equation \eqref{pol6} by $\displaystyle\frac{1}{i\la}\overline{y}$, integrating over $(1,2)$ and taking the real part, we have
\begin{equation}\label{p3}
\int_1^2|y|^2dx+\Re\left(\frac{1}{i\la} \int_1^2\zeta_x\overline{y}_xdx\right)+\Re\left[\frac{1}{i\la}\zeta_x(1)\overline{y}(1)\right]=\Re\left(\frac{1}{i\la}\int_1^2 f^3\overline{y}dx\right).
\end{equation}
Thanks to \eqref{est1} and \eqref{est3},  we get
\begin{equation}\label{p4}
\left|\Re\left(\frac{1}{i\la} \int_1^2\zeta_x\overline{y}_xdx\right)\right| \leq \frac{\kappa_5}{|\la|} \left(\|U\|_{\mathcal{H}}\|F\|_{\mathcal{H}}+\|F\|^2_{\mathcal{H}}\right),
\end{equation}
and 
\begin{equation}\label{p5}
\left|\Re\left(\frac{1}{i\la}\int_1^2 f^3\overline{y}dx\right)\right| \leq \frac{1}{|\la|} \left(\|U\|_{\mathcal{H}}\|F\|_{\mathcal{H}}+\|F\|^2_{\mathcal{H}}\right),
\end{equation}
where $\displaystyle\kappa_5=\sqrt{\kappa_1\kappa_3}$.
Now, using the Gagliardo-Nirenberg inequality we can estimate the term $\displaystyle\Re\left[\frac{1}{i\la}\zeta_x(1)\overline{y}(1)\right]$ in the following way
\begin{equation}\label{p6}
\begin{array}{ll}
\displaystyle
|\Re\left[\frac{1}{i\la}\zeta_x(1)\overline{y}(1)\right]|\leq \frac{1}{|\la|}\left(\alpha_1\|\zeta_{xx}\|^{\frac{1}{2}}\|\zeta_{x}\|^{\frac{1}{2}}+\alpha_2\|\zeta_{x}\|\right)\left(\alpha_3\|y_{x}\|^{\frac{1}{2}}\|y\|^{\frac{1}{2}}+\alpha_4\|y\|\right)\\
\displaystyle
\leq\frac{\beta}{|\la|}\left(\|\zeta_{xx}\|^{\frac{1}{2}}\|\zeta_{x}\|^{\frac{1}{2}}+\|\zeta_{x}\|\right)\left(\|y_{x}\|^{\frac{1}{2}}\|y\|^{\frac{1}{2}}+\|y\|\right),
\end{array}
\end{equation} 
where $\displaystyle\beta=\max(\alpha_1,\alpha_2)\max(\alpha_3,\alpha_4)$, $\alpha_i>0$, $i=1,2,3,4$. 
In order to estimate the terms in \eqref{p6}, we will use \eqref{pol6}, the fact that  $|\la|\geq 1$ and the inequality $\displaystyle x_1x_2\leq \epsilon x_1^2+\frac{1}{4\epsilon}x_2^2$ for $\epsilon>0$. Thus, we obtain

\begin{equation*}\label{p7}
 \begin{array}{ll}
 \displaystyle \bullet\,
 \frac{\beta}{|\la|}\|\zeta_{xx}\|^{\frac{1}{2}}\|\zeta_{x}\|^{\frac{1}{2}}\|y\|\leq \frac{\beta}{|\la|^{1/2}}\|y\|^{\frac{1}{2}} \|\zeta_{x}\|^{\frac{1}{2}} \|y\|+\frac{\beta}{|\la|} \|f^3\|^{\frac{1}{2}} \|\zeta_{x}\|^{\frac{1}{2}} \|y\| \leq \frac{19\beta^2}{4|\la|}\|\zeta_x\|  \|y\|+ \frac{1}{19} \|y\|^2
 \\
  \displaystyle
  +\frac{19\beta^2}{4|\la|^2}  \|f^3\|  \|\zeta_x\| +\frac{1}{19} \|y\|^2
 \leq 
\frac{19^3\beta^4}{16|\la|^2}  \|\zeta_x\|^2+\frac{1}{76} \|y\|^2+\frac{2}{19} \|y\|^2+\frac{19^2\beta^4}{8|\la|^2}\|f^3\|^2+\frac{1}{2|\la|^2} \|\zeta_x\|^2
\\
  \displaystyle
\leq 
\frac{9}{76} \|y\|^2+\frac{19^2\beta^4}{8|\la|^2}\|f^3\|^2+\left(\frac{1}{2}+\frac{19^3\beta^4}{16}\right)\frac{1}{|\la^2|}\|\zeta_x\|^2
 \leq  \frac{9}{76} \|y\|^2+ \frac{\kappa_6}{|\la|}\left(\|U\|_{\mathcal{H}}\|F\|_{\mathcal{H}}+\|F\|^2_{\mathcal{H}}\right),\\
\\
%\end{array}
 
% \begin{array}{ll}
 \displaystyle\bullet\,
 \frac{\beta}{|\la|}\|\zeta_{xx}\|^{\frac{1}{2}}\|\zeta_{x}\|^{\frac{1}{2}} \|y_x\|^{\frac{1}{2}}\|y\|^{\frac{1}{2}}\leq \frac{\beta}{|\la|^{1/2}}\|y\| \|\zeta_{x}\|^{\frac{1}{2}} \|y_x\|^{\frac{1}{2}}+\frac{\beta}{|\la|} \|f^3\|^{\frac{1}{2}} \|\zeta_{x}\|^{\frac{1}{2}} \|y_x\|^{\frac{1}{2}}\|y\|^{\frac{1}{2}} 
\\ \displaystyle
\leq 
\frac{1}{19} \|y\|^2+\frac{19\beta^2}{4|\la|} \|\zeta_{x}\|  \|y_x\|+ \frac{\beta^2}{2|\la|}\|f^3\| \|y\|+ \frac{1}{2|\la|} \|\zeta_{x}\|  \|y_{x}\|\leq \frac{1}{19}\|y\|^2+\frac{\kappa_7}{|\la|}\left(\|U\|_{\mathcal{H}}\|F\|_{\mathcal{H}}+\|F\|^2_{\mathcal{H}}\right),\\
\\
 \displaystyle\bullet\,
\frac{\beta}{|\la|}\|\zeta_x\|\|y\|\leq \frac{19\beta^2}{4|\la|^2} \|\zeta_x\|^2+\frac{1}{19}\|y\|^2
\leq \frac{1}{19}\|y\|^2+ \frac{\kappa_8}{|\la|} \left(\|U\|_{\mathcal{H}}\|F\|_{\mathcal{H}}+\|F\|^2_{\mathcal{H}}\right),\\
\\
 \displaystyle\bullet\,
 \frac{\beta}{|\la|} \|\zeta_x\| \|y_x\|^{\frac{1}{2}}\|y\|^{\frac{1}{2}}\leq \frac{\beta^2}{2|\la|} \|\zeta_x\|^2+\frac{1}{2|\la|}\|y_x\| \|y\|\leq \frac{\beta^2}{2|\la|}\|\zeta_x\|^2+\frac{19}{8|\la|^2}\|y_x\|^2+\frac{1}{38}\|y\|^2\\
 \displaystyle
 \leq \frac{1}{38}\|y\|^2+\frac{\kappa_9}{|\la|} \left(\|U\|_{\mathcal{H}}\|F\|_{\mathcal{H}}+\|F\|^2_{\mathcal{H}}\right),
 \end{array}
 \end{equation*}
 where $ \displaystyle\kappa_6=\left(\frac{1}{2}+\frac{19^3\beta^4}{16}\right)\kappa_3+\frac{19^2\beta^4}{8}$,  $ \displaystyle\kappa_7=\left(\frac{19\beta^2}{4}+\frac{1}{2}\right)\sqrt{\kappa_1\kappa_3}+\frac{\beta^2}{2}$,  $\displaystyle\kappa_8= \frac{19\beta^2\kappa_3}{4} $ and $\displaystyle\kappa_9= \frac{\beta^2\kappa_3}{2}+\frac{19\kappa_1}{8}$.\\
Inserting the above inequalities in \eqref{p6}, we obtain
 \begin{equation*}\label{p11}
\left |\Re\left[\frac{1}{i\la}\zeta_x(1)\overline{y}(1)\right]\right|\leq  \frac{1}{4}\|y\|^2+ \frac{\kappa_{10}}{|\la|}\left(\|U\|_{\mathcal{H}}\|F\|_{\mathcal{H}}+\|F\|^2_{\mathcal{H}}\right),
 \end{equation*}
 where $ \displaystyle \kappa_{10}=\sum_{{i=6}}^9\kappa_i $.
 Finally, using the above inequality,  \eqref{p4} and \eqref{p5}  in \eqref{p3}, we obtain \eqref{est4} with $\displaystyle\kappa_4=\frac{4(\kappa_{10}+\kappa_5+1)}{3}$.
\end{proof}
\begin{Lemma}\label{lem3_pol}
Assume condition \eqref{H} , Hypothesis \ref{Condition1},  $m\in(0,1)$ and $|\la|\geq 1$. Then, the solution $(u, v, y, \gamma)^{\top}\in D(\mathcal{A})$ of \eqref{pol3} satisfies the following estimates
\begin{equation}\label{est5}
|\zeta_x(1)|^2\leq \kappa_{11}(1+|\la|^{\frac{1}{2}}) \left(\|U\|_{\mathcal{H}}\|F\|_{\mathcal{H}}+\|F\|^2_{\mathcal{H}}\right),
\end{equation}
and
\begin{equation}\label{est6}
|y(1)|^2\leq \kappa_{12}(1+|\la|^{{\frac{1}{2}}}) \left(\|U\|_{\mathcal{H}}\|F\|_{\mathcal{H}}+\|F\|^2_{\mathcal{H}}\right),
\end{equation}
where $\kappa_{11},  \kappa_{12}$ are constants independent of $\la$ to be determined.
\end{Lemma}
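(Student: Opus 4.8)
The strategy is to extract the two boundary traces from the interior $L^2$-bounds of Lemmas \ref{lem1_pol} and \ref{lem2_pol} through the one-dimensional Gagliardo-Nirenberg (trace) inequality on $(1,2)$, exactly as in Step 1 of the proof of Lemma \ref{lemma2}. The only novelty is that we now keep explicit track of the powers of $|\la|$, and the factor $|\la|^{\frac12}$ in \eqref{est5} will come precisely from controlling $\zeta_{xx}$ through the resolvent equation \eqref{pol6}.

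First I would bound $\zeta_x(1)$. The Gagliardo-Nirenberg inequality applied to $\zeta_x$ yields
\[
|\zeta_x(1)|\le c_1\|\zeta_{xx}\|^{\frac12}\|\zeta_x\|^{\frac12}+c_2\|\zeta_x\|.
\]
The norm $\|\zeta_x\|$ is already controlled by \eqref{est3}. To bound $\|\zeta_{xx}\|$ I would use \eqref{pol6} in the form $\zeta_{xx}=i\la y-f^3$, so that
\[
\|\zeta_{xx}\|^2\le 2|\la|^2\|y\|^2+2\|f^3\|^2 .
\]
The crucial step is to insert \eqref{est4}, which gives $\|y\|^2\le \frac{\kappa_4}{|\la|}(\|U\|_{\mathcal H}\|F\|_{\mathcal H}+\|F\|^2_{\mathcal H})$; hence $|\la|^2\|y\|^2\le |\la|\kappa_4(\|U\|_{\mathcal H}\|F\|_{\mathcal H}+\|F\|^2_{\mathcal H})$, and since $|\la|\ge 1$ the term $\|f^3\|^2\le\|F\|^2_{\mathcal H}$ is absorbed, so that $\|\zeta_{xx}\|^2\lesssim |\la|(\|U\|_{\mathcal H}\|F\|_{\mathcal H}+\|F\|^2_{\mathcal H})$. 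Feeding these two bounds into the Gagliardo-Nirenberg estimate and squaring, the product $\|\zeta_{xx}\|\,\|\zeta_x\|$ contributes exactly one factor $|\la|^{\frac12}$ while the $\|\zeta_x\|^2$ term is $|\la|$-independent, so that multiplying the bounds directly yields \eqref{est5} with a constant $\kappa_{11}$ assembled from $c_1,c_2,\kappa_3,\kappa_4$ and the bound for $\|f^3\|$.

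Next I would treat $y(1)$ identically, starting from
\[
|y(1)|\le c_3\|y_x\|^{\frac12}\|y\|^{\frac12}+c_4\|y\| ,
\]
and bounding $\|y_x\|$ by \eqref{est1} and $\|y\|$ by \eqref{est4}. Because $\|y\|^2$ carries the favourable factor $|\la|^{-1}$, both products in fact decay in $|\la|$, and squaring gives $|y(1)|^2\lesssim |\la|^{-\frac12}(\|U\|_{\mathcal H}\|F\|_{\mathcal H}+\|F\|^2_{\mathcal H})$. Since $|\la|\ge 1$, this is in particular dominated by $\kappa_{12}(1+|\la|^{\frac12})(\|U\|_{\mathcal H}\|F\|_{\mathcal H}+\|F\|^2_{\mathcal H})$, which is \eqref{est6}; the stated bound is thus deliberately loose, matching the uniform formulation of \eqref{est5}.

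The only delicate point is the bookkeeping of the $|\la|$-powers: one must use the $|\la|$-weighted estimate \eqref{est4} rather than an unweighted bound on $\|y\|$, because it is precisely the cancellation of one power of $|\la|$ in $|\la|^2\|y\|^2$ that turns the naive $|\la|$ into the sharp $|\la|^{\frac12}$ appearing in \eqref{est5}. Throughout I would keep the right-hand side in the grouped form $\|U\|_{\mathcal H}\|F\|_{\mathcal H}+\|F\|^2_{\mathcal H}$, as in the preceding lemmas, so that no intermediate inequality needs to be re-expanded and the constants $\kappa_{11},\kappa_{12}$ remain manifestly independent of $\la$.
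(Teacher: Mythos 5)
Your proof is correct, but it follows a genuinely different route from the paper's. For \eqref{est5} the paper does not use the trace interpolation at all: it multiplies \eqref{pol6} by the Rellich-type multiplier $2(x-2)\overline{\zeta}_x$, integrates over $(1,2)$ and takes real parts, so that $|\zeta_x(1)|^2$ appears as an exact boundary term in an identity; the factor $|\la|^{\frac12}$ then comes from estimating $\Re\bigl(2i\la\int_1^2(x-2)\overline{\zeta}_x y\,dx\bigr)$ by $2|\la|\,\|\zeta_x\|\,\|y\|$ and invoking \eqref{est3} together with the $|\la|$-weighted bound \eqref{est4} — the same cancellation you exploit, but packaged as $|\la|\cdot|\la|^{-\frac12}$ rather than through $\|\zeta_{xx}\|$. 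For \eqref{est6} the paper's argument is structurally different from yours: it differentiates the memory equation \eqref{pol7} in $x$, multiplies by $\mu(s)\overline{y}$, integrates over $(1,2)\times(0,\infty)$, and integrates by parts in $s$ using $\gamma(\cdot,0)=0$ and condition \eqref{H}, so that $\tfrac{g(0)}{2}|y(1)|^2$ emerges from the term $-g(0)\Re\int_1^2 y_x\overline{y}\,dx$ (using $y(2)=0$); the right-hand side is then controlled via $\mu(0)<\infty$, the Dafermos condition, and Lemmas \ref{lem1_pol}--\ref{lem2_pol}. Your argument replaces all of this kernel bookkeeping by a single Gagliardo--Nirenberg trace bound on $y$, fed by \eqref{est1} and \eqref{est4}, and in fact yields the slightly stronger estimate $|y(1)|^2\lesssim|\la|^{-\frac12}\left(\|U\|_{\mathcal{H}}\|F\|_{\mathcal{H}}+\|F\|^2_{\mathcal{H}}\right)$. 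Your approach is legitimate in this setting: the regularity it needs ($\zeta\in H^2(1,2)$, $y\in H^1_R(1,2)$) is part of $D(\mathcal{A})$, and the same interpolation inequality is already used by the paper in \eqref{x5} and \eqref{p6}; what you give up is only the exact multiplier identity \eqref{p12}, and what you gain is a shorter proof that bypasses the memory structure entirely for the $y(1)$ bound.
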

\begin{proof}
First, multiplying equation \eqref{pol6} by $2(x-2)\overline{\zeta}_x$, integrating over $(1,2)$ and taking the real part, we obtain
\begin{equation}\label{p12}
|\zeta_x(1)|^2=\Re\left(2i\la\int_1^2(x-2)\overline{\zeta}_xydx\right)+\int_1^2|\zeta_x|^2dx-\Re\left(2\int_1^2(x-2)f^3\overline{\zeta}_xdx \right).
\end{equation}
Thanks to \eqref{est3} and Lemma \ref{lem2_pol}, the first and last terms on the right hand side in \eqref{p12} can be estimated in the following way
\begin{equation*}\label{p13}
\begin{array}{ll}
\displaystyle\left|\Re\left(2i\la\int_1^2(x-2)\overline{\zeta}_xydx\right)\right|\leq \kappa_{13}|\la|^{\frac{1}{2}} \left(\|U\|_{\mathcal{H}}\|F\|_{\mathcal{H}}+\|F\|^2_{\mathcal{H}}\right)
\end{array}
\end{equation*}
and
\begin{equation*}\label{p14}
\begin{array}{ll}
\displaystyle
|\Re\left(2\int_1^2(x-2)f^3\overline{\zeta}_xdx \right)|  \leq   2\kappa_3^{\frac{1}{2}} \|F\|_{\mathcal{H}} (\|U\|_{\mathcal{H}}\|F\|_{\mathcal{H}}+\|F\|^2_{\mathcal{H}})^{1/2}
\leq \kappa_{14} \left(\|U\|_{\mathcal{H}}\|F\|_{\mathcal{H}}+\|F\|^2_{\mathcal{H}}\right),
\end{array}
\end{equation*}
where $\displaystyle\kappa_{13}=2\sqrt{\kappa_3\kappa_4}$ and $\displaystyle\kappa_{14}=3\sqrt{\kappa_3}$.\\
Thus, using the above two inequalities in  \eqref{p12}, we obtain
\begin{equation}
|\zeta_x(1)|^2\leq  \left(\kappa_3+\kappa_{14}+\kappa_{13}|\la|^{\frac{1}{2}}\right)\left(\|U\|_{\mathcal{H}}\|F\|_{\mathcal{H}}+\|F\|^2_{\mathcal{H}}\right)
\end{equation}
and   \eqref{est5} follows with $\displaystyle\kappa_{11}=\max\{\kappa_3+\kappa_{14},\kappa_{13}\}$.
Now, differentiating equation \eqref{pol7} with respect to $x$,  multiplying by $\mu(s)\overline{y}$ and integrating over $(1,2)\times(0,\infty)$, we obtain
\begin{equation}\label{p15}
\begin{array}{ll}
\displaystyle
\Re\left(i\la\int_1^2\int_0^{\infty}\mu(s)\gamma_x\overline{y}dsdx\right)+\Re\left(\int_1^2\int_0^{\infty}\mu(s)\gamma_{sx}\overline{y}dsdx\right)-g(0)\Re\left(\int_1^2y_x\overline{y}dx\right)\\
\displaystyle
=\Re\left(\int_1^2\int_0^{\infty}\mu(s)f^4_x\overline{y}dsdx\right).
\end{array}
\end{equation}
Integrating the second term in the above equation with respect to $s$ and using condition \eqref{H}, we obtain
\begin{equation}\label{p16}
\begin{array}{l}\displaystyle
\frac{g(0)}{2}|y(1)|^2
\displaystyle
=\Re\left(\int_1^2\int_0^{\infty}\mu(s)f^4_x\overline{y}dsdx\right)-\Re\left(\int_1^2\int_0^{\infty}(i\la\mu(s)-\mu^{\prime}(s))\gamma_x\overline{y}dsdx\right).\end{array}
\end{equation}
Thanks to Lemmas \ref{lem1_pol} and \ref{lem2_pol} and the fact that $|\la|\geq 1$ and $f^4\in W$, we can estimate the terms on the right hand side  in \eqref{p16} in the following way
\begin{equation*}\label{p17}
\begin{array}{ll}
\displaystyle
\left|\Re\left(i\la\int_1^2\int_0^{\infty}\mu(s)\gamma_x\overline{y}dsdx\right)\right| &\displaystyle \leq \sqrt{g(0)} |\la|\left(\int_1^2\int_0^{\infty}\mu(s)|\gamma_x|^2dsdx\right)^{1/2}\left(\int_1^2|y|^2dx\right)^{1/2}\\
&\displaystyle
\leq \kappa_{15}|\la|^{\frac{1}{2}}\left(\|U\|_{\mathcal{H}}\|F\|_{\mathcal{H}}+\|F\|^2_{\mathcal{H}}\right),
\end{array}
\end{equation*}
\begin{equation*}\label{p18}
\begin{array}{ll}
\displaystyle
\left|\Re\left(\int_1^2\int_0^{\infty}\mu^{\prime}(s)\gamma_{x}\overline{y}dsdx\right)\right| &\displaystyle \leq \sqrt{\mu(0)}\left(\int_1^2\int_0^{\infty}-\mu^{\prime}(s)|\gamma_x|^2dsdx\right)^{1/2}\left(\int_1^2|y|^2dx\right)^{1/2}\\[0.15 in]
&\displaystyle
\leq \kappa_{16}\left(\|U\|_{\mathcal{H}}\|F\|_{\mathcal{H}}+\|F\|^2_{\mathcal{H}}\right),
\end{array}
\end{equation*}
and
\begin{equation*}\label{p19}
\begin{array}{ll}
\displaystyle
\left|\Re\left(\int_1^2\int_0^{\infty}\mu(s)f^4_x\overline{y}dsdx\right)\right| &\displaystyle \leq \sqrt{g(0)} \left(\int_1^2\int_0^{\infty}\mu(s)|f^4_x|^2dsdx\right)^{1/2}\left(\int_1^2|y|^2dx\right)^{1/2}\\[0.15 in]
&\displaystyle
\leq \sqrt{\frac{g(0)}{cm}} \left(\|U\|_{\mathcal{H}}\|F\|_{\mathcal{H}}+\|F\|^2_{\mathcal{H}}\right),
\end{array}
\end{equation*}
where $\displaystyle\kappa_{15}=\sqrt{g(0)\kappa_2\kappa_4}$ and $\displaystyle\kappa_{16}=\sqrt{\mu(0)\kappa_0\kappa_4}$. 
Thus, using the above inequalities in \eqref{p16}, we obtain \eqref{est5} with 
$\displaystyle \kappa_{12}=\frac{2}{g(0)}\max\{\kappa_{15}, \kappa_{16}+\sqrt{g(0)c^{-1}m^{-1}}\}$ and the proof of the Lemma is complete.
\end{proof}

\noindent Now, substituting equation \eqref{pol4} into \eqref{pol5}, we get
\begin{equation}\label{pol9}
\la^2 u+\sigma(\eta u_x)_x=-(i\la f^1+f^2).
\end{equation}
Using equation \eqref{pol4} and the Hardy-Ponicar\'e inequality given in Proposition \ref{Hardy} (see the Appendix),  we get
\begin{equation}\label{lambdau}
\|\la u\|_{\frac{1}{\sigma}}\leq \|v\|_{\frac{1}{\sigma}}+\sqrt{C_{HP}}\|f^1_x\|
\leq \max\{1, c_0\}\left(\|v\|_{\frac{1}{\sigma}}+\|\sqrt{\eta}f_x^1\|\right)
\leq c_1 \left[\|U\|_{\mathcal{H}}+\|F\|_{\mathcal{H}}\right],
\end{equation}
where $c_1=\max\{1,c_0\}$ with $c_0=\sqrt{C_{HP}\max\limits_{x\in[0,1]}\eta^{-1}(x)}$.\\
\begin{Lemma}\label{lem4_pol}
Assume condition \eqref{H},  Hypothesis \ref{Condition1},  $m\in(0,1)$ and $|\la|\geq 1$. Then, the solution $(u, v, y, \gamma)^{\top}\in D(\mathcal{A})$ of \eqref{pol3} are such that
\begin{equation}\label{est7}
\int_0^1 \eta|u_x|^2dx\leq  \kappa _{17} (1+|\la|^{\frac{1}{2}})\left(\|U\|_{\mathcal{H}}\|F\|_{\mathcal{H}}+\|F\|^2_{\mathcal{H}}\right)
\end{equation}
and
\begin{equation}\label{est8}
\int_0^1 \frac{1}{\sigma}|v|^2dx\leq   \kappa _{18} (1+|\la|^{\frac{1}{2}})\left(\|U\|_{\mathcal{H}}\|F\|_{\mathcal{H}}+\|F\|^2_{\mathcal{H}}\right),
\end{equation}
where $\kappa_{17}$ and $\kappa_{18}$ are constants independent of $\la$ to be determined below.
\end{Lemma}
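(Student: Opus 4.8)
The plan is to estimate the wave energy by the same weighted multiplier used in the proof of Lemma \ref{lemma2}, but now tracking the dependence on $\la$ quantitatively. Substituting \eqref{pol4} into \eqref{pol5} yields \eqref{pol9}, namely $\la^2 u+\sigma(\eta u_x)_x=-(i\la f^1+f^2)$. I would multiply this identity by $-2\frac{x}{\sigma}\overline{u_x}+\frac{K_a}{2\sigma}\overline{u}$ and integrate by parts over $(0,1)$, exactly as in the derivation of \eqref{k1}. By Hypothesis \ref{Condition1} the two coefficients $1+\frac{K_a}{2}-M_1$ and $1-\frac{K_a}{2}-M_2$ are strictly positive, so the left-hand side controls both $\int_0^1\frac{1}{\sigma}|\la u|^2dx$ and $\int_0^1\eta|u_x|^2dx$ simultaneously. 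It then remains to bound the boundary contributions and the source contributions on the right-hand side by $(1+|\la|^{\frac{1}{2}})(\|U\|_{\mathcal{H}}\|F\|_{\mathcal{H}}+\|F\|^2_{\mathcal{H}})$.

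For the boundary terms I would use the transmission conditions. Since $\eta(1)u_x(1)=\zeta_x(1)$, the term $\eta(1)|u_x(1)|^2$ equals $\eta(1)^{-1}|\zeta_x(1)|^2$ and is controlled by \eqref{est5}. From \eqref{pol4} together with $v(1)=y(1)$ one has $\la u(1)=-i(y(1)+f^1(1))$, so $\frac{1}{\sigma(1)}|\la u(1)|^2\leq \frac{2}{\sigma(1)}(|y(1)|^2+|f^1(1)|^2)$, where $|y(1)|^2$ is controlled by \eqref{est6} and $|f^1(1)|$ by the trace bound already used in \eqref{f1n}. The mixed boundary term $\Re(\frac{K_a}{2}\eta(1)u_x(1)\overline{u}(1))$ is handled the same way after writing $\eta(1)u_x(1)=\zeta_x(1)$ and $u(1)=\la^{-1}\,\la u(1)$ (recall $|\la|\geq 1$) and applying Young's inequality. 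Each of these inherits the factor $1+|\la|^{\frac{1}{2}}$ coming from Lemma \ref{lem3_pol}, which is precisely the source of the power $\frac{1}{2}$ in \eqref{est7}--\eqref{est8}.

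The delicate point is the source term $2\Re\big(i\la\int_0^1\frac{x}{\sigma}f^1\overline{u_x}dx\big)$: estimated directly, as in \eqref{x11}, it produces a factor $|\la|$, which is too large. I would instead integrate by parts in $x$, trading $\overline{u_x}$ for $\overline{u}$ so that $\la$ gets paired with $u$ rather than with $u_x$; the resulting interior terms involve $\la u$ tested against $f^1$ and $f^1_x$, which are bounded through \eqref{lambdau} by $\|U\|_{\mathcal{H}}+\|F\|_{\mathcal{H}}$ without any loss in $\la$, while the new boundary term $i\la\sigma(1)^{-1}f^1(1)\overline{u}(1)$ is controlled as above. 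The remaining source terms $2\Re(\int_0^1\frac{x}{\sigma}f^2\overline{u_x})$, $\Re(i\frac{K_a}{2}\int_0^1\frac{1}{\sigma}f^1\la\overline{u})$ and $\Re(\frac{K_a}{2}\int_0^1\frac{1}{\sigma}f^2\overline{u})$ are estimated exactly as in \eqref{x10}--\eqref{x112new}, using Cauchy--Schwarz, the Hardy--Poincar\'e inequality of Proposition \ref{Hardy} and the monotonicity of $x/\sqrt{a}$, each being of order $\|U\|_{\mathcal{H}}\|F\|_{\mathcal{H}}+\|F\|^2_{\mathcal{H}}$. Collecting all contributions gives \eqref{est7}, together with the companion bound $\int_0^1\frac{1}{\sigma}|\la u|^2dx\leq \kappa(1+|\la|^{\frac{1}{2}})(\|U\|_{\mathcal{H}}\|F\|_{\mathcal{H}}+\|F\|^2_{\mathcal{H}})$.

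Finally, for \eqref{est8} I would use $v=i\la u-f^1$ from \eqref{pol4}, so that $\int_0^1\frac{1}{\sigma}|v|^2dx\leq 2\int_0^1\frac{1}{\sigma}|\la u|^2dx+2\int_0^1\frac{1}{\sigma}|f^1|^2dx$; the first integral is the companion bound just obtained and the second is controlled by $\|F\|^2_{\mathcal{H}}$ via the Hardy--Poincar\'e inequality, which yields \eqref{est8}. The main obstacle is thus the treatment of the $i\la f^1$ term: integrating by parts to exchange $u_x$ for $u$ and invoking \eqref{lambdau} is what keeps the growth at the admissible order $|\la|^{\frac{1}{2}}$ rather than $|\la|$.
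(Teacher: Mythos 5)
Your proposal is correct and follows essentially the same route as the paper: the combined multiplier $-2\frac{x}{\sigma}\overline{u_x}+\frac{K_a}{2\sigma}\overline{u}$ applied to \eqref{pol9} (this is exactly the identity \eqref{p20new} of Lemma \ref{lemmaAppendix}), boundary terms absorbed via the transmission conditions and Lemma \ref{lem3_pol} (the source of the $(1+|\la|^{\frac{1}{2}})$ factor), and \eqref{est8} recovered from \eqref{pol4} and the companion bound on $\|\la u\|_{\frac{1}{\sigma}}$. In particular, you correctly identified the one genuinely delicate point — integrating the $i\la f^1$ term by parts so that $\la$ pairs with $u$ and \eqref{lambdau} applies, avoiding the fatal factor $|\la|$ that the direct estimate \eqref{x11} would produce — which is precisely why the paper's identity \eqref{p20new} carries the term $-2\Re\left(i\int_0^1 \left(\frac{xf^1}{\sigma}\right)_x\la \overline{u}\,dx\right)$ in already-integrated form.
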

\begin{proof} For simplicity, we will divide the proof of this result into several steps. First of all ,we observe that thanks to  Lemma \ref{lemmaAppendix} in the Appendix, we have that equation \eqref{p20new} holds.  \\
\underline{\textbf{Step 1.}} The aim of the first step is to estimate the terms in \eqref{p20new}. \\
$\bullet$ Using the Cauchy-Schwarz inequality, the monotonicity of the function $\dfrac{x}{\sqrt{a}}$ in $(0,1]$ (being $a$ \eqref{WD} or \eqref{SD})  and the fact that $K_a<2$,  we can estimate the term $\displaystyle  2\Re\left(\int_0^1f^2\dfrac{x}{\sigma}\overline{u_x}dx\right)$ as:
\begin{equation}\label{p25}
\left|2\Re\left(\int_0^{1}\dfrac{x}{\sigma}f^2\overline{u}_xdx\right)\right|\leq 2\int_{0}^{1}\frac{x}{\sqrt{a}}\frac{1}{\sqrt{\sigma}}|f^2| \sqrt{\eta}|{u}_x|dx\leq \frac{2}{\sqrt{a(1)}}\left(\|U\|_{\mathcal{H}}\|F\|_{\mathcal{H}}+\|F\|_{\mathcal{H}}^2\right).
\end{equation}
$\bullet$ Now, consider the term $\displaystyle  2\Re\left(i\int_0^1 \left(\frac{xf^1}{\sigma}\right)_x\la \bar{u}dx\right)$.  We have
\begin{equation}\label{p26}
2\Re\left(i\int_0^1 \left(\frac{xf^1}{\sigma}\right)_x\la \overline{u}dx\right)=2\Re\left(i\int_0^1 \frac{x }{\sigma}f^1_x\la \overline{u}dx\right)+2\Re\left(i\int_0^1 \left(\frac{x}{\sigma}\right)^{\prime}f^1\la \overline{u}dx\right).
\end{equation}
Using again the monotonicity of $\dfrac{x}{\sqrt{a}}$ and \eqref{lambdau}, we obtain
\begin{equation}\label{p27}
\left|2\Re\left(i\int_0^{1} \frac{x}{\sigma}f^1_x\la \bar{u}dx\right)\right| 
\leq 2\int_0^{1} \frac{x}{\sqrt{a}} \sqrt{\eta}|f^1_x|\frac{1}{\sqrt{\sigma}}|\la u|dx\leq  \frac{2c_1}{\sqrt{a(1)}} \left(\|U\|_{\mathcal{H}}\|F\|_{\mathcal{H}}+\|F\|_{\mathcal{H}}^2\right).
\end{equation}
Moreover, to the second term in \eqref{p26} can be rewritten as
\begin{equation}\label{p28}
2\Re\left(i\int_0^1 \left(\frac{x}{\sigma}\right)^{\prime}f^1\la \bar{u}dx\right)=2\Re\left(i\int_0^1 \dfrac{1}{\sigma} f^1\la \bar{u}dx\right)+2\Re\left(i\int_0^1 \dfrac{x}{\sigma}\left(\dfrac{a^{\prime}-b}{a}\right) f^1\la \bar{u}dx\right).
\end{equation}
Using \eqref{lambdau} and the Hardy-Poincar\'e inequality,  the first term in the right hand side of \eqref{p28} can be estimated as
 \begin{equation}\label{p29}
\left|2\Re\left(i\int_0^1 \dfrac{1}{\sigma} f^1\la \bar{u}dx\right)\right|\leq 2\int_0^1 \frac{1}{\sqrt{\sigma}}|f^1| \frac{1}{\sqrt{\sigma}}|\la \bar{u}|dx\leq 2c_0c_1 \left(\|U\|_{\mathcal{H}}\|F\|_{\mathcal{H}}+\|F\|_{\mathcal{H}}^2\right).
\end{equation}
Now,  using Hypothesis \ref{Condition1}, we have the following estimate for the second term in the right hand side of \eqref{p28}:
\begin{equation}\label{p30}
\begin{array}{l}
\displaystyle
\left| 2\Re\left(i\int_0^1 \dfrac{ x}{\sigma}\left(\dfrac{a^{\prime}-b}{a}\right) f^1\la \bar{u}dx\right)\right| \leq 2M_1 \|f^1\|_{\frac{1}{\sigma}}\|\la u\|_{\frac{1}{\sigma}}\leq  c_0c_1(2+K_a)\left(\|U\|_{\mathcal{H}}\|F\|_{\mathcal{H}}+\|F\|_{\mathcal{H}}^2\right).
\end{array}
\end{equation}
Then, inserting \eqref{p27}, \eqref{p29},  and \eqref{p30} in \eqref{p26} we can conclude that
\begin{equation}\label{p31}
\left|2\Re\left(i\int_0^1 \left(\frac{xf^1}{\sigma}\right)_x\la \bar{u}dx\right)\right|\leq \left(\frac{2c_1}{\sqrt{a(1)}}+c_0c_1(4+K_a)\right) \left(\|U\|_{\mathcal{H}}\|F\|_{\mathcal{H}}+\|F\|_{\mathcal{H}}^2\right).
\end{equation}
$\bullet$ Now, using \eqref{lambdau} and the Cauchy-Schwarz and the Hardy-Poinacr\'e inequalities, we get 
\begin{equation}\label{p31*}
\frac{K_a}{2}\Re\left(i\int_0^1\frac{1}{\sigma}f^1\la \overline{u}dx\right)\leq \frac{K_a}{2}c_0c_1\left(\|U\|_{\mathcal{H}}\|F\|_{\mathcal{H}}+\|F\|_{\mathcal{H}}^2\right)
\end{equation}
and
\begin{equation}\label{p31**}
\frac{K_a}{2}\Re\left(\int_0^1\frac{1}{\sigma}f^2 \overline{u}dx\right)\leq \frac{K_a}{2}c_1\left(\|U\|_{\mathcal{H}}\|F\|_{\mathcal{H}}+\|F\|_{\mathcal{H}}^2\right).
\end{equation}
Thus, using equations \eqref{p25} and \eqref{p31}-\eqref{p31**} in \eqref{p20new}, we obtain
\begin{equation}\label{result1}
\begin{array}{l}
\displaystyle\left(1+\frac{K_a}{2}-M_1\right)\int_0^1 \frac{1}{\sigma}|\la u|^2dx+\left(1-\frac{K_a}{2}- M_2\right)\int_0^1 \eta |u_x|^2dx
\leq\kappa_{19} \left(\|U\|_{\mathcal{H}}\|F\|_{\mathcal{H}}+\|F\|_{\mathcal{H}}^2\right)\\
\displaystyle+\frac{1}{\sigma(1)}|\la u(1)|^2
+\eta(1)|u_x(1)|^2
+\frac{2}{\sigma(1)}|f^1(1)| |\la u(1)|+\frac{K_a}{2}\eta(1)|u_x(1)| |u(1)|,
\end{array}
\end{equation}
where $\displaystyle\kappa_{19}=\frac{2(c_1+1)}{\sqrt{a(1)}}+c_0c_1(4+K_a)+\frac{K}{2}c_0c_1+\frac{K}{2}c_1$.\\
\underline{\textbf{Step 2.}} The aim of this step is to show \eqref{est7}.
Using the transmission conditions and Lemma \ref{lem3_pol}, we obtain
\begin{equation}\label{p32}
\eta(1)|u_x(1)|^2\leq \frac{\kappa_{11}}{\eta(1)}(1+|\la|^{\frac{1}{2}}) \left(\|U\|_{\mathcal{H}}\|F\|_{\mathcal{H}}+\|F\|^2_{\mathcal{H}}\right)
\end{equation}
and 
\begin{equation}\label{p33}
|v(1)|^2\leq \kappa_{12}(1+|\la|^{{\frac{1}{2}}}) \left(\|U\|_{\mathcal{H}}\|F\|_{\mathcal{H}}+\|F\|^2_{\mathcal{H}}\right).
\end{equation}
Using equation \eqref{pol4}, \eqref{p33} and the fact that $\displaystyle|f^1(1)|\leq \sqrt{\max_{x\in[0,1]}\eta^{-1}}\|F\|_{\mathcal{H}}$, we obtain 
\begin{equation}\label{p34}
|\la u(1)|^2\leq 2|v(1)|^2+2|f^1(1)|^2\leq \kappa_{20} (1+|\la|^{{\frac{1}{2}}}) \left(\|U\|_{\mathcal{H}}\|F\|_{\mathcal{H}}+\|F\|^2_{\mathcal{H}}\right),
\end{equation}
where $\displaystyle\kappa_{20}=2(\kappa_{12}+\max_{x\in[0,1]}\eta^{-1})$.
Now, using Young's inequality we get
\begin{equation}\label{p35}
\frac{2}{\sigma(1)}|f^1(1)| |\la u(1)|\leq \frac{1}{\sigma(1)}|f^1(1)|^2+\frac{1}{\sigma(1)}|\la u(1)|^2\leq \kappa_{21} (1+|\la|^{\frac{1}{2}}) \left(\|U\|_{\mathcal{H}}\|F\|_{\mathcal{H}}+\|F\|^2_{\mathcal{H}}\right)
\end{equation}
and 
\begin{equation}\label{p35*}
\frac{K_a}{2}\eta(1)|u_x(1)| |u(1)|\leq \tilde{\kappa}_{21}(1+|\la|^{\frac{1}{2}}) \left(\|U\|_{\mathcal{H}}\|F\|_{\mathcal{H}}+\|F\|^2_{\mathcal{H}}\right),
\end{equation}

where $\displaystyle\kappa_{21}=\frac{1}{\sigma(1)}(\kappa_{20}+\max_{x\in[0,1]}\eta^{-1})$ and $\tilde{\kappa}_{21}=\frac{K^2}{8}\kappa_{11}+\frac{\kappa_{20}}{2}$.
Finally, substituting equations \eqref{p32}, \eqref{p34}-\eqref{p35*} into \eqref{result1}, we obtain
\begin{equation}\label{result2}
\begin{array}{l}
\displaystyle\left(1+\frac{K}{2}-M_1\right)\int_0^1 \frac{1}{\sigma}|\la u|^2dx+\left(1-\frac{K}{2}-M_2\right)\int_0^1 \eta |u_x|^2dx\\
\displaystyle
\leq\kappa_{22}(1+|\la|^{{\frac{1}{2}}})  \left(\|U\|_{\mathcal{H}}\|F\|_{\mathcal{H}}+\|F\|_{\mathcal{H}}^2\right),
\end{array}
\end{equation}
with $\displaystyle\kappa_{22}=\kappa_{19}+\frac{\kappa_{11}}{\eta(1)}+\frac{\kappa_{20}}{\sigma(1)}+\kappa_{21}+\tilde{\kappa}_{21}$.
Therefore, using Hypothesis \ref{Condition1} in \eqref{result2}, we obtain \eqref{est7} with $\kappa_{17}=\frac{\kappa_{22}}{1-\frac{K}{2}-M_2}$.\\
\underline{\textbf{Step 3.}} The aim of this step is to show \eqref{est8}. From inequality \eqref{result2}, we deduce 
\begin{equation*}
\int_0^1 \frac{1}{\sigma} |\la u|^2dx\leq \frac{\kappa_{22}}{1+\frac{K}{2}-M_1}(1+|\la|^{{\frac{1}{2}}})  \left(\|U\|_{\mathcal{H}}\|F\|_{\mathcal{H}}+\|F\|_{\mathcal{H}}^2\right).
\end{equation*}
Then,  using \eqref{pol4} and the above inequality, we get
\begin{equation*}
\int_0^1 \frac{1}{\sigma} |v|^2dx\leq 2\frac{\kappa_{22}}{1+\frac{K}{2}-M_1} (1+|\la|^{{\frac{1}{2}}})  \left(\|U\|_{\mathcal{H}}\|F\|_{\mathcal{H}}+\|F\|_{\mathcal{H}}^2\right)+2c_0^2\|F\|^2_{\mathcal{H}}.
\end{equation*}
Therefore, we reach \eqref{est8} with $\displaystyle\kappa_{18}=\frac{2\kappa_{22}}{1+\frac{K}{2}-M_1}+2c_0^2$.\\
\end{proof}

\noindent\textbf{Proof of Proposition \ref{proposition3}}.
Adding estimations \eqref{est2}, \eqref{est4}, \eqref{est7} and \eqref{est8} and using the fact that $|\la|\geq 1$, we obtain
\begin{equation}
\|U\|^2_{\mathcal{H}}\leq \kappa_{23} (1+|\la|^{{\frac{1}{2}}})  \left(\|U\|_{\mathcal{H}}\|F\|_{\mathcal{H}}+\|F\|_{\mathcal{H}}^2\right),
\end{equation}
where $\displaystyle\kappa_{23}=cm\kappa_{2}+\kappa_{4}+\kappa_{17}+\kappa_{18}$.
Thanks to Young's inequality, we have
\begin{equation*}
\|U\|^2_{\mathcal{H}}\leq \left( \kappa_{23}(1+|\la|^{{\frac{1}{2}}}) ^2+2\kappa_{23}(1+|\la|^{{\frac{1}{2}}})\right) \|F\|_{\mathcal{H}}^2,
\end{equation*}
and using the fact that $ \left( \kappa_{23}(1+|\la|^{{\frac{1}{2}}}) ^2+2\kappa_{23}(1+|\la|^{{\frac{1}{2}}})\right)\leq \left(\kappa_{23}(1+|\la|^{{\frac{1}{2}}})+1\right)^2
$, we obtain
\begin{equation}
\|U\|^2_{\mathcal{H}}\leq (\kappa_{23}+1)^2(1+|\la|^{{\frac{1}{2}}})^2 \|F\|_{\mathcal{H}}^2,
\end{equation}
Thus, $\|U\|_{\mathcal{H}}\leq K_1 \|F\|_{\mathcal{H}}$ with $K_1=\kappa_{23}+1$ and the proof of the Proposition is thus completed.\\
\textbf{Proof of Theorem \ref{polynomial Stab}}.  From Proposition \ref{proposition2}, we get \eqref{pol1}.  Next, we will prove \eqref{pol2}  by a contradiction argument. Suppose that  there exists
$( \la_n, U_n=(u^n, v,^n y^n, \gamma^n)^{\top})\subset \R^{\ast}\times D(\mathcal{A})$, with $|\la_n|\geq 1$ such that  $|\la _n|\to\infty$, $\|U_n\|_{\mathcal{H}}=1$ and there exists a sequence $F_n=(f^1_n,f_n^2, f_n^3, f_n^4)\in\mathcal{H}$ such that
$$\left(i\la_nI-\mathcal{A}\right)U_n=F_n\to 0\quad \text{in}\quad \mathcal{H}\quad\text{as}\quad n\to 0.$$
From Proposition \ref{proposition3} and taking $U=U_n$, $F=\la^{-\frac{1}{2}}F_n$ and $\la=\la_n$,  we can deduce that  $\|U_n\|_{\mathcal{H}}\to 0$, when  $|\la _n|\to\infty$, which contradicts $\|U_n\|_{\mathcal{H}}=1$. Thus, condition \eqref{pol2} holds true. The result follows from Huang-Pr\"uss Theorem (see  \cite{Huang01} and  \cite{pruss01}) and the proof is thus completed.

\begin{Remark}
For the case $m=0$, by proceeding with the same calculations as in the above section we reach same energy decay rate. 
\end{Remark}

\section{\textbf{Exponential stability in the case of Gurtin-Pipkin Heat conduction law }}\label{SectionExp}
\noindent In this section we will study the exponential stability for the system, specifically in the case of the Gurtin-Pipkin heat conduction law with the parameter $m=1$. The following theorem gives the main result of this section.
\begin{Theorem}\label{exponential_Stab}
Assume conditions \eqref{H} and Hypothesis \ref{Condition1}. Then,  the $C_0-$semigroup of contractions $\left(e^{t\mathcal{A}}\right)_{t\geq 0}$ is exponentially  stable, i.e. there exist constants $C_1\geq 1$ and $\tau_1>0$ independent of $U_0$ such that 
$$
\left\|e^{t\mathcal{A}}U_0\right\|_{\mathcal{H}}\leq C_1e^{-\tau_1 t}\|U_0\|_{\mathcal{H}},\qquad t\geq 0.
$$
\end{Theorem}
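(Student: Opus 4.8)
The plan is to characterize exponential stability through the Huang--Pr\"uss theorem \cite{Huang01, pruss01}: for a $C_0$-semigroup of contractions on $\mathcal{H}$ it suffices to verify (R1) $i\R\subset\rho(\mathcal{A})$ together with the \emph{uniform} resolvent bound (R2) $\sup_{\lambda\in\R}\|(i\lambda I-\mathcal{A})^{-1}\|_{\mathcal{L}(\mathcal{H})}<\infty$. Condition (R1) comes for free, since Proposition \ref{proposition2} is stated for every $m\in[0,1]$ and so applies with $m=1$. Hence the whole task reduces to (R2), i.e. to producing a constant $K>0$ independent of $\lambda$ such that every solution of $(i\lambda I-\mathcal{A})U=F$ with $|\lambda|\geq 1$ satisfies $\|U\|_{\mathcal{H}}\leq K\|F\|_{\mathcal{H}}$. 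This is the exact analogue of Proposition \ref{proposition3}, the difference being that the factor $(1+|\lambda|^{\frac{1}{2}})$ there must now be replaced by a constant; the entire difficulty of the exponential case lives in this upgrade.

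I would keep the bookkeeping of Section \ref{SectionPoly}, splitting $\|U\|_{\mathcal{H}}^2$ into a heat/memory block $\big(\int_1^2\int_0^\infty\mu|\gamma_x|^2,\ \int_1^2|y_x|^2,\ \int_1^2|\zeta_x|^2,\ \int_1^2|y|^2\big)$ and a wave block $\big(\int_0^1\eta|u_x|^2,\ \int_0^1\tfrac{1}{\sigma}|v|^2\big)$, and bounding each by $\|U\|_{\mathcal{H}}\|F\|_{\mathcal{H}}+\|F\|_{\mathcal{H}}^2$ with $\lambda$-independent constants. The wave block is handled exactly as in Lemma \ref{lem4_pol}: test \eqref{pol9} with the multiplier $-2\frac{x}{\sigma}\overline{u_x}+\frac{K_a}{2\sigma}\overline{u}$ and invoke Hypothesis \ref{Condition1} ($M_1<1+\frac{K_a}{2}$ and $M_2<1-\frac{K_a}{2}$) to keep the coefficients of $\int_0^1\frac{1}{\sigma}|\lambda u|^2$ and $\int_0^1\eta|u_x|^2$ strictly positive, the boundary contributions being absorbed through the transmission conditions $\eta(1)u_x(1)=\zeta_x(1)$ and $v(1)=y(1)$. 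Thus, once the heat block and the interface traces $|\zeta_x(1)|^2,\ |y(1)|^2$ are controlled uniformly, the wave block and the final inequality $\|U\|_{\mathcal{H}}^2\lesssim\|U\|_{\mathcal{H}}\|F\|_{\mathcal{H}}+\|F\|_{\mathcal{H}}^2$ (hence $\|U\|_{\mathcal{H}}\leq K\|F\|_{\mathcal{H}}$ by Young's inequality) follow as before.

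The main obstacle is precisely the heat/memory estimate at $m=1$. Setting $m=1$ removes the instantaneous diffusion $c(1-m)y_{xx}$, so the only dissipation available from $\Re\langle\mathcal{A}U,U\rangle_{\mathcal{H}}$ is $\frac{c}{2}\int_1^2\int_0^\infty(-\mu')|\gamma_x|^2\,ds\,dx$, and control of $\int_1^2|y_x|^2\,dx$ must be reconstructed from the Gurtin--Pipkin structure $\zeta=c\int_0^\infty\mu(s)\gamma(s)\,ds$ alone. The natural route — differentiate \eqref{pol7} in $x$, test against $\mu(s)\overline{y}_x$, as in the $m=1$ argument of Lemma \ref{lemma1} — produces the memory term $i\lambda\int_1^2\int_0^\infty\mu\gamma_x\overline{y}_x$, which carries a dangerous factor $\lambda$; estimating it crudely would make $\int_1^2|y_x|^2$ grow like $|\lambda|^2$. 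The crux is therefore to absorb this $\lambda$-factor by combining the Dafermos condition $\mu'\leq -K_\mu\mu$ with the identity $i\lambda\gamma=y-\gamma_s+f^4$ read off from \eqref{pol7}, so that $\int_1^2|y_x|^2\,dx$, $\int_1^2|\zeta_x|^2\,dx$ and the traces $|\zeta_x(1)|^2$, $|y(1)|^2$ are all bounded by $\|U\|_{\mathcal{H}}\|F\|_{\mathcal{H}}+\|F\|_{\mathcal{H}}^2$ with no power of $\lambda$. This is exactly where the hyperbolic Gurtin--Pipkin law outperforms the parabolic--hyperbolic Coleman--Gurtin law: with all the heat damping acting on $y_x$, the $\lambda$-powers produced by $\|\zeta_{xx}\|=\|i\lambda y-f^3\|$ in the Gagliardo--Nirenberg trace estimates cancel instead of accumulating, turning the $(1+|\lambda|^{\frac{1}{2}})$ of Lemma \ref{lem3_pol} into a constant and thereby yielding exponential rather than polynomial decay.
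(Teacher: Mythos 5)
Your overall architecture is indeed the paper's: Huang--Pr\"uss, condition (R1) borrowed from Proposition \ref{proposition2} at $m=1$, the wave block handled by the multiplier $-2\frac{x}{\sigma}\overline{u_x}+\frac{K_a}{2\sigma}\overline{u}$ together with the transmission conditions, and the correct diagnosis that the whole difficulty sits in the memory block, where the dangerous term is $i\lambda\int_1^2\int_0^\infty\mu\,\gamma_x\overline{y}_x\,ds\,dx$. But your proposed resolution --- that $\int_1^2|y_x|^2dx$, together with $\|\zeta_x\|^2$ and the traces, can be bounded by $\|U\|_{\mathcal{H}}\|F\|_{\mathcal{H}}+\|F\|_{\mathcal{H}}^2$ \emph{with no power of $\lambda$} --- is precisely what is not true, and the paper never proves it. At $m=1$ the only dissipation is $\frac{c}{2}\int_1^2\int_0^\infty(-\mu')|\gamma_x|^2ds\,dx$, and since \eqref{t4} forces $\gamma_x=\frac{1-e^{-i\lambda s}}{i\lambda}y_x+\mathcal{F}_x(s)$ (with $\mathcal{F}$ the Duhamel term in $f^4$), the dissipation only sees $\|y_x\|^2/|\lambda|^2$; the best extractable bound is that of Lemma \ref{lem2-exp}, namely $\int_1^2|y_x|^2dx\leq e_3|\lambda|^2\left(\|U\|_{\mathcal{H}}\|F\|_{\mathcal{H}}+\|F\|^2_{\mathcal{H}}\right)$, which grows like $|\lambda|^2$. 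Since $\|y_x\|$ is not part of $\|U\|_{\mathcal{H}}$ there is no bootstrap available, and for a hyperbolic memory law high-frequency solutions genuinely have $\|y_x\|\sim|\lambda|\,\|y\|$, so a $\lambda$-uniform bound on $\|y_x\|$ is false in general. Your plan takes that uniform bound as the ``crux'' to be proven and derives everything else from it; the phrase ``combine the Dafermos condition with $i\lambda\gamma=y-\gamma_s+f^4$'' does not supply a mechanism for it.

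The paper's actual route accepts the $|\lambda|^2$ growth and is organized so that $\|y_x\|$ never appears without a compensating factor $|\lambda|^{-1}$. Concretely, it solves \eqref{t4} explicitly, $\gamma=\frac{1-e^{-i\lambda s}}{i\lambda}y+\mathcal{F}(s)$ with $\mathcal{F}(s)=\int_0^s e^{-i\lambda(s-\tau)}f^4(\tau)d\tau$, and uses two kernel facts you never invoke (Propositions \ref{prop_1} and \ref{prop_2}): the Duhamel bound $\|\mathcal{F}\|_W\leq m_1\|f^4\|_W$ and the lower bound $\inf_{|\lambda|\geq1}\int_0^\infty\mu(s)|1-e^{-i\lambda s}|^2ds\geq m_2>0$. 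These give Lemma \ref{lem2-exp}; then the multiplier $\frac{1}{i\lambda}\overline{y}$ in Lemma \ref{lem3-exp} produces exactly the $|\lambda|^{-1}$ weights needed to turn that into a uniform bound on $\|y\|^2$. For the traces, the term $\Re\left(2i\lambda\int_1^2(x-2)\overline{\zeta}_x y\,dx\right)$ --- the very term whose crude estimate produced $(1+|\lambda|^{1/2})$ in Lemma \ref{lem3_pol} --- is not estimated but rewritten via $i\lambda\overline{\zeta}_x=-\mathcal{Q}\overline{y}_x+i\lambda c\int_0^\infty\mu(s)\overline{\mathcal{F}}_x(s)ds$ with $\mathcal{Q}=cg(0)-c\int_0^\infty\mu(s)e^{i\lambda s}ds$; integrating by parts then makes $cg(0)|y(1)|^2$ appear with a favorable sign on the left of \eqref{t14}, and the residual terms are $O(1/|\lambda|)$ because $\left|\int_0^\infty\mu(s)e^{i\lambda s}ds\right|\leq 2\mu(0)/|\lambda|$. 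Without this explicit-solution machinery, and without abandoning the uniform control of $\|y_x\|$ in favor of the weighted one, your outline cannot be completed.
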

\noindent According to Huang \cite{Huang01} and Pruss \cite{pruss01}, we have to check if the following conditions hold:
\begin{equation}\label{E1}\tag{${\rm E1}$}
 i\mathbb{R}\subseteq \rho\left(\mathcal{A}\right)
\end{equation}
and 
\begin{equation}\label{E2}\tag{${\rm E2}$}
\displaystyle{\sup_{\la\in \R}\|\left(i\la I-\mathcal{A}\right)^{-1}\|_{\mathcal{L}\left(\mathcal{H}\right)}=O(1).}
\end{equation}
The following proposition is a technical finding that will be used to prove Theorem \ref{exponential_Stab}.

\begin{Proposition}\label{proposition4}
Assume condition \ref{H} and Hypothesis \ref{Condition1} and let $(\lambda,U:=(u,v))\in \R^{\ast}\times D(\mathcal{A})$, with $|\lambda|\geq1$, such that
\begin{equation}\label{t0}
(i\la I-\mathcal{A})U=F:=(f^1, f^2, f^3, f^4(\cdot,s))\in\mathcal{H},
\end{equation}
i.e. 
\begin{eqnarray}
i\la u-v&=&f^1,\label{t1}\\
i\la v-\sigma(\eta u_x)_x &=&f^2 ,\label{t2}\\
i\la y -\zeta_{xx}  &=& f^3,\label{t3}\\
i\la \gamma+\gamma_s-y &=& f^4(\cdot,s) \label{t4}.
\end{eqnarray}
Then,  we have the following inequality
\begin{equation}\label{t5}
\|U\|_{\mathcal{H}}\leq \ibt{K_2}  \|F\|_{\mathcal{H}},
\end{equation}
where $K_2$ is a suitable positive constant independent of $\lambda$ to be determined below.
Here we note that in this case we have 
\begin{equation}\label{zeta m=1}
\zeta=c\int_0^{\infty}\mu(s)\gamma(s)ds.
\end{equation}
\end{Proposition}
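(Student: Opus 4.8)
The plan is to establish the uniform resolvent bound \eqref{t5} by estimating separately the heat component $(y,\gamma)$ and the wave component $(u,v)$, coupled only through the interface values at $x=1$. The wave part is handled exactly as in the proof of Lemma~\ref{lem4_pol}: the multiplier identity \eqref{p20new} of Lemma~\ref{lemmaAppendix}, combined with the Hardy--Poincar\'e inequality and Hypothesis~\ref{Condition1}, reduces the control of $\int_0^1\eta|u_x|^2\,dx$ and $\int_0^1\frac1\sigma|v|^2\,dx$ to the interface quantities $|\zeta_x(1)|^2$ and $|y(1)|^2$ (recall $\eta(1)u_x(1)=\zeta_x(1)$ and $v(1)=y(1)$). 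Hence the entire task for $m=1$ is to bound $\int_1^2|y|^2\,dx$, $|\zeta_x(1)|^2$ and $|y(1)|^2$ \emph{uniformly} in $\la$; this is precisely where the Gurtin--Pipkin law improves on Coleman--Gurtin, whose analogous bounds in Lemma~\ref{lem3_pol} carry the factor $|\la|^{1/2}$ responsible for the mere polynomial rate.

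I would start from the dissipation identity: taking the inner product of \eqref{t0} with $U$ and using $m=1$ gives $\frac{c}{2}\int_1^2\int_0^\infty(-\mu'(s))|\gamma_x|^2\,ds\,dx\le\|F\|_{\mathcal H}\|U\|_{\mathcal H}$, and the Dafermos condition in \eqref{H} then controls $\int_1^2\int_0^\infty\mu(s)|\gamma_x|^2\,ds\,dx$, and thus also $\int_1^2|\zeta_x|^2\,dx$, $\int_1^2|\zeta|^2\,dx$ and $|\zeta(1)|$, all by $\|U\|_{\mathcal H}\|F\|_{\mathcal H}$. Observe that $\int_1^2|y_x|^2\,dx$ does not appear in $\|U\|_{\mathcal H}^2$ and, unlike in the case $m\in(0,1)$, need not be estimated; the argument below is organised so as never to use it.

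The key device is the identity obtained by integrating \eqref{t4} against $\mu(s)\,ds$: since $\int_0^\infty\mu\gamma_s\,ds=-\int_0^\infty\mu'\gamma\,ds$ (by parts, using $\gamma(\cdot,0)=0$) and $c\int_0^\infty\mu\gamma\,ds=\zeta$ by \eqref{zeta m=1}, one obtains
\[
g(0)\,y=\frac{i\la}{c}\,\zeta-\int_0^\infty\mu'(s)\gamma(\cdot,s)\,ds-\int_0^\infty\mu(s)f^4(\cdot,s)\,ds,
\]
which trades the factor $i\la$ for the dissipation-controlled quantity $\zeta$. Multiplying \eqref{t3} by $\overline\zeta$, integrating over $(1,2)$ and substituting $i\la\overline\zeta$ from (the conjugate of) this identity turns $i\la\int_1^2 y\overline\zeta\,dx$ into $-cg(0)\int_1^2|y|^2\,dx$ plus terms $\int_1^2\int_0^\infty\mu'y\overline\gamma\,ds\,dx$ and $\int_1^2\int_0^\infty\mu y\overline{f^4}\,ds\,dx$, each absorbed by $\varepsilon\int_1^2|y|^2\,dx$ together with multiples of the dissipation and $\|F\|_{\mathcal H}^2$. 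Taking real parts yields $\int_1^2|y|^2\,dx\le C(\|U\|_{\mathcal H}\|F\|_{\mathcal H}+\|F\|_{\mathcal H}^2)+C|\zeta_x(1)|\,|\zeta(1)|$, uniformly in $\la$.

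For the interface terms I would follow Lemma~\ref{lem3_pol}, but kill every power of $\la$ with the same identity. Multiplying \eqref{t3} by $2(x-2)\overline\zeta_x$ expresses $|\zeta_x(1)|^2$ through $\Re\big(2i\la\int_1^2(x-2)\overline\zeta_x y\,dx\big)$, $\int_1^2|\zeta_x|^2\,dx$ and a source term; integrating by parts in $x$ and replacing $i\la\overline\zeta$ by the identity converts the $\la$-term into quantities controlled by $\int_1^2|y|^2\,dx$, $|y(1)|^2$ and the dissipation, with no surviving power of $\la$. Likewise, differentiating \eqref{t4} in $x$, multiplying by $\mu(s)\overline y$ and integrating produces $\tfrac{g(0)}{2}|y(1)|^2$, and the offending $i\la\int_1^2\int_0^\infty\mu\gamma_x\overline y\,ds\,dx$ is again rewritten through the identity (after one integration by parts in $x$) into uniformly bounded terms. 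The three resulting inequalities couple $\int_1^2|y|^2\,dx$, $|\zeta_x(1)|^2$ and $|y(1)|^2$, each boundary term entering the others with a small coefficient, so a Young-inequality bootstrap closes them into uniform bounds $\le C(\|U\|_{\mathcal H}\|F\|_{\mathcal H}+\|F\|_{\mathcal H}^2)$. Feeding these into the wave estimate and summing all contributions gives $\|U\|_{\mathcal H}^2\le C(\|U\|_{\mathcal H}\|F\|_{\mathcal H}+\|F\|_{\mathcal H}^2)$, whence \eqref{t5} by Young's inequality. The main obstacle is exactly this uniform treatment of the $i\la$ terms: the crude Young bound of Lemma~\ref{lem3_pol} costs a factor $|\la|^{1/2}$, and the exponential rate survives only because the identity $g(0)y=\tfrac{i\la}{c}\zeta-\cdots$ lets one replace $i\la\zeta$ by $y$ modulo the dissipation.
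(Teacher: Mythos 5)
Your proposal is correct, but it takes a genuinely different route from the paper's on the heat/memory side, so it is worth comparing the two. The paper (Lemmas \ref{lem1-exp}--\ref{lem5-exp}) follows Zhang's blueprint: it solves the transport equation \eqref{t4} explicitly in $s$, $\gamma=\frac{1-e^{-i\la s}}{i\la}y+\mathcal{F}(s)$, invokes the kernel lower bound $\inf_{|\la|\geq 1}\int_0^\infty\mu(s)|1-e^{-i\la s}|^2ds\geq m_2>0$ (Proposition \ref{prop_2}) and Proposition \ref{prop_1} to obtain the intermediate estimate $\|y_x\|^2\leq e_3|\la|^2\left(\|U\|_{\mathcal H}\|F\|_{\mathcal H}+\|F\|^2_{\mathcal H}\right)$ (Lemma \ref{lem2-exp}), and then cancels this $|\la|$-growth against $1/|\la|$ prefactors: in the Gagliardo--Nirenberg interface estimates of Lemma \ref{lem3-exp}, and against $\left|\int_0^\infty\mu(s)e^{i\la s}ds\right|\leq 2\mu(0)/|\la|$ in the $(x-2)\overline{\zeta}_x$ multiplier identity \eqref{t14} of Lemma \ref{lem4-exp}. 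You never solve the ODE and never estimate $\|y_x\|$: your $\mu$-integrated identity $g(0)y=\frac{i\la}{c}\zeta-\int_0^\infty\mu'(s)\gamma\,ds-\int_0^\infty\mu(s) f^4\,ds$ eliminates each factor of $\la$ outright, and you close the three heat-side quantities $\|y\|^2$, $|\zeta_x(1)|^2$, $|y(1)|^2$ by a Young bootstrap, which indeed closes (the coefficient structure $Y\leq\varepsilon Z+C_\varepsilon D$, $Z+cg(0)P\leq (cg(0)+\varepsilon')Y+CD$ permits absorption for small $\varepsilon$). This buys elementarity: Propositions \ref{prop_1} and \ref{prop_2} and the Gagliardo--Nirenberg interpolation are dispensed with entirely, and the mechanism by which $m=1$ yields a uniform (rather than $|\la|^{1/2}$-growing) bound is laid bare. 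The paper's route, in exchange, produces the extra quantitative information on $\|y_x\|$ and proceeds by a linear chain of lemmas rather than a coupled bootstrap. Your wave-part treatment coincides with the paper's Lemma \ref{lem5-exp}. Two refinements would make your argument airtight: (i) in the interface estimate, substitute the $x$-derivative of your identity for $i\la\overline{\zeta}_x$ \emph{before} integrating by parts; in that order $y_x$ only ever appears in the exactly integrable combination $\Re\int_1^2(x-2)y\overline{y}_x\,dx=\tfrac12\left(|y(1)|^2-\|y\|^2\right)$, and, as in the paper's \eqref{t14}, the term $cg(0)|y(1)|^2$ lands on the left with a favourable sign --- which incidentally makes your third identity (differentiating \eqref{t4} in $x$) redundant; if one integrates by parts first, terms such as $\int_1^2\int_0^\infty(x-2)\mu'(s)y_x\overline{\gamma}\,ds\,dx$ appear and must be integrated back; (ii) the derivation of your identity uses $\mu(s)\gamma(\cdot,s)\to 0$ as $s\to\infty$, which should be justified exactly as the paper does in its own integrations by parts in $s$ (e.g.\ \eqref{g1}, \eqref{p16}), using \eqref{H} and $\gamma\in W$.
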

Proceeding as in Lemma \ref{lem1_pol}, we have the next result.
\begin{Lemma}\label{lem1-exp}
Assume condition \eqref{H}, Hypothesis  \ref{Condition1} hold,  $m=1$ and $\ibt{|\la|\geq 1}$. Then, the solution $(u, v, y, \gamma)^{\top}\in D(\mathcal{A})$ of \eqref{t0} satisfies the following estimates
\begin{equation}\label{est0_exp}
\int_1^2\int_0^{\infty}-\mu^{\prime}(s)|\gamma_x|^2dsdx\leq e_0  \left(\|U\|_{\mathcal{H}}\|F\|_{\mathcal{H}}+\|F\|^2_{\mathcal{H}}\right),
\end{equation} 
\begin{equation}\label{est1_exp}
\int_1^2\int_0^{\infty}\mu(s)|\gamma_x|^2dsdx\leq e_1  \left(\|U\|_{\mathcal{H}}\|F\|_{\mathcal{H}}+\|F\|^2_{\mathcal{H}}\right)
\end{equation}
and
\begin{equation}\label{est2_exp}
\int_1^2|\zeta_x|^2dx\leq e_2   \left(\|U\|_{\mathcal{H}}\|F\|_{\mathcal{H}}+\|F\|^2_{\mathcal{H}}\right),
\end{equation}
where  $\displaystyle e_0=\frac{2}{c}$, $\displaystyle e_1=\frac{2}{cK_{\mu}}$,  and $\displaystyle e_2=\frac{2cg(0)}{K_{\mu}}$.
\end{Lemma}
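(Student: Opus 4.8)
The plan is to mirror the proof of Lemma \ref{lem1_pol}, specializing to the Gurtin--Pipkin case $m=1$. The essential structural change is that the instantaneous diffusion contribution $c(1-m)\int_1^2|y_x|^2dx$ vanishes, so that the dissipation identity now comes entirely from the memory kernel, and the three estimates \eqref{est0_exp}--\eqref{est2_exp} cascade from it by exactly the same chain of inequalities (dissipation $\to$ Dafermos condition $\to$ Cauchy--Schwarz). First I would take the inner product of \eqref{t0} with $U$ in $\mathcal{H}$. Using the formula for $\Re(\langle\mathcal{A}U,U\rangle_{\mathcal{H}})$ established in Proposition \ref{m-dissipative}, with $m=1$ the $y_x$-term drops out and one is left with
$$\frac{c}{2}\int_1^2\int_0^{\infty}-\mu^{\prime}(s)|\gamma_x|^2dsdx=-\Re(\langle\mathcal{A}U,U\rangle_{\mathcal{H}})\leq\|F\|_{\mathcal{H}}\|U\|_{\mathcal{H}}.$$
Bounding the right-hand side by $\|U\|_{\mathcal{H}}\|F\|_{\mathcal{H}}+\|F\|_{\mathcal{H}}^2$ yields \eqref{est0_exp} with $e_0=2/c$.

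Next, to obtain \eqref{est1_exp}, I would invoke the Dafermos condition in \eqref{H}, namely $\mu(s)\leq -K_\mu^{-1}\mu^{\prime}(s)$, integrate against $|\gamma_x|^2$ over $(1,2)\times(0,\infty)$, and then combine with \eqref{est0_exp}; this introduces the factor $1/K_\mu$ and gives $e_1=2/(cK_\mu)$. Finally, for \eqref{est2_exp} I would use the explicit representation $\zeta=c\int_0^{\infty}\mu(s)\gamma(s)ds$ from \eqref{zeta m=1}, differentiate in $x$, and apply the Cauchy--Schwarz inequality in the measure $\mu(s)\,ds$, using $\int_0^{\infty}\mu(s)ds=g(0)$, to get the pointwise-in-$x$ bound
$$|\zeta_x|^2\leq c^2g(0)\int_0^{\infty}\mu(s)|\gamma_x|^2ds.$$
Integrating over $(1,2)$ and inserting \eqref{est1_exp} then produces \eqref{est2_exp} with $e_2=2cg(0)/K_\mu$.

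I do not expect a genuine obstacle here, since the argument is a direct and slightly simpler specialization of Lemma \ref{lem1_pol}. The one point worth flagging is conceptual rather than technical: unlike the Coleman--Gurtin regime, the dissipation identity no longer controls $\int_1^2|y_x|^2dx$ on its own, because the corresponding term is absent at $m=1$. Consequently the estimate on $\|y_x\|$ cannot be read off at this stage and must instead be recovered later from the history equation \eqref{t4}, exactly as was done for the $m=1$ branch of Lemma \ref{lemma1}. That is precisely why the present lemma lists only the three estimates above and defers the $y$-bounds to a subsequent step.
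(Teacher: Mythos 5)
Your proposal is correct and follows essentially the same route as the paper, which gives no separate proof of this lemma but simply states ``Proceeding as in Lemma \ref{lem1_pol}, we have the next result''; your argument is exactly that specialization to $m=1$ (dissipation identity $\to$ Dafermos condition $\to$ Cauchy--Schwarz on $\zeta_x=c\int_0^\infty\mu(s)\gamma_x\,ds$). You also correctly recover the paper's constants, including $e_2=2cg(0)/K_\mu$, which comes from the single-term Cauchy--Schwarz bound (no factor $2$ splitting) rather than substituting $m=1$ into $\kappa_3$.
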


\begin{Lemma}\label{lem2-exp}
Assume condition \eqref{H}, Hypothesis \ref{Condition1},  $m=1$ and $\ibt{|\la|\geq 1}$. Then, the solution $(u, v, y, \gamma)^{top}\in D(\mathcal{A})$ of \eqref{t0} satisfies \begin{equation}\label{est3_exp}
\int_1^2|y_x|^2dx\leq e_3 |\la|^2  \left(\|U\|_{\mathcal{H}}\|F\|_{\mathcal{H}}+\|F\|^2_{\mathcal{H}}\right),
\end{equation}
where $e_3$ is a constant independent of $\la$ to be determined below.
\end{Lemma}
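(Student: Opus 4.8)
The plan is to reproduce the $m=1$ computation already carried out in Lemma \ref{lemma1}, but to keep track of the explicit power of $|\la|$ instead of passing to a limit. First I would differentiate equation \eqref{t4} with respect to $x$ to obtain $i\la\gamma_x+\gamma_{sx}-y_x=(f^4)_x$, multiply by $\mu(s)\overline{y_x}$, and integrate over $(1,2)\times(0,\infty)$. Since $y_x$ is independent of $s$ and $\int_0^\infty\mu(s)ds=g(0)$, the contribution of $-y_x$ is exactly $g(0)\int_1^2|y_x|^2dx$. Integrating the $\gamma_{sx}$ term by parts in $s$, the boundary terms vanish because $\gamma(\cdot,0)=0$ (from $D(\mathcal{A})$) and $\mu(s)\gamma_x\to 0$ as $s\to\infty$ (a consequence of the Dafermos condition in \eqref{H}), leaving $\int_1^2\int_0^\infty-\mu'(s)\gamma_x\overline{y_x}dsdx$. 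Taking real parts yields the exact analogue of \eqref{g1}, namely
\begin{equation*}
g(0)\int_1^2|y_x|^2dx=\Re\left(i\la\int_1^2\int_0^\infty\mu(s)\gamma_x\overline{y_x}dsdx\right)+\Re\left(\int_1^2\int_0^\infty-\mu'(s)\gamma_x\overline{y_x}dsdx\right)-\Re\left(\int_1^2\int_0^\infty\mu(s)(f^4)_x\overline{y_x}dsdx\right).
\end{equation*}

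Next I would estimate the three terms on the right, each time splitting off a copy of $\int_1^2|y_x|^2dx$ via Young's inequality with a weight chosen so that the three absorbed pieces sum to at most $\tfrac{3g(0)}{8}\int_1^2|y_x|^2dx$. For the first term, Cauchy--Schwarz (using $\int_0^\infty\mu(s)ds=g(0)$) together with \eqref{est1_exp} gives a bound of the form $\sqrt{g(0)e_1}\,|\la|\,\big(\|U\|_{\mathcal{H}}\|F\|_{\mathcal{H}}+\|F\|^2_{\mathcal{H}}\big)^{1/2}\left(\int_1^2|y_x|^2dx\right)^{1/2}$; Young then produces $\tfrac{g(0)}{8}\int_1^2|y_x|^2dx+2e_1|\la|^2\big(\|U\|_{\mathcal{H}}\|F\|_{\mathcal{H}}+\|F\|^2_{\mathcal{H}}\big)$. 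This is precisely where the factor $|\la|^2$ enters, and it is the only term carrying any power of $\la$. For the second term I would factor $-\mu'(s)=\sqrt{-\mu'(s)}\sqrt{-\mu'(s)}$, apply Cauchy--Schwarz with $\int_0^\infty-\mu'(s)ds=\mu(0)$ and \eqref{est0_exp}, obtaining a $\la$-free bound. For the third term I would use $\int_1^2\int_0^\infty\mu(s)|(f^4)_x|^2dsdx=\tfrac{1}{c}\|f^4\|_W^2\leq\tfrac{1}{c}\|F\|^2_{\mathcal{H}}$ together with Cauchy--Schwarz and Young, again $\la$-free.

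Finally I would collect the three estimates, move the absorbed $\tfrac{3g(0)}{8}\int_1^2|y_x|^2dx$ to the left (leaving $\tfrac{5g(0)}{8}\int_1^2|y_x|^2dx$), and use $|\la|\geq 1$ to dominate the two $\la$-free constants by $|\la|^2$. Dividing by $\tfrac{5g(0)}{8}$ then yields \eqref{est3_exp} with an explicit $e_3$ assembled from $e_0$, $e_1$, $g(0)$, $\mu(0)$, and $c$. I do not expect a genuine obstacle: the mechanism is identical to the one already used in Lemma \ref{lemma1}, and the only points requiring care are the vanishing of the boundary terms in the $s$-integration by parts and the choice of Young weights so that all three copies of $\int_1^2|y_x|^2dx$ can be absorbed on the left-hand side.
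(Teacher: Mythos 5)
Your proof is correct, but it follows a genuinely different route from the paper's. You run a quantitative version of the $m=1$ computation from Lemma \ref{lemma1}: differentiating \eqref{t4} in $x$, testing against $\mu(s)\overline{y_x}$, integrating by parts in $s$ to reach the analogue of \eqref{g1}, and then absorbing three copies of $\int_1^2|y_x|^2dx$ via Young's inequality, with the factor $|\la|^2$ entering only through the term $\Re\left(i\la\int_1^2\int_0^\infty\mu(s)\gamma_x\overline{y_x}\,ds\,dx\right)$; your constants assemble from $e_0$, $e_1$, $g(0)$, $\mu(0)$, $c$. The paper instead solves the transport equation \eqref{t4} explicitly, $\gamma=\frac{1-e^{-i\la s}}{i\la}y+\mathcal{F}(s)$ with $\mathcal{F}(s)=\int_0^s e^{-i\la(s-\tau)}f^4(\tau)\,d\tau$, so that $\frac{1-e^{-i\la s}}{i\la}\,y_x=\gamma_x-\mathcal{F}_x(s)$; multiplying by $\mu(s)$ and integrating, the claim follows from \eqref{est1_exp} together with two facts imported from \cite{Zhang-ZAMP} (Propositions \ref{prop_1} and \ref{prop_2} in the Appendix): the Duhamel term satisfies $\|\mathcal{F}\|_W\leq m_1\|f^4\|_W$, and $\mathcal{N}=\int_0^{\infty}\mu(s)|1-e^{-i\la s}|^2ds$ is bounded below by some $m_2>0$ uniformly in $|\la|\geq 1$. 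There the power $|\la|^2$ comes directly from the explicit coefficient $(1-e^{-i\la s})/(i\la)$, with no absorption argument, yielding $e_3=\frac{2}{m_2}(e_1+m_1c^{-1})$. What each approach buys: yours is more self-contained, needing only the Dafermos condition, Lemma \ref{lem1-exp} and the estimates \eqref{est0_exp}--\eqref{est1_exp}, and no appendix material; the paper's avoids both the Young-weight bookkeeping and the one step in your argument that genuinely requires care, namely the vanishing of the boundary term $\mu(s)\gamma_x$ as $s\to\infty$ in the $s$-integration by parts --- a step you justify at the same level of rigor as the paper's own derivation of \eqref{g1}, and which can be made precise by integrating over $(0,s_n)$ for a sequence $s_n\to\infty$ chosen, via $\gamma\in W$ and the monotonicity of $\mu$, so that $\mu(s_n)\|\gamma_x(\cdot,s_n)\|^2_{L^2(1,2)}\to 0$.
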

\begin{proof}
Solving \eqref{t4} we have
\begin{equation}\label{solution1}
\gamma=\left(\frac{1-e^{-i\la s}}{i\la}\right)y+\mathcal{F}(s), \quad\text{where}\quad \mathcal{F}(s)=\int_0^s e^{-i\la(s-\tau)}f^4(\tau)d\tau.
\end{equation}
Differentiating the above equation with respect to $x$, multiplying by $\mu(s)$ and integrating over $(1,2)\times (0,\infty)$, we have
\begin{equation}
\mathcal{N}\frac{1}{|\la|^2}\int_1^2|y_x|^2dx\leq 2\int_1^2\int_0^{\infty}\mu(s)|\gamma_x|^2dsdx+2\int_1^2\int_0^{\infty}\mu(s)|\mathcal{F}_x(s)|^2dsdx,
\end{equation}
where $\displaystyle\mathcal{N}=\int_0^{\infty}\mu(s)|1-e^{-i\la s}|^2ds$. 
Observe that, since $f^4\in W$, $\|\mathcal{F}\|_{W}\leq m_1 \|f^4\|_W$ with $m_1>0$ (see Proposition \ref{prop_1} in the Appendix); moreover, $\displaystyle\inf_{\la\in\R;|\la|\geq \varepsilon>0}\mathcal{N}\geq m_2>0$ (see Proposition \ref{prop_2} in the Appendix). These two inequalities and
\eqref{est1_exp} imply
\begin{equation}
\frac{m_2}{|\la|^2}\int_1^2|y_x|^2dx\leq 2(e_1+m_1c^{-1}) \left(\|U\|_{\mathcal{H}}\|F\|_{\mathcal{H}}+\|F\|^2_{\mathcal{H}}\right).
\end{equation}
Hence \eqref{est3_exp} holds with $e_3=\frac{2}{m_2} (e_1+m_1c^{-1})$.
\end{proof}

\begin{Lemma}\label{lem3-exp}
Assume condition \eqref{H}, Hypothesis \ref{Condition1},  $m=1$ and $\ibt{|\la|\geq 1}$. Then, the solution $(u, v, y, \gamma)^{\top}\in D(\mathcal{A})$ of \eqref{t0} satisfies the following inequality
\begin{equation}\label{est4_exp}
\int_1^2|y|^2dx\leq e_4  \left(\|U\|_{\mathcal{H}}\|F\|_{\mathcal{H}}+\|F\|^2_{\mathcal{H}}\right),
\end{equation}
where $e_4$ is a constant independent of $\la$ to be determined below.
\end{Lemma}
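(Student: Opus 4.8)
The plan is to mimic the proof of Lemma \ref{lem2_pol}, but feeding in the Gurtin--Pipkin estimates of Lemmas \ref{lem1-exp} and \ref{lem2-exp} in place of those of Lemma \ref{lem1_pol}. First I would multiply the resolvent equation \eqref{t3} by $\frac{1}{i\la}\overline{y}$, integrate over $(1,2)$, integrate the term $\zeta_{xx}\overline{y}$ by parts (using the boundary condition $y(2)=0$, so that only the interface value at $x=1$ survives), and take the real part. This yields the identity
\begin{equation*}
\int_1^2|y|^2dx=-\Re\left(\frac{1}{i\la}\zeta_x(1)\overline{y}(1)\right)-\Re\left(\frac{1}{i\la}\int_1^2\zeta_x\overline{y}_xdx\right)+\Re\left(\frac{1}{i\la}\int_1^2 f^3\overline{y}dx\right),
\end{equation*}
and the task reduces to bounding each of the three terms on the right by $\epsilon\int_1^2|y|^2dx+C\left(\|U\|_{\mathcal{H}}\|F\|_{\mathcal{H}}+\|F\|^2_{\mathcal{H}}\right)$ for a small $\epsilon$, and then absorbing the $\int_1^2|y|^2dx$ contributions into the left-hand side.

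The interior integral is the clean one: Cauchy--Schwarz gives the bound $\frac{1}{|\la|}\|\zeta_x\|\,\|y_x\|$, and combining \eqref{est2_exp} (which controls $\|\zeta_x\|^2$ uniformly) with \eqref{est3_exp} (which controls $\|y_x\|^2$ by a factor $|\la|^2$), the $|\la|$ produced by $\|y_x\|$ cancels exactly against the prefactor $\frac{1}{|\la|}$, leaving the uniform bound $\sqrt{e_2 e_3}\left(\|U\|_{\mathcal{H}}\|F\|_{\mathcal{H}}+\|F\|^2_{\mathcal{H}}\right)$. The source term is dispatched by Young's inequality together with $|\la|\ge1$ and $\|f^3\|\le\|F\|_{\mathcal{H}}$, giving a contribution of the form $\frac14\int_1^2|y|^2dx+\left(\|U\|_{\mathcal{H}}\|F\|_{\mathcal{H}}+\|F\|^2_{\mathcal{H}}\right)$.

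The delicate term, and the main obstacle, is the interface term $\frac{1}{|\la|}|\zeta_x(1)|\,|y(1)|$. I would estimate $\zeta_x(1)$ and $y(1)$ by the Gagliardo--Nirenberg trace inequality on $(1,2)$, exactly as in \eqref{p6}, writing $|\zeta_x(1)|\le c\,\|\zeta_{xx}\|^{1/2}\|\zeta_x\|^{1/2}+c\,\|\zeta_x\|$ and $|y(1)|\le c\,\|y_x\|^{1/2}\|y\|^{1/2}+c\,\|y\|$, then eliminating $\|\zeta_{xx}\|$ through \eqref{t3} via $\|\zeta_{xx}\|\le|\la|\,\|y\|+\|f^3\|$, i.e.\ $\|\zeta_{xx}\|^{1/2}\le|\la|^{1/2}\|y\|^{1/2}+\|f^3\|^{1/2}$. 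Expanding the product into its four pieces, I would apply a chain of weighted Young inequalities, keeping $\|\zeta_x\|^2$, $\|y_x\|^2$ and $\|f^3\|^2$ as \emph{squares} until the very end so as to isolate a small multiple of $\int_1^2|y|^2dx$ from each piece. Two points require care: first, that every power of $|\la|$ balances — here the $|\la|^2$ hidden in \eqref{est3_exp} is precisely offset by the prefactor $\frac{1}{|\la|}$ and by the $|\la|^{1/2}$ coming from $\|\zeta_{xx}\|^{1/2}$, so that no positive power of $|\la|$ survives, which is exactly why the resulting bound is uniform, in contrast with the $|\la|^{1/2}$ growth of Lemma \ref{lem3_pol}; and second, that every factor of $\|f^3\|$ is paired with another controlled quantity and converted into $\|f^3\|^2\le\|F\|^2_{\mathcal{H}}\le\left(\|U\|_{\mathcal{H}}\|F\|_{\mathcal{H}}+\|F\|^2_{\mathcal{H}}\right)$.

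Once each piece has been brought to the form $\epsilon\int_1^2|y|^2dx+C\left(\|U\|_{\mathcal{H}}\|F\|_{\mathcal{H}}+\|F\|^2_{\mathcal{H}}\right)$, I would collect all the contributions in the identity above, choose $\epsilon$ small enough that the total coefficient of $\int_1^2|y|^2dx$ on the right is strictly less than $1$, and absorb those terms into the left-hand side. This produces \eqref{est4_exp} with an explicit constant $e_4$ assembled from $e_2$, $e_3$, the trace constant, and the Young parameters, and being independent of $\la$ as required.
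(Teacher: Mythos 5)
Your proposal is correct and follows essentially the same route as the paper's own proof: the same identity obtained by multiplying \eqref{t3} by $\frac{1}{i\la}\overline{y}$ and integrating by parts, the same Cauchy--Schwarz treatment of the interior term exploiting the exact cancellation between the $|\la|^2$ in \eqref{est3_exp} and the prefactor $1/|\la|$, the same Gagliardo--Nirenberg expansion of the interface term with $\|\zeta_{xx}\|$ eliminated via \eqref{t3} followed by weighted Young inequalities, and the same final absorption of the small multiples of $\|y\|^2$ into the left-hand side. The only cosmetic difference is that the paper bounds the source term directly by $\|U\|_{\mathcal{H}}\|F\|_{\mathcal{H}}$ via Cauchy--Schwarz instead of splitting off an extra $\frac14\|y\|^2$ by Young's inequality, which changes nothing essential.
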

\begin{proof}
Multiplying \eqref{t3} by $\displaystyle\frac{1}{i\la}\overline{y}$, integrating over $(1,2)$ and taking the real part, we obtain
\begin{equation}\label{t6}
\int_1^2|y|^2dx=-\Re\left(\frac{1}{i\la} \int_1^2\zeta_x\overline{y}_xdx\right)-\Re\left[\frac{1}{i\la}\zeta_x(1)\overline{y}(1)\right]+\Re\left(\frac{1}{i\la}\int_1^2 f^3\overline{y}dx\right).
\end{equation}
Using Lemmas \ref{lem1-exp} and \ref{lem2-exp}, and the fact that $\ibt{|\la|\geq 1}$,  we obtain
\begin{equation}\label{t6-1}
\Re\left(\frac{1}{i\la} \int_1^2\zeta_x\overline{y}_xdx\right)\leq \sqrt{e_2e_3}\left(\|U\|_{\mathcal{H}}\|F\|_{\mathcal{H}}+\|F\|^2_{\mathcal{H}}\right)
\end{equation}
and
\begin{equation}\label{t6-2}
\Re\left(\frac{1}{i\la}\int_1^2 f^3\overline{y}dx\right)\leq \left(\|U\|_{\mathcal{H}}\|F\|_{\mathcal{H}}+\|F\|^2_{\mathcal{H}}\right),
\end{equation}
Now, proceeding as in  Lemma \ref{lem2_pol} for the term $\displaystyle\Re\left[\frac{1}{i\la}\zeta_x(1)\overline{y}(1)\right]$, using the Gagliardo-Nirenberg inequality, Lemmas \ref{lem1-exp} and \eqref{lem2-exp} and  the fact that $|\la|\geq 1$, we obtain
\begin{equation*}\label{t7}
 \begin{array}{ll}
 \displaystyle
 \frac{\beta}{|\la|}\|\zeta_{xx}\|^{\frac{1}{2}}\|\zeta_{x}\|^{\frac{1}{2}}\|y\|
 \leq  \frac{9}{76} \|y\|^2+ e_5\left(\|U\|_{\mathcal{H}}\|F\|_{\mathcal{H}}+\|F\|^2_{\mathcal{H}}\right),
 \end{array}
 \end{equation*}

 \begin{equation*}\label{t8}
 \begin{array}{ll}
 \displaystyle
 \frac{\beta}{|\la|}\|\zeta_{xx}\|^{\frac{1}{2}}\|\zeta_{x}\|^{\frac{1}{2}} \|y_x\|^{\frac{1}{2}}\|y\|^{\frac{1}{2}}\leq \frac{1}{19}\|y\|^2+(e_6+\frac{\beta^2}{2})\left(\|U\|_{\mathcal{H}}\|F\|_{\mathcal{H}}+\|F\|^2_{\mathcal{H}}\right),
 \end{array}
 \end{equation*}
\begin{equation*}\label{t9}
\frac{\beta}{|\la|}\|\zeta_x\|\|y\|\leq \frac{1}{19}\|y\|^2+ e_7 \left(\|U\|_{\mathcal{H}}\|F\|_{\mathcal{H}}+\|F\|^2_{\mathcal{H}}\right)
\end{equation*}
and
 \begin{equation*}\label{t10}
 \begin{array}{ll} \displaystyle
 \frac{\beta}{|\la|} \|\zeta_x\| \|y_x\|^{\frac{1}{2}}\|y\|^{\frac{1}{2}}
 \leq \frac{1}{38}\|y\|^2+\left(\frac{19e_3}{8}+\frac{\beta^2e_2}{2}\right)\left(\|U\|_{\mathcal{H}}\|F\|_{\mathcal{H}}+\|F\|^2_{\mathcal{H}}\right),
 \end{array}
 \end{equation*}
 where $ \displaystyle e_5=\left(\frac{1}{2}+\frac{19^3\beta^4}{16}\right)e_2+\frac{19^2\beta^4}{8}$,  $ \displaystyle e_6=\left(\frac{19\beta^2}{4}+\frac{1}{2}\right)\sqrt{e_2e_3}$,  and $\displaystyle e_7= \frac{19\beta^2\kappa_3}{4} $.
Then,  using the above inequalities, we obtain
\begin{equation}\label{t11}
\left|\Re\left[\frac{1}{i\la}\zeta_x(1)\overline{y}(1)\right]\right|\leq \frac{1}{4}\|y\|^2+ e_8\left(\|U\|_{\mathcal{H}}\|F\|_{\mathcal{H}}+\|F\|^2_{\mathcal{H}}\right),
\end{equation}
where $\displaystyle e_8=e_5+e_6+e_7+\frac{\beta^2(e_2+1)}{2}+\frac{19e_3}{8}$.
Thus, using \eqref{t6-1}, \eqref{t6-2} and \eqref{t11} in \eqref{t6}, we get the desired result with $\displaystyle e_4=\frac{4}{3}(\sqrt{e_2e_3}+e_8+1)$.
\end{proof}

\begin{Lemma}\label{lem4-exp}
Assume condition \eqref{H}, Hypothesis \ref{Condition1},  $m=1$ and $\ibt{|\la|\geq 1}$. Then, the solution $(u, v, y, \gamma)^{\top}\in D(\mathcal{A})$ of \eqref{t0} satisfies 
\begin{equation}\label{est5-exp}
|\zeta_x(1)|^2\leq e_{9} \left(\|U\|_{\mathcal{H}}\|F\|_{\mathcal{H}}+\|F\|^2_{\mathcal{H}}\right)
\end{equation}
and
\begin{equation}\label{est6-exp}
|y(1)|^2\leq e_{10} \left(\|U\|_{\mathcal{H}}\|F\|_{\mathcal{H}}+\|F\|^2_{\mathcal{H}}\right),
\end{equation}
where $e_9$ and $e_{10}$ are constants independent of $\la$ to be determined below.
\end{Lemma}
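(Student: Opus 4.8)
The plan is to obtain both \eqref{est5-exp} and \eqref{est6-exp} from a \emph{single} multiplier identity, namely the one used in Lemma \ref{lem3_pol}: multiplying \eqref{t3} by $2(x-2)\overline{\zeta}_x$, integrating over $(1,2)$ and taking real parts yields, exactly as in \eqref{p12},
\[
|\zeta_x(1)|^2=\int_1^2|\zeta_x|^2\,dx+\Re\Big(2i\la\int_1^2(x-2)\overline{\zeta}_x\,y\,dx\Big)-\Re\Big(2\int_1^2(x-2)f^3\overline{\zeta}_x\,dx\Big).
\]
The first and third terms are harmless: they are controlled $\la$-independently by \eqref{est2_exp} and Cauchy--Schwarz. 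The entire difficulty is the middle term, which carries the factor $i\la$. Bounding it crudely by $2|\la|\,\|\zeta_x\|\,\|y\|$ and inserting \eqref{est2_exp} and \eqref{est4_exp} would only give an $O(|\la|)$ bound, which is useless here; the point is that this term is in fact $\la$-free.

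To reveal this I would \emph{not} substitute \eqref{t3} (that merely reproduces the identity) but the closed form \eqref{solution1}. Since $\zeta=c\int_0^\infty\mu(s)\gamma(s)\,ds$ for $m=1$ by \eqref{zeta m=1}, differentiating \eqref{solution1} in $x$ and integrating against $\mu$ gives $\zeta_x=\frac{G}{i\la}\,y_x+Q_x$, where $G:=c\int_0^\infty\mu(s)\big(1-e^{-i\la s}\big)\,ds$ and $Q_x:=c\int_0^\infty\mu(s)\mathcal F_x(s)\,ds$ collects the data. Inserting this cancels one power of $i\la$:
\[
2i\la\int_1^2(x-2)\overline{\zeta}_x\,y\,dx=-2\overline{G}\int_1^2(x-2)\overline{y}_x\,y\,dx+2i\la\int_1^2(x-2)\overline{Q}_x\,y\,dx.
\]
Using $y(2)=0$ and $\Re\int_1^2(x-2)\overline{y}_x y\,dx=\tfrac12\big(|y(1)|^2-\|y\|^2\big)$, the real part of the first term is $-\Re G\,|y(1)|^2+\Re G\,\|y\|^2-2\,\Im G\,\Im\!\int_1^2(x-2)\overline{y}_x y\,dx$. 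Re-arranging then produces the clean relation
\[
|\zeta_x(1)|^2+\Re G\,|y(1)|^2=\int_1^2|\zeta_x|^2\,dx+\Re G\,\|y\|^2-2\,\Im G\,\Im\!\int_1^2(x-2)\overline{y}_x y\,dx+\mathcal R,
\]
with $\mathcal R$ the remaining data terms; once the right-hand side is shown to be $\la$-free, both \eqref{est5-exp} and \eqref{est6-exp} fall out simultaneously, because the left-hand side is a sum of two nonnegative quantities.

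The main obstacle, and the precise place where the Gurtin--Pipkin structure ($m=1$) is essential, is the term $\Im G\,\Im\!\int_1^2(x-2)\overline{y}_x y\,dx$. The factor $\Im\!\int_1^2(x-2)\overline{y}_x y\,dx$ is only controlled by $\|y_x\|\,\|y\|$, and by \eqref{est3_exp} this grows like $|\la|$. The compensation comes from $\Im G=c\int_0^\infty\mu(s)\sin(\la s)\,ds$: integrating by parts once and using $\mu(0)<\infty$ and $\mu'\in L^1(0,\infty)$ from \eqref{H} gives $|\Im G|\le C/|\la|$, which exactly cancels the $|\la|$ coming from \eqref{est3_exp} together with \eqref{est4_exp} and leaves an $O(1)$ bound. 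The data part $\mathcal R$ is handled in the same spirit: its only dangerous piece $2i\la\int(x-2)\overline{Q}_x y$ becomes $\la$-free after writing $i\la Q_x=c\int_0^\infty\mu(s)f^4_x\,ds+c\int_0^\infty\mu'(s)\mathcal F_x(s)\,ds$ (integration by parts in $s$, using $\mathcal F(\cdot,0)=0$), which is bounded by $\|F\|_{\mathcal H}$ through Proposition \ref{prop_1} and \eqref{H}. Finally $\Re G=\tfrac{c}{2}\int_0^\infty\mu(s)|1-e^{-i\la s}|^2\,ds=\tfrac{c}{2}\,\mathcal N\ge \tfrac{c}{2}\,m_2>0$ by Proposition \ref{prop_2}, so reading off the two nonnegative summands yields \eqref{est5-exp} directly, and dividing by $\Re G$ turns the $\la$-free bound on $\Re G\,|y(1)|^2$ into \eqref{est6-exp}.
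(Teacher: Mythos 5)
Your proof is correct and is essentially the paper's own argument: the same multiplier identity \eqref{t12}, the same substitution of the resolved history variable \eqref{solution1} into the cross term $2i\la\int_1^2(x-2)\overline{\zeta}_x y\,dx$, the same $O(1/|\la|)$ bound on the oscillatory kernel integral (obtained by integrating by parts in $s$ and using \eqref{H}) to compensate the $O(|\la|)$ growth of $\|y_x\|$ from Lemma \ref{lem2-exp}, the same integration by parts in $s$ for the data term involving $\mathcal{F}$, and the same simultaneous read-off of both \eqref{est5-exp} and \eqref{est6-exp} from a combined left-hand side that is a sum of two nonnegative quantities. The only cosmetic difference is bookkeeping: the paper splits $\mathcal{Q}=cg(0)-c\int_0^{\infty}\mu(s)e^{i\la s}ds$ and keeps the clean constant $cg(0)$ in front of $|y(1)|^2$, absorbing the \emph{whole} oscillatory integral into the error terms, whereas you keep the $\la$-dependent coefficient $\Re G$ and bound only $\Im G$ by $C/|\la|$, which is why you additionally need the uniform lower bound of Proposition \ref{prop_2} at this stage (the paper uses that proposition only in Lemma \ref{lem2-exp}).
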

\begin{proof}
Multiplying \eqref{t3} by $2(x-2)\overline{\zeta}_x$, integrating over $(1,2)$ and taking the real part, we obtain
\begin{equation}\label{t12}
|\zeta_x(1)|^2=\Re\left(2i\la\int_1^2(x-2)\overline{\zeta}_xydx\right)+\int_1^2|\zeta_x|^2dx-\Re\left(2\int_1^2(x-2)f^3\overline{\zeta}_xdx \right).
\end{equation}
From \eqref{solution1},  we have 
\begin{equation*}
i\la \gamma=(1-e^{-i\la s})y+i\la \mathcal{F}(s).
\end{equation*}
Differentiating the above equation with respect to $x$, multiplying by $c\mu(s)$, integrating over $(0,\infty)$  and using definition of $\zeta$ in \eqref{zeta m=1} we obtain
\begin{equation*}\label{t13}
i\la \overline{\zeta}_x=-\mathcal{Q}\overline{y}_x+i\la c \int_0^{\infty}\mu(s)\overline{\mathcal{F}_x}(s)ds,
\end{equation*}
where $\displaystyle\mathcal{Q}=cg(0)-c\int_0^{\infty}\mu(s)e^{i\la s}ds$.
Multiplying  the above equation  by $2(x-2)\overline{y}$, integrating over $(1,2)$ and taking the real part, we have
\begin{equation*}
\begin{array}{ll}
\displaystyle
\Re\left(2i\la\int_1^2(x-2)\overline{\zeta}_xydx\right)=\Re\left(-\mathcal{Q}\int_1^22(x-2)y\overline{y}_xdx\right)+\Re\left(2i\la c\int_1^2(x-2)y\int_0^{\infty}\mu(s)\overline{\mathcal{F}}_x(s)dsdx\right)\\
\displaystyle
=cg(0)\int_1^2 |y|^2dx-cg(0)|y(1)|^2+\Re\left(2c\int_0^{\infty}\mu(s)e^{i\la s}ds\int_1^2(x-2)y\overline{y}_xdx\right)\\
\displaystyle
+\Re\left(2i\la c\int_1^2(x-2)y\int_0^{\infty}\mu(s)\overline{\mathcal{F}}_x(s)dsdx\right).
\end{array}
\end{equation*}
Using the above equation in \eqref{t12}, one has
\begin{equation}\label{t14}
\begin{array}{ll}
\displaystyle
cg(0)|y(1)|^2+|\zeta_x(1)|^2=cg(0)\int_1^2 |y|^2dx+\int_1^2|\zeta_x|^2dx+\Re\left(2c\int_0^{\infty}\mu(s)e^{i\la s}ds\int_1^2(x-2)y\overline{y}_xdx\right)\\
\displaystyle
+\Re\left(2i\la c\int_1^2(x-2)y\int_0^{\infty}\mu(s)\overline{\mathcal{F}}_x(s)dsdx\right)-
\Re\left(2\int_1^2(x-2)f^3\overline{\zeta}_xdx \right).
\end{array}
\end{equation}
Thanks to Lemma \ref{lem1-exp}, it results
\begin{equation}\label{t15}
\left|\Re\left(2\int_1^2(x-2)f^3\overline{\zeta}_xdx \right)\right|\leq 3e_2^{\frac{1}{2}}  \left(\|U\|_{\mathcal{H}}\|F\|_{\mathcal{H}}+\|F\|^2_{\mathcal{H}}\right).
\end{equation}
Integrating by parts we obtain
\begin{equation*}
\begin{array}{ll}
\displaystyle
\int_0^{\infty}\mu(s)e^{i\la s}ds =\frac{1}{i\la}\int_0^{\infty}-\mu^{\prime}(s)e^{i\la s}ds+\lim_{s\to \infty}\frac{1}{i\la}\mu(s)e^{i\la s}-\frac{\mu(0)}{i\la}=\frac{1}{i\la}\int_0^{\infty}-\mu^{\prime}(s)e^{i\la s}ds-\frac{\mu(0)}{i\la};
\end{array}
\end{equation*}
hence $$\left|\int_0^{\infty}\mu(s)e^{i\la s}ds\right| \leq \frac{2\mu(0)}{|\la|}.$$
Then, using the above inequality and Lemmas \ref{lem2-exp} and \ref{lem3-exp},  we obtain
\begin{equation}\label{t16}
\begin{array}{ll}
\displaystyle
\left|\Re\left(2c\int_0^{\infty}\mu(s)e^{i\la s}ds\int_1^2(x-2)y\overline{y}_xdx\right)\right|\leq \frac{4c\mu(0)}{|\la|}\|y\|\|y_x\|\leq 4c\mu(0)\sqrt{e_3e_4}\left(\|U\|_{\mathcal{H}}\|F\|_{\mathcal{H}}+\|F\|^2_{\mathcal{H}}\right).
\end{array}
\end{equation}
Since $0<\tau<s<\infty$, integrating by parts with respect to $s$ and using the definition of $\mathcal{F}$ given in \eqref{solution1},  we obtain
\begin{equation}\label{t17}
\int_0^{\infty}\mu(s)\mathcal{F}(s)ds=\frac{1}{i\la}\int_0^{\infty}\mu(\tau)f^4(\tau)d\tau +\frac{1}{i\la} \int_0^{\infty}\int_{\tau}^{\infty}\mu^{\prime}(s)e^{-i\la (s-\tau)}f^4(\tau)dsd\tau.
\end{equation}
Now, for the last term on the right hand side in \eqref{t14}, we have
\begin{equation}\label{t17.1}
\left|\Re\left(2i\la c\int_1^2(x-2)y\int_0^{\infty}\mu(s)\overline{\mathcal{F}}_x(s)dsdx\right)\right|\leq 2c\|y\|  \, \left\Vert\la\int_0^{\infty}\mu(s)\overline{\mathcal{F}}_x(s)ds \right\Vert.
\end{equation}
In order to estimate the last term in the above inequality, we will differentiate \eqref{t17} and, using the fact that $f^4\in W$, we get
\begin{equation*}\label{t18}
\begin{array}{ll}
\displaystyle
\left\Vert\la\int_0^{\infty}\mu(s)\overline{\mathcal{F}}_x(s)ds \right\Vert\leq \left\Vert\int_0^{\infty}\mu(\tau)\overline{f^4_x}(\tau)d\tau\right\Vert +{\left\Vert \int_0^{\infty}\int_{\tau}^{\infty}\mu^{\prime}(s)e^{i\la (s-\tau)}\overline{f^4_x}(\tau)dsd\tau \right\Vert}\\
\displaystyle
\leq \left(g(0)\int_1^2 \int_0^{\infty}\mu(s)|f^4_x(\tau)|^2d\tau dx\right)^{\frac{1}{2}}+\left(\int_1^2 \left(\int_0^{\infty}|f^4_x(\tau)|\int_{\tau}^{\infty}|\mu^{\prime}(s)|dsd\tau\right)^2dx\right)^{\frac{1}{2}}
\\
\displaystyle
\leq \sqrt{g(0)}\|f^4\|_{W}+\left(\int_1^2 \left(\int_0^{\infty}\mu(s)|f^4_x(\tau)|d\tau\right)^2dx\right)^{\frac{1}{2}}
\leq 2\sqrt{g(0)c^{-1}}\|F\|_{\mathcal{H}}.
\end{array}
\end{equation*}
Thus, substituting the above inequality into \eqref{t17.1}, we obtain
 \begin{equation}\label{t19}
\left|\Re\left(2i\la c\int_1^2(x-2)y\int_0^{\infty}\mu(s)\overline{\mathcal{F}}_x(s)dsdx\right)\right| \leq 4\sqrt{g(0)c}\left(\|U\|_{\mathcal{H}}\|F\|_{\mathcal{H}}+\|F\|^2_{\mathcal{H}}\right).
\end{equation}
Finally, using \eqref{t15}, \eqref{t16} and \eqref{t19} in \eqref{t14} and thanks to Lemmas \ref{lem1-exp} and \ref{lem3-exp}, we deduce that 
\begin{equation}
cg(0)|y(1)|^2+|\zeta_x(1)|^2\leq e_{11}\left(\|U\|_{\mathcal{H}}\|F\|_{\mathcal{H}}+\|F\|^2_{\mathcal{H}}\right), 
\end{equation}
where $e_{11}=3e_2^{\frac{1}{2}} +4c\mu(0)\sqrt{e_3e_4}+4\sqrt{g(0)c}+cg(0)e_4+e_2$. Thus,  \eqref{est5-exp} and \eqref{est6-exp} hold with $e_9=e_{11}$ and $e_{10}=\frac{e_{11}}{cg(0)}$.
\end{proof}
\begin{Lemma}\label{lem5-exp}
Assume condition \eqref{H}, Hypothesis \ref{Condition1},  $m=1$ and $\ibt{|\la|\geq 1}$. Then,  the solution $(u, v, y, \gamma)^{\top}\in D(\mathcal{A})$ of \eqref{t0} satisfies the following estimates
\begin{equation}\label{est7-exp}
\int_0^1 \eta|u_x|^2dx\leq  e_{12}\left(\|U\|_{\mathcal{H}}\|F\|_{\mathcal{H}}+\|F\|^2_{\mathcal{H}}\right),
\end{equation}
and
\begin{equation}\label{est8-exp}
\int_0^1 \frac{1}{\sigma}|v|^2dx\leq  e_{13}\left(\|U\|_{\mathcal{H}}\|F\|_{\mathcal{H}}+\|F\|^2_{\mathcal{H}}\right),
\end{equation}
where $e_{12}$ and $e_{13}$ are constants independent of $\la$ to be determined below.
\end{Lemma}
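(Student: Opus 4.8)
The plan is to mirror the argument of Lemma \ref{lem4_pol}, since the wave part of the system is governed by the same equation in both regimes: substituting \eqref{t1} into \eqref{t2} yields exactly \eqref{pol9}, namely $\la^2 u+\sigma(\eta u_x)_x=-(i\la f^1+f^2)$. Consequently the multiplier identity \eqref{p20new} furnished by Lemma \ref{lemmaAppendix}, obtained by testing this equation against $\displaystyle -2\frac{x}{\sigma}\overline{u_x}+\frac{K_a}{2\sigma}\overline{u}$, remains valid verbatim for $m=1$. The decisive difference from the polynomial case is that every ingredient we feed into this identity will now be \emph{uniform} in $\la$, which is precisely the source of exponential rather than polynomial stability.

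First I would bound the interior contributions of $f^1$ and $f^2$ in \eqref{p20new} exactly as in Step~1 of Lemma \ref{lem4_pol}, using the Cauchy--Schwarz inequality, the monotonicity of $\frac{x}{\sqrt{a}}$ on $(0,1]$, the Hardy--Poincar\'e inequality of Proposition \ref{Hardy}, and the bound \eqref{lambdau} for $\|\la u\|_{\frac{1}{\sigma}}$. These already produce estimates of the form $C\left(\|U\|_{\mathcal{H}}\|F\|_{\mathcal{H}}+\|F\|^2_{\mathcal{H}}\right)$ with constants free of $\la$, reducing \eqref{p20new} to
\begin{equation*}
\left(1+\frac{K_a}{2}-M_1\right)\int_0^1\frac{1}{\sigma}|\la u|^2dx+\left(1-\frac{K_a}{2}-M_2\right)\int_0^1\eta|u_x|^2dx\leq C\left(\|U\|_{\mathcal{H}}\|F\|_{\mathcal{H}}+\|F\|^2_{\mathcal{H}}\right)+\eta(1)|u_x(1)|^2+\frac{1}{\sigma(1)}|\la u(1)|^2,
\end{equation*}
the remaining cross term $\frac{K_a}{2}\eta(1)|u_x(1)||u(1)|$ being absorbed by Young's inequality.

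The crucial step is the treatment of the boundary terms. Through the transmission conditions $\eta(1)u_x(1)=\zeta_x(1)$ and $v(1)=y(1)$ together with \eqref{t1}, these reduce to $|\zeta_x(1)|^2$, $|y(1)|^2$ and the trace $|f^1(1)|^2\leq \left(\max_{x\in[0,1]}\eta^{-1}\right)\|F\|^2_{\mathcal{H}}$. Here I would invoke Lemma \ref{lem4-exp}, which bounds $|\zeta_x(1)|^2$ and $|y(1)|^2$ by $e_9$ and $e_{10}$ times $\left(\|U\|_{\mathcal{H}}\|F\|_{\mathcal{H}}+\|F\|^2_{\mathcal{H}}\right)$ \emph{without} the factor $(1+|\la|^{\frac{1}{2}})$ present in Lemma \ref{lem3_pol}; this $\la$-freedom is the whole point. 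Inserting these bounds and invoking the two strict inequalities of Hypothesis \ref{Condition1}, so that both coefficients $1+\frac{K_a}{2}-M_1$ and $1-\frac{K_a}{2}-M_2$ are strictly positive, yields \eqref{est7-exp} together with the companion bound $\int_0^1\frac{1}{\sigma}|\la u|^2dx\leq e\left(\|U\|_{\mathcal{H}}\|F\|_{\mathcal{H}}+\|F\|^2_{\mathcal{H}}\right)$. Finally, \eqref{est8-exp} follows from $v=i\la u-f^1$ in \eqref{t1}: writing $\frac{1}{\sigma}|v|^2\leq 2\frac{1}{\sigma}|\la u|^2+2\frac{1}{\sigma}|f^1|^2$ and controlling the last integral by $c_0^2\|F\|^2_{\mathcal{H}}$ via Hardy--Poincar\'e. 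There is no genuine conceptual obstacle here beyond bookkeeping; the only subtlety is ensuring that every constant $e_{12},e_{13}$ remains independent of $\la$, which rests entirely on the $\la$-uniform boundary estimates of Lemma \ref{lem4-exp}.
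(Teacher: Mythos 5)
Your proposal is correct and follows essentially the same route as the paper's own proof: reuse the multiplier identity \eqref{p20new} of Lemma \ref{lemmaAppendix} with $\la$-uniform interior bounds, then control the boundary terms via the transmission conditions and the $\la$-free estimates of Lemma \ref{lem4-exp}, concluding with Hypothesis \ref{Condition1} and the relation $v=i\la u-f^1$. The only cosmetic difference is that you emphasize explicitly why the absence of the $(1+|\la|^{\frac{1}{2}})$ factor is the crux, which the paper leaves implicit.
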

\begin{proof}
Using similar arguments as in the Step 1 of the proof of Lemma \ref{lem4_pol}, we obtain
\begin{equation}\label{result1-exp}
\begin{array}{l}
\displaystyle\left(1+\frac{K_a}{2}-M_1\right)\int_0^1 \frac{1}{\sigma}|\la u|^2dx+\left(1-\frac{K_a}{2}- M_2\right)\int_0^1 \eta |u_x|^2dx
\leq e_{14} \left(\|U\|_{\mathcal{H}}\|F\|_{\mathcal{H}}+\|F\|_{\mathcal{H}}^2\right)\\
\displaystyle+\frac{1}{\sigma(1)}|\la u(1)|^2
+\eta(1)|u_x(1)|^2
+\frac{2}{\sigma(1)}|f^1(1)| |\la u(1)|+\frac{K_a}{2}\eta(1)|u_x(1)| |u(1)|,
\end{array}
\end{equation}
where $\displaystyle e_{14}=\frac{2(c_1+1)}{\sqrt{a(1)}}+4c_0c_1+\frac{K_a}{2}c_0c_1+\frac{K_a}{2}c_1$.\\
Thanks to the transmission conditions and Lemma \ref{lem4-exp}, we get
\begin{equation}\label{t20}
\eta(1)|u_x(1)|^2\leq \frac{e_{9}}{\eta(1)} \left(\|U\|_{\mathcal{H}}\|F\|_{\mathcal{H}}+\|F\|^2_{\mathcal{H}}\right)\quad\text{and}\quad|v(1)|^2\leq e_{10} \left(\|U\|_{\mathcal{H}}\|F\|_{\mathcal{H}}+\|F\|^2_{\mathcal{H}}\right).
\end{equation}
By  \eqref{t1} and by  the second estimation in \eqref{t20}, we obtain 
\begin{equation}\label{t22}
|\la u(1)|^2\leq 2|v(1)|^2+2|f^1(1)|^2\leq e_{15}  \left(\|U\|_{\mathcal{H}}\|F\|_{\mathcal{H}}+\|F\|^2_{\mathcal{H}}\right),
\end{equation}
where $\displaystyle e_{15}=2(e_{10}+\max_{x\in[0,1]}\eta^{-1})$.
Now, using Young's inequality we get
\begin{equation}\label{t23}
\frac{2}{\sigma(1)}|f^1(1)| |\la u(1)|\leq \frac{1}{\sigma(1)}|f^1(1)|^2+\frac{1}{\sigma(1)}|\la u(1)|^2\leq e_{16}  \left(\|U\|_{\mathcal{H}}\|F\|_{\mathcal{H}}+\|F\|^2_{\mathcal{H}}\right)
\end{equation}
and
\begin{equation}\label{t23*}
\frac{K_a}{2}\eta(1)|u_x(1)| |u(1)| \leq \tilde{e}_{16} \left(\|U\|_{\mathcal{H}}\|F\|_{\mathcal{H}}+\|F\|^2_{\mathcal{H}}\right),
\end{equation}
where $\displaystyle e_{16}=\frac{1}{\sigma(1)}(e_{15}+\max_{x\in[0,1]}\eta^{-1})$ and $\tilde{e}_{16}=\frac{K_a^2}{8}e_9+\frac{e_{15}}{2}$.
Finally, substituting the first  inequality in \eqref{t20}, \eqref{t22},  \eqref{t23} and \eqref{t23*} into \eqref{result1-exp}, we obtain
\begin{equation}\label{result2-exp}
\begin{array}{l}
\displaystyle\left(1+\frac{K_a}{2}-M_1\right)\int_0^1 \frac{1}{\sigma}|\la u|^2dx+\left(1-\frac{K_a}{2}-M_2\right)\int_0^1 \eta |u_x|^2dx
\leq e_{17} \left(\|U\|_{\mathcal{H}}\|F\|_{\mathcal{H}}+\|F\|_{\mathcal{H}}^2\right),
\end{array}
\end{equation}
with $\displaystyle e_{17}=e_{14}+\frac{e_{9}}{\eta(1)}+\frac{e_{15}}{\sigma(1)}+e_{16}+\tilde{e}_{16}$.
Therefore, using Hypothesis \ref{Condition1} in \eqref{result2-exp}, we obtain \eqref{est7-exp} with $e_{12}=\frac{e_{17}}{1-\frac{K}{2}-M_2}$.\\
From  \eqref{pol4} and \eqref{result2-exp}, we get
\begin{equation*}
\int_0^1 \frac{1}{\sigma} |v|^2dx\leq 2\frac{e_{17}}{1+\frac{K}{2}-M_1}  \left(\|U\|_{\mathcal{H}}\|F\|_{\mathcal{H}}+\|F\|_{\mathcal{H}}^2\right)+2c_0^2\|F\|^2_{\mathcal{H}}.
\end{equation*}
Hence, \eqref{est8-exp} holds with $\displaystyle e_{13}=\frac{2 e_{17}}{1+\frac{K}{2}-M_1}+2c_0^2$.
\end{proof}

\noindent\textbf{Proof of Proposition \ref{proposition4}}.
Adding estimates \eqref{est1_exp}, \eqref{est4_exp}, \eqref{est7-exp} and \eqref{est8-exp} and using the fact that $|\la|\geq 1$, we obtain
\begin{equation}
\|U\|^2_{\mathcal{H}}\leq e_{18}  \left(\|U\|_{\mathcal{H}}\|F\|_{\mathcal{H}}+\|F\|_{\mathcal{H}}^2\right),
\end{equation}
where $\displaystyle e_{18}=me_{1}+e_{4}+e_{12}+e_{13}$.
Thanks to Young's inequality, we get $\|U\|^2_{\mathcal{H}}\leq \left(e_{18}^2+2e_{18}\right) \|F\|_{\mathcal{H}}^2$ and using the fact that $\left(e_{18}^2+2e_{18}\right)\leq \left(e_{18}+1\right)^2$, we deduce that $\|U\|_{\mathcal{H}}\leq K_2 \|F\|_{\mathcal{H}}$  with $K_2=e_{18}+1$ and the thesis follows.\\
\textbf{Proof of Theorem \ref{exponential_Stab}}.  From Proposition \ref{proposition4}, we get \eqref{E1}.  Next, we will prove \eqref{E2}  by a contradiction argument. Suppose that  there exists
$( \la_n, U_n=(u^n, v,^n y^n, \gamma^n)^{\top})\subset \R^{\ast}\times D(\mathcal{A})$, with $|\la_n|\geq 1$ such that  $|\la _n|\to\infty$, $\|U_n\|_{\mathcal{H}}=1$ and there exists a sequence $F_n=(f^1_n,f_n^2, f_n^3, f_n^4)\in\mathcal{H}$ such that
$$\left(i\la_nI-\mathcal{A}\right)U_n=F_n\to 0\quad \text{in}\quad \mathcal{H}\quad\text{as}\quad n\to 0.$$
By Proposition \ref{proposition4} and taking $U=U_n$, $F=F_n$ and $\la=\la_n$,  we can deduce that $\|U_n\|_{\mathcal{H}}\to 0$ as $|\la _n|\to\infty$, which contradicts $\|U_n\|_{\mathcal{H}}=1$. Thus, condition \eqref{E2} holds true. The result follows from the Huang-Pr\"uss Theorem (see  \cite{Huang01} and  \cite{pruss01}) and the proof is thus completed.

\section{\textbf{Appendix}}

\begin{Proposition}(Hardy-Poincar\'e Inequality)\label{Hardy}(see \cite{akil2023stability})
Assume Hypothesis \ref{hyp3}.  Then there exists $C_{HP} > 0$ such that
\begin{equation}\label{HP}\tag{$\rm{HP}$}
\int_0^1 u^2 \dfrac{1}{\sigma}dx\leq C_{HP} \int_0^1 u_x^2dx\quad\forall\, v\in H^1_{\frac{1}{\sigma},0}(0,1).
\end{equation}
where $\displaystyle{C_{HP}=\left(\frac{4}{a(1)}+\max_{x\in [\beta,1]}\left(\frac{1}{a}\right)C_P\right)\max_{x\in [0,1]}\eta(x)}$,  $C_P$ is the constant of the classical Poincar\'e inequality on $(0,1)$ and $\beta\in (0,1)$.  \end{Proposition}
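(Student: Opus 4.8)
The plan is to exploit the identity $\frac{1}{\sigma}=\frac{\eta}{a}$ together with the fact that $\eta$ is continuous and strictly positive on $[0,1]$, so that $0<\eta\le\eta_{\max}:=\max_{x\in[0,1]}\eta(x)<\infty$. This reduces \eqref{HP} to the weighted inequality $\int_0^1\frac{u^2}{a}\,dx\le\big(\frac{4}{a(1)}+\max_{x\in[\beta,1]}\frac{1}{a}\,C_P\big)\int_0^1 u_x^2\,dx$, with the factor $\eta_{\max}$ pulled out exactly as it appears in the stated $C_{HP}$. I would then fix $\beta\in(0,1)$ and split $\int_0^1=\int_0^\beta+\int_\beta^1$, treating the non-degenerate piece $[\beta,1]$ and the degenerate piece $[0,\beta]$ by completely different means.

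On $[\beta,1]$ the function $a$ is continuous and strictly positive, so $\frac{1}{a}\le\max_{x\in[\beta,1]}\frac1a$; combining this with the classical Poincaré inequality on $(0,1)$, namely $\int_0^1 u^2\,dx\le C_P\int_0^1 u_x^2\,dx$ (available since $u(0)=0$), gives $\int_\beta^1\frac{u^2}{a}\,dx\le\max_{x\in[\beta,1]}(1/a)\,C_P\int_0^1 u_x^2\,dx$, which is precisely the second contribution to $C_{HP}$. The heart of the argument is the degenerate piece $[0,\beta]$. Here I would invoke the classical one-dimensional Hardy inequality $\int_0^\beta\frac{u^2}{x^2}\,dx\le 4\int_0^\beta u_x^2\,dx$, valid for absolutely continuous $u$ with $u(0)=0$, and then convert the singular weight $x^{-2}$ into $a^{-1}$ using the monotonicity built into Hypothesis \ref{hyp3}. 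Precisely, in the weakly or strongly degenerate regime \eqref{rq} holds for every $\alpha\ge K_a$ with $K_a<2$, so choosing $\alpha\le 2$ and writing $\frac{x^2}{a}=x^{2-\alpha}\cdot\frac{x^\alpha}{a}$ as a product of two non-negative non-decreasing factors shows that $x\mapsto\frac{x^2}{a(x)}$ is non-decreasing on $(0,1]$. Evaluating at the right endpoint yields the pointwise comparison $\frac{x^2}{a(x)}\le\frac{1}{a(1)}$, hence $\frac{1}{a}=\frac{1}{x^2}\cdot\frac{x^2}{a}\le\frac{1}{a(1)\,x^2}$; substituting this and applying Hardy gives $\int_0^\beta\frac{u^2}{a}\,dx\le\frac{1}{a(1)}\int_0^\beta\frac{u^2}{x^2}\,dx\le\frac{4}{a(1)}\int_0^1 u_x^2\,dx$, the first contribution to $C_{HP}$.

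Adding the two pieces, restoring the factor $\eta_{\max}$ and using $\frac{1}{\sigma}=\frac{\eta}{a}$ then yields \eqref{HP} with exactly the claimed constant $C_{HP}=\big(\frac{4}{a(1)}+\max_{x\in[\beta,1]}(1/a)\,C_P\big)\eta_{\max}$. Note that the finiteness of $\int_0^\beta\frac{u^2}{a}\,dx$ is automatic from the same chain of estimates, since the dominating term $\int_0^1 u_x^2\,dx$ is finite for $u\in H^1_{\frac{1}{\sigma},0}(0,1)$. I expect the degenerate estimate to be the main obstacle: one must both justify the sharp Hardy constant $4$ on $[0,\beta]$ and, more delicately, secure the \emph{global} monotonicity of $x^2/a$ so that the comparison $x^2/a\le 1/a(1)$ persists all the way up to $x=1$. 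This is exactly where the standing restriction $K_a<2$ (equivalently, the admissibility of some $\alpha\le 2$ in \eqref{rq}) is indispensable: without $\alpha\le 2$ the factor $x^{2-\alpha}$ is no longer non-decreasing, the weight $x^{-2}$ can no longer be dominated by $a^{-1}$ with the clean constant $1/a(1)$, and the degenerate integral may even fail to converge.
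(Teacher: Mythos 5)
Your proof is correct and is, in all essentials, the argument behind the cited result (the paper itself gives no proof, deferring to \cite{akil2023stability}): split the integral at $\beta$, use the classical Hardy inequality with constant $4$ together with the monotonicity of $x^{2}/a$ on the degenerate piece, the classical Poincar\'e inequality weighted by $\max_{[\beta,1]}(1/a)$ on the nondegenerate piece, and the factor $\max_{[0,1]}\eta$ from $1/\sigma=\eta/a$, which reproduces the stated constant $C_{HP}$ term by term. The one point worth flagging is that your degenerate-piece estimate requires the \emph{global} monotonicity of $x^{2}/a$ on $(0,1]$, which follows from \eqref{WD} or \eqref{SD} with $K_a<2$ (via the paper's Remark) but not from the literal Hypothesis \ref{hyp3}, under which the inequality actually fails (take $a(x)=x^{3}$, $b\equiv 0$ and $u_\varepsilon(x)=\max\{x-\varepsilon,0\}$, for which $\int_0^1 u_\varepsilon^2/a\,dx\to\infty$ while $\int_0^1 |(u_\varepsilon)_x|^2dx\leq 1$); since $K_a<2$ is the paper's standing assumption, this is a defect in how the statement's hypotheses are phrased rather than a gap in your proof, and you diagnosed it correctly.
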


\begin{Lemma}\label{Lemma0}(See Lemma 2.4 in \cite{fragnelli2022linear})
\begin{enumerate}
\item Assume  Hypothesis \ref{hyp1}. If $u\in H^2_{\frac{1}{\sigma}}(0,1)$ and if $v\in H^1_{\frac{1}{\sigma},0}(0,1)$, then $\displaystyle\lim_{x\to 0}v(x)u_x(x)=0$.
\vspace{0.2cm}

\item Assume  Hypothesis \ref{hyp3}. If $u\in D(\mathcal{A})$, then $xu_x(\eta u_x)_x\in L^1(0,1)$.
\vspace{0.4cm}

\item Assume Hypothesis \ref{hyp3}. If $u\in D(\mathcal{A})$ and $K_a\leq 1$, then $\displaystyle \lim_{x\to 0}x |u_x|^2=0$.\vspace{0.2cm}

\item Assume Hypothesis \ref{hyp3}. If $u\in D(\mathcal{A})$,  $K_a> 1$ and $\dfrac{xb}{a}\in L^{\infty}(0,1)$, then  $\displaystyle \lim_{x\to 0}x |u_x|^2=0$.
\vspace{0.2cm}

\item Assume Hypothesis \ref{hyp3}. If $u\in H^1_{\frac{1}{\sigma}}(0,1)$, then $\displaystyle\lim_{x\to 0}\dfrac{x}{a}|u(x)|^2=0$.
\end{enumerate}
\end{Lemma}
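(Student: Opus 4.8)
The plan is to treat all five items as boundary statements at the degenerate endpoint $x=0$, exploiting two structural facts. First, since $\frac{b}{a}\in L^1(0,1)$, the weight $\eta(x)=\exp\left(\int_{1/2}^x\frac{b}{a}\right)$ extends continuously to $[0,1]$ with $0<\eta_0\le\eta\le\eta_1$; hence every statement involving $u_x$ can be rephrased through $w:=\eta u_x$, for which $\int_0^1|w|^2dx<\infty$ (from $\int_0^1\eta|u_x|^2dx<\infty$) and, when $u\in D(\mathcal{A})$, $\int_0^1 a|w_x|^2dx<\infty$ (since $w_x=Bu/\sigma$ and $Bu\in L^2_{\frac{1}{\sigma}}$ gives $\int_0^1\sigma|(\eta u_x)_x|^2<\infty$). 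Second, from $K_a<2$ one gets $a(x)\ge a(1)x^{K_a}\ge a(1)x^2$ on $(0,1]$, so the weights $\frac{x^2}{a}$ and $\frac{x|a'|}{a}\le K_a$ stay bounded near $0$. The recurring mechanism is: show the product in question has an $L^1(0,1)$ derivative, so that it admits a limit as $x\to0^+$, and then show independently either that the $\liminf$ is $0$ or that a nonzero limit is impossible; together these force the limit to vanish.

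For item (1) I set $w=\eta u_x$ and note $(vw)_x=v_x w+v w_x$. By Cauchy–Schwarz, $\int_0^1|v_x w|\le\left(\int_0^1\eta|v_x|^2\right)^{1/2}\left(\int_0^1\eta|u_x|^2\right)^{1/2}$ and $\int_0^1|v w_x|\le\left(\int_0^1\tfrac{1}{\sigma}|v|^2\right)^{1/2}\left(\int_0^1\sigma|(\eta u_x)_x|^2\right)^{1/2}$, both finite because $u\in H^2_{\frac{1}{\sigma}}$ and $v\in H^1_{\frac{1}{\sigma},0}$; thus $vw$ has a limit $\ell$ at $0$. Writing $v(x)=\int_0^x v_\xi d\xi$ and using $v(0)=0$ gives $|v(x)|^2\le\eta_0^{-1}x\,\varepsilon(x)^2$ with $\varepsilon(x)^2=\int_0^x\eta|v_\xi|^2\to0$, so $|v(x)|=o(\sqrt{x})$. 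If $\ell\neq0$ then $|u_x|\ge c_0/|v(x)|$ near $0$ for some $c_0>0$, whence $\int_0^\delta\eta|u_x|^2\ge c\int_0^\delta\frac{dx}{x}=\infty$, contradicting $u\in H^1_{\frac{1}{\sigma}}$; therefore $\ell=0$ and $v u_x\to0$.

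Items (2), (3) and (4) again use $w=\eta u_x$ with $u\in D(\mathcal{A})$. The key estimate is $\int_0^1 x|w||w_x|\le\left(\int_0^1 a|w_x|^2\right)^{1/2}\left(\int_0^1\frac{x^2}{a}|w|^2\right)^{1/2}<\infty$, the last factor being finite because $\frac{x^2}{a}$ is bounded and $w\in L^2$. For (2) this is all that is needed, since $x u_x(\eta u_x)_x=\frac{x}{\eta}w w_x$. For (3) and (4) the a.e. derivative $\frac{d}{dx}\left(x|w|^2\right)=|w|^2+2x\Re(w\overline{w_x})$ then lies in $L^1(0,1)$, so $x|w|^2$ has a limit at $0$; since $|w|^2\in L^1$ forces $\liminf_{x\to0}x|w|^2=0$, that limit is $0$, and boundedness of $\eta$ yields $x|u_x|^2\to0$. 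This handles both regimes at once; the stated split at $K_a=1$, with the extra assumption $\frac{xb}{a}\in L^\infty(0,1)$ in (4), is exactly what is needed in the alternative route that differentiates $x|u_x|^2$ directly and must absorb the drift contribution $2\frac{xb}{a}|u_x|^2$ coming from $a u_{xx}=Bu-bu_x$.

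For item (5) I differentiate $\frac{x}{a}|u|^2$, obtaining $\frac{a-xa'}{a^2}|u|^2+\frac{x}{a}(|u|^2)_x$. The first term is controlled by $\left(1+\frac{x|a'|}{a}\right)\frac{|u|^2}{a}\le(1+K_a)\frac{|u|^2}{a}$, integrable since $\int_0^1\frac{|u|^2}{a}<\infty$; the second by Cauchy–Schwarz against $\int_0^1\frac{x^2}{a}|u_x|^2<\infty$. Hence $\frac{x}{a}|u|^2$ has a limit at $0$, and since $\frac{|u|^2}{a}\in L^1$ gives $\liminf\frac{x}{a}|u|^2=0$, the limit is $0$. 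The main obstacle throughout is not producing the limits but upgrading ``the limit exists'' to ``the limit is zero'': this rests on pairing the $L^1$-derivative argument with an $L^1$-weight argument, and, in (1), on the sharp decay $|v|=o(\sqrt{x})$ together with the contradiction step. All of this is underpinned by $K_a<2$, which is precisely what keeps the weight $\frac{x^2}{a}$ bounded near the degeneracy.
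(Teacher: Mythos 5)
There is no internal proof to compare you against: the paper states this lemma and simply cites Lemma 2.4 of \cite{fragnelli2022linear}, so your proposal has to be judged on its own terms. On those terms it is correct in the setting where the paper actually invokes the lemma, and the mechanism is clean and uniform: pass to $w=\eta u_x$ (legitimate because $b/a\in L^1(0,1)$ makes $\eta$ continuous and bounded between positive constants), show that the product under study has derivative in $L^1(0,1)$ so that it admits a limit at $x=0$, and then force that limit to be zero, either by the liminf argument from an integrable weight (items (2)--(5)) or, for item (1), by the decay $|v(x)|=o(\sqrt{x})$ combined with the divergence of $\int_0^\delta dx/x$. A genuine bonus of your route, which you correctly flag, is that it proves (3) and (4) simultaneously: the splitting at $K_a=1$ and the extra hypothesis $xb/a\in L^{\infty}(0,1)$ in (4) are only needed if one differentiates $x|u_x|^2$ directly and has to absorb the drift term coming from $au_{xx}=Bu-bu_x$; working with $w$ makes the drift invisible.

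The one point you should state more carefully is the hypothesis under which items (2) and (5) are proved. As written they assume only Hypothesis \ref{hyp3}, i.e. $x^{\alpha}/a$ non-decreasing near $0$ for some $\alpha>0$, while every one of your estimates for (2)--(5) uses the boundedness of $x^2/a$ near $0$, which you get from $K_a<2$ (equivalently, from hyp3 with $\alpha\le 2$), and which fails for instance when $a(x)=x^3$. This is not a removable blemish of your proof but a defect of the literal statement: for $a(x)=x^3$, $b\equiv 0$ (which satisfies Hypotheses \ref{hyp1} and \ref{hyp3} with $\alpha=3$, and gives $\eta\equiv 1$, $\sigma=a$) one can take $u$ to be a sum of disjoint triangular bumps of height $4^{-n}$ and width $4^{-n}/n^2$ centered at $x_n=4^{-n}$; then $u\in H^1(0,1)$ and $\int_0^1 x^{-3}|u|^2dx<\infty$, so $u\in H^1_{\frac{1}{\sigma}}(0,1)$, yet $\frac{x_n}{a(x_n)}|u(x_n)|^2=1$ for all $n$, so the conclusion of item (5) fails. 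Hence the restriction you use is necessary, and it matches exactly the regime in which the paper applies the lemma (Hypothesis \ref{Condition1}: $a$ \eqref{WD} or \eqref{SD}, $K_a<2$); your closing observation that everything is ``underpinned by $K_a<2$'' is the right diagnosis, but it should be promoted from a remark to a stated assumption in your proofs of (2) and (5).
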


\begin{Lemma}\label{lemmaAppendix}
Under Hypothesis \ref{Condition1},  the solution $(u, v, y, \gamma)^{\top}\in D(\mathcal{A})$ of \eqref{pol3} satisfies the following equation
\begin{equation}\label{p20new}
\begin{array}{l}
\displaystyle\left(1+\frac{K_a}{2}\right)\int_0^1 \frac{1}{\sigma}|\la u|^2dx+\left(1-\frac{K_a}{2}\right)\int_0^1 \eta |u_x|^2dx
=\int_0^1\frac{x}{\sigma}\left(\frac{a^\prime-b}{a}\right)|\la u|^2dx+\int_0^1x\frac{b}{a}\eta|u_x|^2dx
\\ 
\displaystyle
+2\Re\left(\int_0^1 f^2\dfrac{x}{\sigma}\overline{u}_xdx\right)
-2\Re\left(i\int_0^1 \left(\frac{xf^1}{\sigma}\right)_x\la \overline{u}dx\right)-\frac{K_a}{2}\Re\left(i\int_0^1\frac{1}{\sigma}f^1\la \overline{u}dx\right)-\frac{K_a}{2}\Re\left(\int_0^1\frac{1}{\sigma}f^2 \overline{u}dx\right)
\\ 
\displaystyle
-\Re\left(\frac{K_a}{2}\eta(1)u_x(1)\overline{u}(1)\right)+\frac{1}{\sigma(1)}|\la u(1)|^2+\eta(1)|u_x(1)|^2+\Re\left(2i\frac{f^1(1)}{\sigma(1)}\la \overline{u}(1)\right).
\end{array}
\end{equation}
\end{Lemma}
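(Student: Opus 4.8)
The plan is to obtain \eqref{p20new} as the exact Rellich--Pohozaev multiplier identity for the stationary equation associated with \eqref{pol4}--\eqref{pol5}; this is precisely the computation already carried out (in estimated form) in Step~2 of the proof of Lemma~\ref{lemma2}, now retained as an equality. First I would insert \eqref{pol4} into \eqref{pol5} to eliminate $v$, producing the second-order equation \eqref{pol9}, that is $\la^2 u+\sigma(\eta u_x)_x=-(i\la f^1+f^2)$. I would then multiply both sides by the multiplier $-2\frac{x}{\sigma}\overline{u_x}+\frac{K_a}{2\sigma}\overline{u}$, integrate over $(0,1)$, and take the real part. The integrability needed to justify every integration by parts below is guaranteed by item (2) of Lemma~\ref{Lemma0}, which ensures $xu_x(\eta u_x)_x\in L^1(0,1)$.

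The bulk terms arise as follows. For the zeroth-order part, $\Re(-2\la^2\int_0^1\frac{x}{\sigma}u\overline{u_x}dx)=-\la^2\int_0^1\frac{x}{\sigma}(|u|^2)_x dx$, and one integration by parts gives $\la^2\int_0^1(\frac{x}{\sigma})'|u|^2dx$ plus a boundary contribution. Using $\sigma=a/\eta$ together with $\eta'=\eta\,b/a$ (from the definition of $\eta$) one computes $(\frac{x}{\sigma})'=\frac{1}{\sigma}\left(1-x\frac{a'-b}{a}\right)$, which produces the coefficient $1$ in front of $\int_0^1\frac{1}{\sigma}|\la u|^2dx$ and the bulk term $-\int_0^1\frac{x}{\sigma}\frac{a'-b}{a}|\la u|^2dx$. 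For the principal part, $\Re(-2\int_0^1 x\overline{u_x}(\eta u_x)_x dx)$ is integrated by parts twice; using $(x\eta)'=\eta(1+x\frac{b}{a})$ one gets the coefficient $1$ in front of $\int_0^1\eta|u_x|^2dx$ and the term $-\int_0^1 x\frac{b}{a}\eta|u_x|^2dx$. The lower-order half of the multiplier, $\frac{K_a}{2\sigma}\overline{u}$, contributes $\frac{K_a}{2}\int_0^1\frac{1}{\sigma}|\la u|^2dx$ from the zeroth-order part and $-\frac{K_a}{2}\int_0^1\eta|u_x|^2dx$ (plus a boundary term) from $\frac{K_a}{2}\int_0^1(\eta u_x)_x\overline{u}dx$; these supply the remaining $\pm\frac{K_a}{2}$ in the two coefficients $1\pm\frac{K_a}{2}$ on the left-hand side of \eqref{p20new}.

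Next I would treat the boundary contributions. The endpoint $x=1$ produces exactly the terms $\frac{1}{\sigma(1)}|\la u(1)|^2$, $\eta(1)|u_x(1)|^2$ and $-\Re(\frac{K_a}{2}\eta(1)u_x(1)\overline{u}(1))$ appearing in \eqref{p20new}. All contributions at $x=0$ must vanish, and here the degeneracy enters: $\lim_{x\to0}\frac{x}{\sigma}|u|^2=0$ by item (5) of Lemma~\ref{Lemma0}, $\lim_{x\to0}x\eta|u_x|^2=0$ by items (3) and (4) (the case $K_a>1$ using $xb/a\in L^\infty(0,1)$, which holds since $M_2<\infty$ under Hypothesis~\ref{Condition1}), and $\lim_{x\to0}\eta u_x\overline{u}=0$ by item (1). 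The right-hand side of \eqref{pol9} times the multiplier directly yields $2\Re(\int_0^1 f^2\frac{x}{\sigma}\overline{u}_xdx)$, $-\frac{K_a}{2}\Re(i\int_0^1\frac{1}{\sigma}f^1\la\overline{u}dx)$ and $-\frac{K_a}{2}\Re(\int_0^1\frac{1}{\sigma}f^2\overline{u}dx)$; the remaining piece $2\Re(i\la\int_0^1\frac{xf^1}{\sigma}\overline{u_x}dx)$ requires one further integration by parts, turning it into $-2\Re(i\int_0^1(\frac{xf^1}{\sigma})_x\la\overline{u}dx)$ together with the boundary term $\Re(2i\frac{f^1(1)}{\sigma(1)}\la\overline{u}(1))$; its boundary value at $x=0$, namely $\frac{xf^1}{\sigma}\overline{u}$, vanishes after bounding $\frac{x}{\sigma}|f^1||u|\le\frac{x}{2\sigma}(|f^1|^2+|u|^2)$ and applying item (5) to both $f^1$ and $u$. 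Collecting the bulk and boundary terms and moving the drift/degeneracy integrals and the $x=1$ quantities to the right-hand side yields \eqref{p20new} verbatim.

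I expect the main obstacle to be the careful bookkeeping of the boundary terms at the degenerate endpoint $x=0$: each integration by parts generates a boundary value there, and one must invoke the correct vanishing limit from Lemma~\ref{Lemma0} (distinguishing the weakly and strongly degenerate ranges of $K_a$) to discard it. The bulk computation, by contrast, is a routine though lengthy application of the identities $\eta'=\eta b/a$ and $\sigma=a/\eta$, and the passage from $\int_0^1\frac{xf^1}{\sigma}\overline{u_x}dx$ to $\int_0^1(\frac{xf^1}{\sigma})_x\overline{u}dx$ is the only non-obvious manipulation on the source side.
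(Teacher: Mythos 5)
Your proposal is correct and follows essentially the same route as the paper: the paper performs the identical Rellich--Pohozaev computation, merely splitting the multiplier into its two pieces $\frac{-2x}{\sigma}\overline{u}_x$ and $\frac{K_a}{2\sigma}\overline{u}$, deriving the two identities \eqref{p20} and \eqref{p20**} separately, and summing them, while you multiply by the combined multiplier in a single pass. All the key ingredients match: the identities $\left(\frac{x}{\sigma}\right)'=\frac{1}{\sigma}-\frac{x}{\sigma}\left(\frac{a'-b}{a}\right)$ and $(x\eta)'=\eta\left(1+x\frac{b}{a}\right)$, the integration by parts on the $f^1$ source term producing the boundary term $\Re\left(2i\frac{f^1(1)}{\sigma(1)}\la\overline{u}(1)\right)$, and the use of Lemma \ref{Lemma0} to kill all boundary contributions at the degenerate endpoint $x=0$ (your explicit Young-inequality justification of $\lim_{x\to 0}\frac{x f^1}{\sigma}\overline{u}=0$ via item (5) is in fact slightly more detailed than the paper's).
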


\begin{proof}
Multiply \eqref{pol9} by $\displaystyle\frac{K_a}{2\sigma}\overline{u}$,  integrate over $(0,1)$, and take the real part,  we obtain 
\begin{equation}\label{p20**}
\begin{array}{l}\displaystyle
\frac{K_a}{2}\int_0^1\frac{1}{\sigma}|\la u|^2dx-\frac{K_a}{2}\int_0^1\eta| u_x|^2dx=-\Re\left(\frac{K_a}{2}\eta(1)u_x(1)\overline{u}(1)\right)-\frac{K_a}{2}\Re\left(i\int_0^1\frac{1}{\sigma}f^1\la \overline{u}dx\right)\\
\displaystyle
-\frac{K_a}{2}\Re\left(\int_0^1\frac{1}{\sigma}f^2 \overline{u}dx\right).
\end{array}
\end{equation}
Multiplying equation \eqref{pol9} by $\displaystyle\frac{-2x}{\sigma}\overline{u}_x$,  integrating over $(0,1)$,  and taking the real part,  we have
\begin{equation}\label{p21}
\begin{array}{l}
\displaystyle\int_0^1 \left(\frac{x}{\sigma}\right)^{\prime}|\la u|^2dx-\frac{1}{\sigma(1)}|\la u(1)|^2+\lim_{x\to 0}\dfrac{x}{\sigma(x)}|\la u(x)|^2-2\Re\left(\int_0^1 (\eta u_x)_x x \overline{u}_xdx\right)
\\ 
\displaystyle =2\Re\left(\int_0^1 f^2\dfrac{x}{\sigma}\overline{u_x}dx\right)-2\Re\left(i\int_0^1 \left(\frac{x f^1 }{\sigma}\right)_x\la \bar{u}dx\right)+\Re\left(2i\frac{f^1(1)}{\sigma(1)}\la \overline{u}(1)\right)
\\ 
\displaystyle
-2\lim_{x\to 0}\Re\left[i\la f^1(x)\frac{x}{\sigma(x)}\bar{u}(x)\right].
\end{array}
\end{equation}
Clearly, $\displaystyle\left(\frac{x}{\sigma}\right)^{\prime}=\frac{1}{\sigma}-\frac{x}{\sigma}\left(\frac{a^\prime-b}{a}\right)$ and $\eta'=\dfrac{b}{a}\eta$,  
then 
\begin{equation}\label{p22}
 \int_0^1 \left(\frac{x}{\sigma}\right)^{\prime}|\la u|^2dx=\int_0^1 \frac{1}{\sigma}|\la u|^2dx-\int_0^1\frac{x}{\sigma}\left(\frac{a^\prime-b}{a}\right)|\la u|^2dx
\end{equation}
and 
\begin{equation}\label{p23}
\begin{array}{l}
\displaystyle
-2\Re\left(\int_0^1 (\eta u_x)_x x \bar{u}_xdx\right)=2\Re\left(\int_0^1\eta u_x(x \bar{u}_x)_xdx\right)-2\Re\left[x\eta |u_x|^2\right]_0^1=2\int_0^1\eta |u_x|^2dx \\ \displaystyle -\int_0^1 (x\eta )^{\prime}|u_x|^2dx

-\Re\left[x\eta |u_x|^2\right]_0^1=\int_0^1 \eta|u_x|^2dx-\int_0^1 x \frac{b}{a}\eta|u_x|^2dx
\\ \displaystyle
-\eta(1)|u_x(1)|^2+\lim_{x\to 0}x\eta(x) |u_x(x)|^2.
\end{array}
\end{equation}
Using Lemma \ref{Lemma0},  we deduce
\begin{equation}\label{p24}
\lim_{x\to 0}\dfrac{x}{\sigma(x)}|\la u(x)|^2=0, \quad\lim_{x\to 0}x \eta(x) |u_x(x)|^2=0, \quad \text{and}\quad\lim_{x\to 0}\Re\left[i\la f^1(x)\frac{x}{\sigma(x)}\bar{u}(x)\right]=0.
\end{equation}
Thus, using equations \eqref{p22}-\eqref{p24} in \eqref{p21},  we obtain 
\begin{equation}\label{p20}
\begin{array}{l}
\displaystyle\int_0^1 \frac{1}{\sigma}|\la u|^2dx+\int_0^1 \eta |u_x|^2dx
=\int_0^1\frac{x}{\sigma}\left(\frac{a^\prime-b}{a}\right)|\la u|^2dx+\int_0^1x\frac{b}{a}\eta|u_x|^2dx+2\Re\left(\int_0^1 f^2\dfrac{x}{\sigma}\overline{u}_xdx\right)
\\ 
\displaystyle
-2\Re\left(i\int_0^1 \left(\frac{xf^1}{\sigma}\right)_x\la \overline{u}dx\right)+\frac{1}{\sigma(1)}|\la u(1)|^2+\eta(1)|u_x(1)|^2+\Re\left(2i\frac{f^1(1)}{\sigma(1)}\la \overline{u}(1)\right).
\end{array}
\end{equation}
Finally, summing \eqref{p20} and \eqref{p20**}, we obtain \eqref{p20new}.

\end{proof}

\begin{Proposition}\label{prop_1}(See \cite{Zhang-ZAMP}, Proposition 2.2 (ii))
Let $\displaystyle \mathcal{F}(s)=\int_0^s e^{-\xi(s-\tau)}f^4(\tau)d\tau$, where $f^4\in W$ and $\xi\in \mathbb{C}$. Then, for any $\Re(\xi)=0$, there exists $m_1>0$ such that
\begin{equation}
\|\mathcal{F}(s)\|_{W}\leq m_1 \|f^4(s)\|_{W}.
\end{equation}
\end{Proposition}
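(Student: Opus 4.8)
The plan is to reduce this vector‑valued weighted estimate to a scalar energy identity in the variable $s$, exploiting that $\mathcal{F}$ solves a linear first‑order ODE in $s$ and that the $W$‑norm sees only the $x$‑derivative. Since differentiation in $x$ commutes with the $\tau$‑integral, set $G(s):=\mathcal{F}_x(\cdot,s)=\int_0^s e^{-\xi(s-\tau)}f^4_x(\cdot,\tau)\,d\tau\in L^2(1,2)$. Because $f^4\in W$ and, by condition \eqref{H}, $\mu$ is continuous and bounded away from $0$ on every compact subinterval of $(0,\infty)$, the map $\tau\mapsto f^4_x(\cdot,\tau)$ lies in $L^1_{\mathrm{loc}}(\R^+;L^2(1,2))$; hence $G$ is well defined, absolutely continuous with values in $L^2(1,2)$, satisfies $G(0)=0$, and solves $G'(s)+\xi G(s)=f^4_x(\cdot,s)$. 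Moreover $\|\mathcal{F}\|_W^2=cm\int_0^\infty\mu(s)\|G(s)\|_{L^2(1,2)}^2\,ds$, so it suffices to bound $\int_0^\infty\mu(s)\|G(s)\|_{L^2(1,2)}^2\,ds$ by the corresponding quantity for $f^4_x$.

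Using $\Re\xi=0$, the energy identity reads $\frac{d}{ds}\|G(s)\|_{L^2(1,2)}^2=2\Re\langle G'(s),G(s)\rangle_{L^2(1,2)}=2\Re\langle f^4_x(\cdot,s)-\xi G(s),G(s)\rangle_{L^2(1,2)}=2\Re\langle f^4_x(\cdot,s),G(s)\rangle_{L^2(1,2)}$. Following the truncation used in Proposition \ref{m-dissipative}, I would multiply by $\mu(s)$, integrate over $[\varepsilon_1,\varepsilon_2]$ and integrate by parts to obtain
\begin{equation*}
-\int_{\varepsilon_1}^{\varepsilon_2}\mu'(s)\|G\|_{L^2(1,2)}^2\,ds=2\int_{\varepsilon_1}^{\varepsilon_2}\mu(s)\Re\langle f^4_x,G\rangle_{L^2(1,2)}\,ds+\mu(\varepsilon_1)\|G(\varepsilon_1)\|_{L^2(1,2)}^2-\mu(\varepsilon_2)\|G(\varepsilon_2)\|_{L^2(1,2)}^2 .
\end{equation*}
Invoking the Dafermos condition $-\mu'\ge K_\mu\mu$ on the left, discarding the nonpositive boundary term at $\varepsilon_2$, and applying Cauchy--Schwarz (in $s$, with weight $\mu$) on the right yields
\begin{equation*}
K_\mu\int_{\varepsilon_1}^{\varepsilon_2}\mu\|G\|_{L^2(1,2)}^2\,ds\le 2\Big(\int_{\varepsilon_1}^{\varepsilon_2}\mu\|f^4_x\|_{L^2(1,2)}^2\,ds\Big)^{1/2}\Big(\int_{\varepsilon_1}^{\varepsilon_2}\mu\|G\|_{L^2(1,2)}^2\,ds\Big)^{1/2}+\mu(\varepsilon_1)\|G(\varepsilon_1)\|_{L^2(1,2)}^2 .
\end{equation*}

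Finally, I would let $\varepsilon_1\to0^+$ and $\varepsilon_2\to\infty$. Since $G(0)=0$, $G$ is continuous and $\mu(0)<\infty$, the boundary term $\mu(\varepsilon_1)\|G(\varepsilon_1)\|_{L^2(1,2)}^2\to0$. Writing $A=\int_0^\infty\mu\|G\|_{L^2(1,2)}^2$ and $B=\int_0^\infty\mu\|f^4_x\|_{L^2(1,2)}^2=c^{-1}m^{-1}\|f^4\|_W^2$, the displayed inequality becomes $K_\mu A\le 2\sqrt{BA}$, whence $A\le 4K_\mu^{-2}B$ and therefore $\|\mathcal{F}\|_W^2\le \tfrac{4}{K_\mu^2}\|f^4\|_W^2$, i.e.\ the claim holds with $m_1=2/K_\mu$. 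The one genuinely delicate point is the absorption step: since $A$ is not a priori known to be finite, the inequality $K_\mu A\le2\sqrt{BA}$ cannot be divided by $\sqrt{A}$ directly. This is precisely why I carry out the estimate first on the truncated interval $[\varepsilon_1,\varepsilon_2]$, where $A_\varepsilon<\infty$ (by continuity of $G$ and local integrability of $\mu$): solving the resulting quadratic inequality bounds $\sqrt{A_\varepsilon}$ uniformly, and monotone convergence then delivers $A<\infty$ together with the final estimate.
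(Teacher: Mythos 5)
Your proof addresses a statement for which the paper itself offers no argument: Proposition \ref{prop_1} is quoted from \cite{Zhang-ZAMP} (Proposition 2.2(ii)) and used as a black box (in Proposition \ref{m-dissipative} and in Lemma \ref{lem2-exp}), so there is no internal proof to compare against. Your argument is correct and makes the appendix self-contained; in spirit it transplants to the resolvent setting the truncation technique the authors do use in Proposition \ref{m-dissipative}. The individual steps check out: $\partial_x$ commutes with the Bochner integral, so $G=\mathcal{F}_x$ solves $G'+\xi G=f^4_x$ with $G(0)=0$; monotonicity of $\mu$ (a consequence of the Dafermos condition) together with $f^4\in W$ gives $f^4_x\in L^1_{\mathrm{loc}}(\R^+;L^2(1,2))$, so $G$ is locally absolutely continuous; $\Re\xi=0$ kills the $\xi$-term in the energy identity; the integration by parts, the sign of the $\varepsilon_2$-boundary term, the weighted Cauchy--Schwarz step, and the vanishing of $\mu(\varepsilon_1)\|G(\varepsilon_1)\|^2_{L^2(1,2)}$ (using $\mu(0)<\infty$ and $G(0)=0$) are all valid; and you correctly perform the absorption on truncated intervals, where finiteness of $A_\varepsilon$ is known, before passing to the limit. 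This yields the explicit constant $m_1=2/K_\mu$. Two remarks. First, a more direct route, avoiding truncation entirely, is to recognize $\mathcal{F}=(\xi I+\partial_s)^{-1}f^4$ as the Laplace transform of the right-translation semigroup $(S(t)f)(s)=f(s-t)\chi_{\{s>t\}}$ on $W$: integrating the Dafermos condition gives $\mu(s+t)\le e^{-K_\mu t}\mu(s)$, hence $\|S(t)f\|_W\le e^{-K_\mu t/2}\|f\|_W$, and Minkowski's integral inequality gives
\begin{equation*}
\|\mathcal{F}\|_W\le\int_0^\infty\bigl|e^{-\xi t}\bigr|\,\|S(t)f^4\|_W\,dt\le\frac{2}{K_\mu}\,\|f^4\|_W,
\end{equation*}
the same constant, and this also explains why uniformity in $\xi$ with $\Re\xi=0$ is natural. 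Second, your side remark that \eqref{H} makes $\mu$ ``bounded away from $0$ on every compact subinterval of $(0,\infty)$'' is not literally implied by \eqref{H}, since the kernel could vanish identically beyond some finite time; but because $\mu$ is non-increasing, your local-integrability argument works on every interval $[0,T]$ with $\mu(T)>0$, and values of $s$ where $\mu(s)=0$ contribute to neither $\|\mathcal{F}\|_W$ nor $\|f^4\|_W$, so this does not affect the validity of the proof.
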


\begin{Proposition}\label{prop_2}(See \cite{Zhang-ZAMP})
Assume condiftion \eqref{H}. Then, for any $\epsilon>0$ there exists a constant $m_2>0$ such that
$$ \inf_{\la\in\R; |\la|\geq \epsilon>0}\int_0^{\infty}\mu(s)|1-e^{-i\la s}|ds\geq m_2.
$$
\end{Proposition}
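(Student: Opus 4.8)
The plan is to reduce the claim to three elementary facts about a single continuous function obtained by pulling the modulus outside the integral, and then to combine a compactness argument on a bounded annulus with Riemann--Lebesgue control at infinity. First I would introduce the complex-valued function $G(\lambda):=\int_0^{\infty}\mu(s)(1-e^{-i\lambda s})\,ds=g(0)-\hat\mu(\lambda)$, where $\hat\mu(\lambda):=\int_0^{\infty}\mu(s)e^{-i\lambda s}\,ds$, and record the pointwise lower bound $\int_0^{\infty}\mu(s)\,|1-e^{-i\lambda s}|\,ds\ge|G(\lambda)|$ coming from the triangle inequality for integrals. Thus it suffices to bound $|G|$ from below on $\{|\lambda|\ge\epsilon\}$. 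Since $\mu\in L^1(0,\infty)$ and $|1-e^{-i\lambda s}|\le 2$, dominated convergence shows at once that $G$ is continuous on $\R$.

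Next I would establish two properties of $G$. For strict positivity away from the origin, take real parts: $\Re G(\lambda)=\int_0^{\infty}\mu(s)(1-\cos\lambda s)\,ds$ has a nonnegative integrand, so $\Re G(\lambda)=0$ would force $\mu(s)(1-\cos\lambda s)=0$ almost everywhere. Because $\mu\ge 0$ with $\int_0^{\infty}\mu=g(0)>0$ from \eqref{H}, the set $\{\mu>0\}$ has positive measure, whereas $\{s:\cos\lambda s=1\}$ is countable for $\lambda\neq 0$; hence $\Re G(\lambda)>0$ and therefore $|G(\lambda)|\ge\Re G(\lambda)>0$ for every $\lambda\neq 0$. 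For the behavior at infinity, the Riemann--Lebesgue lemma gives $\hat\mu(\lambda)\to 0$ as $|\lambda|\to\infty$, so $G(\lambda)\to g(0)$ and $|G(\lambda)|\to g(0)>0$.

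Finally I would assemble these facts. Given $\epsilon>0$, the Riemann--Lebesgue limit furnishes some $R>\epsilon$ with $|\hat\mu(\lambda)|<g(0)/2$, whence $|G(\lambda)|>g(0)/2$ for $|\lambda|\ge R$. On the compact annulus $\{\epsilon\le|\lambda|\le R\}$ the continuous function $|G|$ attains a minimum, which is strictly positive by the pointwise positivity above; call it $m'$. Setting $m_2:=\min\{m',g(0)/2\}>0$ then yields $\inf_{|\lambda|\ge\epsilon}\int_0^{\infty}\mu(s)\,|1-e^{-i\lambda s}|\,ds\ge\inf_{|\lambda|\ge\epsilon}|G(\lambda)|\ge m_2$, as required.

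The main obstacle is precisely the unboundedness of the region $\{|\lambda|\ge\epsilon\}$: continuity together with pointwise positivity never by themselves produce a uniform positive infimum over an unbounded set, since the infimum could a priori be approached as zero at infinity. The crux is therefore the Riemann--Lebesgue step, which pins the limiting value down to the strictly positive constant $g(0)$ and so excludes this degeneracy; the compactness argument then handles the remaining bounded piece. I note that this reasoning uses only $\mu\in L^1(0,\infty)$, $\mu\ge 0$, and $g(0)>0$ from \eqref{H}, and that the Dafermos condition plays no role here.
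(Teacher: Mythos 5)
Your proof is correct. Note that the paper does not actually prove Proposition \ref{prop_2}: the statement is simply quoted from \cite{Zhang-ZAMP}, so there is no in-paper argument to compare against, and your proposal supplies a complete, self-contained proof where the paper offers only a citation. Your chain of reasoning --- the triangle-inequality reduction to $|G(\lambda)|$ with $G(\lambda)=g(0)-\hat\mu(\lambda)$, continuity of $G$ by dominated convergence, strict positivity of $\Re G(\lambda)=\int_0^\infty\mu(s)(1-\cos\lambda s)\,ds$ for $\lambda\neq 0$ (the set $\{\mu>0\}$ has positive measure while $\{s:\cos\lambda s=1\}$ is countable), the Riemann--Lebesgue limit $G(\lambda)\to g(0)>0$, and the compactness argument on the annulus $\{\epsilon\le|\lambda|\le R\}$ --- is the standard route to such nonvanishing estimates for memory kernels, and every step is rigorous under \eqref{H}. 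Your closing observation is also accurate: only $\mu\ge 0$, $\mu\in L^1(0,\infty)$ and $g(0)>0$ are used, and the Dafermos condition plays no role. One further remark worth recording: in the paper the proposition is actually invoked in Lemma \ref{lem2-exp} for the \emph{squared} quantity $\mathcal{N}=\int_0^{\infty}\mu(s)|1-e^{-i\lambda s}|^2\,ds$, which does not literally match the statement as written; since $|1-e^{-i\lambda s}|^2=2(1-\cos\lambda s)$, one has $\mathcal{N}=2\,\Re G(\lambda)$, and your argument applied verbatim with $|G|$ replaced by $\Re G$ (which is continuous, positive for $\lambda\neq 0$, and tends to $g(0)$ at infinity) yields the uniform lower bound in exactly the form needed there, so your method closes that small gap as well.
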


\section*{\textbf{Conclusion}}
This work aims to examine the stabilization of the transmission problem of degenerate wave equation and heat equation, specifically in relation to the Coleman-Gurtin heat conduction law or Gurtin-Pipkin law with memory effect. In this study, we examine the polynomial stability of the system utilizing the Coleman-Gurtin heat conduction model. We establish that the system shows a decay rate of the kind $t^{-4}$. Afterwards, we prove that, when Gurtin-Pipkin heat conduction is employed, the system remains exponentially stable. Regarding the optimality of the decay rate we conjecture it is optimal since it is complicated to prove it with general consideration of our functions $a$ and $b$. 

\section*{\textbf{Acknowledgment}}
The authors would like to thank the Project Horizon Europe Seeds {\it STEPS: STEerability and 
controllability of PDES in Agricultural and Physical models} and the Project {\it PEPS JCJC-FMHF- FR2037}.\\
Genni Fragnelli is also a member of the  {\it Gruppo Nazionale per l'Analisi Ma\-te\-matica, la Probabilit\'a e le loro Applicazioni (GNAMPA)} of the Istituto Nazionale di Alta Matematica (INdAM) and a member of {\it UMI ``Modellistica Socio-Epidemiologica (MSE)''}. She is partially supported by the GNAMPA project 2023 {\em Modelli differenziali per l'evoluzione del clima e i suoi impatti} and by FFABR {\it Fondo per il finanziamento delle attivit\`a base di ricerca} 2017. 

\section*{\textbf{Declarations}}
\textbf{Competing interest} The authors have not disclosed any competing interests.

%%%%%
%\protect\bibliographystyle{abbrv}
%\protect\bibliographystyle{alpha}\bibliography{references}

\begin{thebibliography}{10}

\bibitem{Akil2022}
M.~Akil.
\newblock Stability of piezoelectric beam with magnetic effect under (coleman
  or pipkin)--gurtin thermal law.
\newblock {\em Zeitschrift f{\"u}r angewandte Mathematik und Physik},
  73(6):236, Oct. 2022.

\bibitem{AkilCPAA}
M.~Akil, H.~Badawi, and A.~Wehbe.
\newblock Stability results of a singular local interaction
  elastic/viscoelastic coupled wave equations with time delay.
\newblock {\em Communications on Pure and Applied Analysis}, 20(9):2991--3028,
  2021.

\bibitem{akil2023stability}
M.~Akil, G.~Fragnelli, and I.~Issa.
\newblock Stability for degenerate wave equations with drift under simultaneous
  degenerate damping, 2023.

\bibitem{Alabau2006}
F.~Alabau-Boussouira, P.~Cannarsa, and G.~Fragnelli.
\newblock Carleman estimates for degenerate parabolic operators with
  applications to null controllability.
\newblock {\em Journal of Evolution Equations}, 6(2):161--204, May 2006.

\bibitem{Cannarsa}
F.~Alabau-Boussouira, P.~Cannarsa, and G.~Leugering.
\newblock Control and stabilization of degenerate wave equations.
\newblock {\em SIAM Journal on Control and Optimization}, 55(3):2052--2087,
  2017.

\bibitem{Arendt-Batty}
W.~Arendt and C.~J.~K. Batty.
\newblock Tauberian theorems and stability of one-parameter semigroups.
\newblock {\em Transactions of the American Mathematical Society},
  306(2):837--852, 1988.

\bibitem{AvalosTriggiani2007}
G.~Avalos and R.~Triggiani.
\newblock Mathematical analysis of pde systems which govern fluid-structure
  interactive phenomena.
\newblock{Bol. Soc. Paran. Mat.}, 25(1-2)(3):17-36.

\bibitem{BattyDavid2016}
C.~Batty, L.~Paunonen, and D.~Seifert.
\newblock Optimal energy decay in a one-dimensional coupled
  wave{\textendash}heat system.
\newblock {\em Journal of Evolution Equations}, 16(3):649--664, Jan. 2016.

\bibitem{Batty01}
C.~J.~K. Batty and T.~Duyckaerts.
\newblock Non-uniform stability for bounded semi-groups on {B}anach spaces.
\newblock {\em J. Evol. Equ.}, 8(4):765--780, 2008.

\bibitem{Biot1941}
M.~A. Biot.
\newblock General theory of three dimensional consolidation.
\newblock {\em Journal of Applied Physics}, 12:155--164, 1941.

\bibitem{Borichev01}
A.~Borichev and Y.~Tomilov.
\newblock Optimal polynomial decay of functions and operator semigroups.
\newblock {\em Math. Ann.}, 347(2):455--478, 2010.

\bibitem{Boutaayamou2020}
I.~Boutaayamou and G.~Fragnelli.
\newblock A degenerate population system: Carleman estimates and
  controllability.
\newblock {\em Nonlinear Analysis}, 195:111742, June 2020.

\bibitem{Boutaayamou2023}
I.~Boutaayamou, G.~Fragnelli, and D.~Mugnai.
\newblock Boundary controllability for a degenerate wave equation in
  nondivergence form with drift.
\newblock {\em {SIAM} Journal on Control and Optimization}, 61(4):1934--1954,
  July 2023.

\bibitem{Cattaneo}
C.~Cattaneo.
\newblock Sulla conduzione del calore.
\newblock{Atti Del Semin.  Matem.  E Fis.  Della Univ.  Modena}, 3:83--101, 1948.


\bibitem{CHENTOUF}
B.~Chentouf, C.~Xu, and G.~Sallet.
\newblock On the stabilization of a vibrating equation.
\newblock {\em Nonlinear Analysis: Theory, Methods \& Applications},
  39(5):537--558, 2000.

\bibitem{Coleman1967}
B.~D. Coleman and M.~E. Gurtin.
\newblock Equipresence and constitutive equations for rigid heat conductors.
\newblock {\em Zeitschrift f\"{u}r angewandte Mathematik und Physik {ZAMP}},
  18(2):199--208, Mar. 1967.

\bibitem{Conrad1993}
F.~Conrad and B.~Rao.
\newblock Decay of solutions of the wave equation in a star-shaped domain with
  nonlinear boundary feedback.
\newblock {\em Asymptotic Analysis}, 7:159--177, 1993.

\bibitem{dAndraNovel1994}
B.~d'Andr{\'e}a Novel, F.~Boustany, F.~Conrad, and B.~P. Rao.
\newblock Feedback stabilization of a hybrid pde-ode system: Application to an
  overhead crane.
\newblock {\em Mathematics of Control, Signals and Systems}, 7:1--22, 1994.

\bibitem{Filippo2023DCDS}
F.~Dell'Oro and V.~Pata.
\newblock A hierarchy of heat conduction laws.
\newblock {\em Discrete and Continuous Dynamical Systems - S},
  16(10):2636--2648.

\bibitem{Filipo2023}
F.~Dell'Oro, L.~Paunonen, and D.~Seifert.
\newblock Optimal decay for a wave-heat system with Coleman-Gurtin thermal
  law.
\newblock {\em Journal of Mathematical Analysis and Applications},
  518(2):126706, 2023.

\bibitem{DellOro2020}
F.~Dell'Oro.
\newblock On the stability of Bresse and Timoshenko systems with hyperbolic
  heat conduction.
\newblock {\em Journal of Differential Equations}, 281:148--198, 2020.

\bibitem{Pata2010}
M.~Fabrizio, C.~Giorgi, and V.~Pata.
\newblock A new approach to equations with memory.
\newblock {\em Archive for Rational Mechanics and Analysis}, 198:189--232, 02
  2010.

\bibitem{Feller}
W.~Feller.
\newblock The parabolic differential equations and the associated semi-groups
  of transformations.
\newblock {\em Annals of Mathematics}, 55(3):468--519, 1952.

\bibitem{Fragnelli+2020}
G.~Fragnelli.
\newblock Null controllability for a degenerate population model in divergence
  form via carleman estimates.
\newblock {\em Advances in Nonlinear Analysis}, 9(1):1102--1129, 2020.

\bibitem{Fragnelli2016}
G.~Fragnelli and D.~Mugnai.
\newblock Carleman estimates for singular parabolic equations with interior
  degeneracy and non-smooth coefficients.
\newblock {\em Advances in Nonlinear Analysis}, 6(1):61--84, Feb. 2016.

\bibitem{Fragenllibook}
G.~Fragnelli and D.~Mugnai.
\newblock {\em Control of degenerate and singular parabolic equations. Carleman
  Estimates and Observability}.
\newblock BCAM SpringerBrief, 2021.

\bibitem{fragnelli2022linear}
G.~Fragnelli and D.~Mugnai.
\newblock Linear stabilization for a degenerate wave equation in non divergence
  form with drift, 2022.

\bibitem{Pata2001}
C.~Giorgi and V.~Pata.
\newblock Asymptotic behavior of a nonlinear hyperbolic heat equation with
  memory.
\newblock {\em Nonlinear Differential Equations and Applications NoDEA},
  8:157--171, 06 2001.

\bibitem{EdwardNaghdi1991}
A.~E. Green and P.~M. Naghdi.
\newblock A re-examination of the basic postulates of thermomechanics.
\newblock {\em Proceedings of the Royal Society of London. Series A:
  Mathematical and Physical Sciences}, 432(1885):171--194, 1991.

\bibitem{Gueye}
M.~Gueye.
\newblock Exact boundary controllability of 1-d parabolic and hyperbolic
  degenerate equations.
\newblock {\em SIAM Journal on Control and Optimization}, 52(4):2037--2054,
  2014.

\bibitem{Gurtin1968}
M.~E. Gurtin and A.~C. Pipkin.
\newblock A general theory of heat conduction with finite wave speeds.
\newblock {\em Archive for Rational Mechanics and Analysis}, 31(2):113--126,
  Jan. 1968.

\bibitem{DiegoYury2022}
D.~Gutirrez-Oribio, Y.~Orlov, I.~Stefanou, and F.~Plestan.
\newblock Advances in sliding mode control of earthquakes via boundary tracking
  of wave and diffusion pdes.
\newblock In {\em 2022 16th International Workshop on Variable Structure
  Systems (VSS)}, pages 231--236, 2022.

\bibitem{Hagan}
P.~S. Hagan and D.~E. Woodward.
\newblock Equivalent black volatilities.
\newblock {\em Applied Mathematical Finance}, 6(3):147--157, 1999.

\bibitem{ZhongjieJDE}
Z.-J. Han, H.-Q. Song, and K.~Yu.
\newblock Sharp decay rates of degenerate hyperbolic-parabolic coupled system:
  Rectangular domain vs one-dimensional domain.
\newblock {\em Journal of Differential Equations}, 349:53--82, 2023.

\bibitem{ZhongZuazua2017}
Z.-J. Han and E.~Zuazua.
\newblock Decay rates for elastic-thermoelastic star-shaped networks.
\newblock {\em Networks and Heterogeneous Media}, 12(3):461--488, 2017.

\bibitem{Huang01}
F.~L. Huang.
\newblock Characteristic conditions for exponential stability of linear
  dynamical systems in {H}ilbert spaces.
\newblock {\em Ann. Differential Equations}, 1(1):43--56, 1985.

\bibitem{Karachalios2005}
N.~I. Karachalios and N.~B. Zographopoulos.
\newblock On the dynamics of a degenerate parabolic equation: global
  bifurcation of stationary states and convergence.
\newblock {\em Calculus of Variations and Partial Differential Equations},
  25(3):361--393, Dec. 2005.

\bibitem{Kato01}
T.~Kato.
\newblock {\em Perturbation Theory for Linear Operators}.
\newblock Springer Berlin Heidelberg, 1995.

\bibitem{RaoLiu01}
Z.~Liu and B.~Rao.
\newblock Characterization of polynomial decay rate for the solution of linear
  evolution equation.
\newblock {\em Z. Angew. Math. Phys.}, 56(4):630--644, 2005.

\bibitem{LiuZheng01}
Z.~Liu and S.~Zheng.
\newblock {\em Semigroups associated with dissipative systems}, volume 398 of
  {\em Chapman \& Hall/CRC Research Notes in Mathematics}.
\newblock Chapman \& Hall/CRC, Boca Raton, FL, 1999.

\bibitem{David}
A.~C. Ng and D.~Seifert.
\newblock Optimal energy decay in a one-dimensional wave-heat system with
  infinite heat part.
\newblock {\em Journal of Mathematical Analysis and Applications},
  482(2):123563, 2020.

\bibitem{Pazy01}
A.~Pazy.
\newblock {\em Semigroups of linear operators and applications to partial
  differential equations}, volume~44 of {\em Applied Mathematical Sciences}.
\newblock Springer-Verlag, New York, 1983.

\bibitem{pruss01}
J.~Pr\"uss.
\newblock On the spectrum of {$C_{0}$}-semigroups.
\newblock {\em Trans. Amer. Math. Soc.}, 284(2):847--857, 1984.

\bibitem{Zhang2015}
M.~Zhang and H.~Gao.
\newblock Null controllability of some degenerate wave equations.
\newblock {\em Journal of Systems Science and Complexity}, 30:1027--1041, 2015.

\bibitem{Zhang-ZAMP}
Q.~Zhang.
\newblock Stability analysis of an interactive system of wave equation and heat
  equation with memory.
\newblock {\em Zeitschrift f\"{u}r angewandte Mathematik und Physik},
  65(5):905--923, Sept. 2013.

\bibitem{ZhangZuazua2004}
X.~Zhang and E.~Zuazua.
\newblock Polynomial decay and control of a 1-d hyperbolic-parabolic
  coupled system.
\newblock {\em Journal of Differential Equations}, 204(2):380--438, 2004.

\bibitem{ZhangZuazua2006}
X.~Zhang and E.~Zuazua.
\newblock Long-time behavior of a coupled heat-wave system arising in
  fluid-structure interaction.
\newblock {\em Archive for Rational Mechanics and Analysis}, 184(1):49--120,
  July 2006.

\end{thebibliography}

\end{document}